\documentclass[a4paper,11pt]{article}
\usepackage{amsfonts, epsfig, amsmath, amssymb, color, amsthm, mathabx}
\usepackage{mathrsfs}
\usepackage{mathtools}
\usepackage{hyperref}
\usepackage[english]{babel}

\usepackage{xcolor}
\usepackage{bbm}

\makeatletter
\@addtoreset{equation}{section}
\makeatother
\newcommand{\be}{\begin{equation}}
\newcommand{\ee}{\end{equation}}
\newcommand{\bel}{\begin{equation}\label}

\newcommand{\ba}{\begin{equation}\begin{aligned}}
\newcommand{\ea}{\end{aligned}\end{equation}}

\newcommand{\mul}{\begin{multline}}
\newcommand{\emul}{\end{multline}}

\DeclareMathOperator{\1}{\mathbbm{1}}

\let\oldmarginpar\marginpar
\renewcommand{\marginpar}[1]{\oldmarginpar{\scriptsize\texttt{\color{red}{#1}}}}

\newcommand{\vf}{\varphi}
\newcommand{\ve}{\varepsilon}

\newcommand{\cF}{{\mathcal F}}
\newcommand{\cA}{{\mathcal A}}
\newcommand{\cC}{{\mathcal C}}

\newcommand{\wt}{\widetilde}

\newcommand{\mbN}{{\mathbb N}}

\newcommand{\mbR}{{\mathbb R}}

\newcommand{\Pb}{{\mathop{\rm P}}}

\newcommand{\limn}{\lim_{n\to\infty}}
\newcommand{\lime}{\lim_{\ve\to0+}}

\newcommand{\di}{\mathrm{d}}

\newcommand{\E}{{\mathop{\rm E}}}

\theoremstyle{plain}
\newtheorem{thm}{Theorem}[section]
\newtheorem{lem}[thm]{Lemma}

\newtheorem{corl}[thm]{Corollary}

\theoremstyle{definition}
\newtheorem{defn}[thm]{Definition}

\theoremstyle{remark}
\newtheorem{remk}[thm]{Remark}

\begin{document}
\title{Limit theorems for one-dimensional diffusions:  reflected, skew, sticky, skew-sticky, and snapping-out limits}

\author{
Andrey Pilipenko
\thanks{Section of Mathematics, University of Geneva, Switzerland}
\thanks{Institute of Mathematics of the National Academy of Sciences of Ukraine, Kyiv, Ukraine}
\and
Gilles Vilmart\footnotemark[1]
}

\date{}
\maketitle
 \vspace{-1em}
 
 \begin{abstract}
We establish general conditions for the weak convergence of regular one-dimensional diffusion processes when their scale functions, speed measures, state spaces and boundary conditions may substantially vary. 
 Our approach is based on the classical Itô--McKean representation of regular diffusions via time changes and scale transformations of a single Brownian motion. This reduces many stochastic convergence problems to deterministic questions concerning the convergence of integrals, monotone functions, compositions, and generalized inverses. 
 
 As a main application, we investigate various homogenization regimes for diffusions in media with closely spaced semipermeable membranes, where the resulting limiting processes include not only reflected and skew Brownian motions, but also skew-sticky Brownian motions and snapping-out diffusions. This yields a representation of a snapping-out diffusion as a function of a single Brownian motion, without introducing an auxiliary sequence of independent exponential random variables.
\end{abstract}

\noindent\textbf{Keywords:}
one-dimensional diffusions; weak convergence; scale functions and speed measures;
semipermeable membranes; skew and sticky Brownian motion; snapping-out diffusions. 

\noindent\textbf{2020 Mathematics Subject Classification:}
Primary: 60F17, 60J60;
Secondary: 60J50, 60J55, 60J65.

\section{Introduction}

Limit theorems for Markov processes represent a classical and thoroughly studied domain of stochastic analysis. When the coefficients of   stochastic differential equations (SDEs) are sufficiently regular—for instance, smooth and bounded—the continuous dependence of solutions on these coefficients is well-established. 

Standard intuition may fail for diffusion processes in media containing semi-permeable membranes or sharp interfaces.
To illustrate the subtleties that arise, let us consider two motivating examples. 
Suppose that, for each $n \geq 1$, the process $X_n$ 
 is a one-dimensional diffusion in a medium with several semi-permeable membranes and can be formally described as a solution to the following SDE: 
\begin{equation}\label{eq:SDE_intro_membranes}
\mathrm{d} X_n(t) = \mathrm{d} W(t) + \sum_{k} \gamma_{n,k} \, \mathrm{d} L^{X_n}_{\varepsilon_{n,k}}(t),
\end{equation}
where $W$ is a standard Brownian motion, $\varepsilon_{n,k}$ are the positions of the membranes, $\gamma_{n,k} \in (-1, 1)$ are the corresponding permeability parameters, and $L^{X_n}_y(t)$ denotes   the symmetric local time of $X_n$ at  $y$.

Assume that the membranes approach the origin as $n \to \infty$, that  is,   $\lim_{n\to\infty} \varepsilon_{n,k} = 0$ for all $k$. A natural  question arises: what is the weak limit of the sequence $X_n$? Furthermore, does this limit depend on the ratios of the distances between the membranes, their relative spatial ordering, or the structure of the permeability coefficients?
 
In a heuristic sense, the local time at a given point can be interpreted as the integral of the Dirac  delta function evaluated along the trajectory of the process. Consequently, if a cluster of membranes collapses to a single point (the origin), it is natural to expect the limiting process to be a skew Brownian motion satisfying the SDE
\begin{equation}\label{eq:SDE_intro_skewBM}
\mathrm{d} X_0(t) = \mathrm{d} W(t) + \gamma \, \mathrm{d} L^{X_0}_0(t),
\end{equation}
where $\gamma$ is the effective skewness parameter.
However, a naive linear intuition regarding the summation of individual drift intensities is incorrect. It is well known that \eqref{eq:SDE_intro_skewBM} has no solution if $|\gamma|> 1$; see \cite{Harrison+Shepp}.  As demonstrated in \S \ref{expl: two membranes skew BM}, the actual relation defining the effective parameter $\gamma$ is nonlinear, but is independent of the spatial configuration of the layer.

Consider another classical example originating from the seminal work of Portenko~\cite{portenko1976generalized, Portenko90}. Instead of membranes, suppose that the singular drift  $\alpha \delta_0$ is regularized  by a sequence of regular drifts $a_n(x)=n \, a(nx),$ where $a\in L_1(\mbR)$ and $\alpha=\int_\mbR a(x) \di x$.
It was established by Portenko  in a more general setting 
(see  also \S \ref{expl:Portenko}),  that, as $ n \to \infty$, the corresponding  diffusions satisfying  $ \di X_n(t) = a_n(X_n(t)) \di t +\di W(t)$ converge   weakly to a skew Brownian motion, but with the effective parameter given by $\gamma = \tanh(\alpha)$.
This non-linear transformation clearly illustrates   difficulties that may arise while considering singular diffusions.

Stochastic methods for studying questions of existence, uniqueness, and convergence of solutions to one-dimensional SDEs involving local time have been developed in great generality; see, e.g., \cite{LeGall83, EngelbertSchmidt1985, makhno2016onedim-homogen, hairer2010periodic}. The connection between SDEs involving local time and parabolic equations
with transmission conditions is studied in detail in
\cite{etoreMartinez2018}.
 For regular diffusions on open intervals, necessary and sufficient conditions for convergence were established by  Freidlin and Wentzell~\cite{freidlin-wentzell_1994Necessary_1dim} in terms of   scale functions and speed measures. Notably, Freidlin and Wentzell allowed the state spaces of the pre-limiting and limiting processes to be different.  The approach in \cite{freidlin-wentzell_1994Necessary_1dim}   relies on the analysis of the resolvents of the processes, for which explicit formulas can be derived via solutions of certain Sturm--Liouville-type problems. It was extended in  \cite{kim2009continuous}  to diffusions on a fixed half-line or a fixed bounded interval, incorporating boundary conditions into the convergence analysis. More recently, Criens~\cite{criens2023relation} studied weak convergence of regular diffusions on natural scale on a fixed interval in terms of convergence of their speed measures, including general boundary behavior.
 In contrast to the setting of \cite{kim2009continuous, criens2023relation}, our state spaces themselves need not be fixed; for example, the pre-limit diffusions may live on the whole real line, whereas the limiting diffusion lives on a half-line. Thus, a reflecting or absorbing boundary of the limiting process need not correspond to any boundary of the approximating processes.

In this paper, we use the classical Itô--McKean representation of one-dimensional diffusions as time-changed Brownian motions. This representation was successfully employed by Stone in   \cite{stone1963limit}   and later by Brooks and Chacon \cite{brooks1982weak} in their study of weak convergence of diffusions, scale functions, speed measures and time changes. 
   Namely, a one-dimensional diffusion $X$ can be represented as $X(t) = s^{-1}(B(A^{-1}(t)))$, where $s$ is the scale function of $X$ and $A(t)$ is a continuous additive functional of a Brownian motion $B$ expressed as an integral of the local time of $B$ with respect to the speed measure of the diffusion $X$. By embedding the entire sequence of diffusions into a single probability space via the representation $X_n(t) = s_n^{-1}(B(A_n^{-1}(t)))$, driven by the same underlying Brownian motion $B$, we are able to establish almost sure convergence of $(X_n)$ to $X$ provided that their scale functions $(s_n)$ and speed measures $(m_n)$ converge in an appropriate sense to the scale function and speed measure of $X$. 
  Consequently, for each fixed   $\omega$, the stochastic problem is largely reduced to a series of deterministic analytical problems  concerning the convergence of (generalized) inverse functions $(A_n^{-1})$, integrals of the Brownian local time against varying speed measures, and compositions of converging mappings. This pathwise framework relies only on the fundamental sample-path properties of Brownian local time (such as joint continuity and   support properties) and avoids complex analytic machinery, thereby substantially simplifying the proofs.  
 The same framework can also be applied in singular regimes where the state space and the boundary behavior change in the limit. Such extensions, however, require additional arguments.
For example, in the presence of reflecting boundaries, the additive functionals may have flat intervals, so that their generalized inverses have jumps. Consequently, convergence of the additive functionals does not by itself imply convergence after the corresponding time changes.
 
A separate part of the paper is devoted to   snapping-out diffusions. These processes   are natural generalizations of the snapping-out Brownian motion (SNOB), first introduced and studied by analytic methods in \cite{bobrowski2012pde, lejay2016snapping}    and   by stochastic methods in \cite{mandrekar2016brownian}; see also the recent paper \cite{erhard2025general}. A snapping-out diffusion has a hard membrane, located, for instance, at 0. While the process is on the positive or negative side of the membrane, it evolves as a diffusion \(X_+\) or \(X_-\), respectively, with reflection   at 0 into the corresponding half-line  (possibly with sticky reflection), until its local time at \(0\) reaches an exponentially distributed random level, at which point it switches to the other side.

We prove that snapping-out diffusions arise as limits of regular diffusions on the real line under simple conditions formulated in terms of scale functions and speed measures. Our results cover  particular cases considered in \cite{lejay2016snapping, mandrekar2016brownian}. We also obtain a representation of a snapping-out diffusion in terms of a single Brownian motion, without introducing an auxiliary sequence of independent exponentially distributed random variables.  This representation is particularly convenient for proving limit theorems. For example, it allows us to obtain skew Brownian motion as a limit of snapping-out Brownian motions with increasing snapping intensities.

Our methods may also be extended to diffusion processes on graphs by representing them as space-time transformations of Walsh processes; see \cite{anagnostakis2025general}. Related analytic approaches to homogenization problems for diffusions on graphs can be found in \cite{ bobrowski2026portenko, bobrowski2025snapping, bobrowski_ratajczyk_pilipenko2026}; see also the recent paper \cite{koralov2026criterionconvergencediffusionprocesses}, which combines analytic methods with martingale-problem techniques.

 In multidimensional settings, the homogenization of diffusions in periodic or layered media is a nontrivial problem that has been addressed in specific cases using various methods; see, e.g.,  \cite{bensoussan2011asymptotic, chechkin2007homogenization, jikov2012homogenization, hairer2008homogenization,  pavliotis2008multiscale, pardoux1999homogenization}. 
    Naturally, pathwise one-dimensional methods cannot be directly extended to higher dimensions.  Nevertheless, the explicit formulas and structural mechanisms established in our work may provide useful intuition for the effective boundary and interface behavior that can arise in higher-dimensional models; see   \cite{ aryasova2025hard, aryasova2024homogenization, aryasova2025limit, bressloff2023renewal, bobrowski2020modeling, bobrowski2025existence, schummBressloff2023}. Related asymptotic analyses of coupling conditions across thin fractures
can be found in \cite{gander2021modeling}.

This paper is organized as follows. In Section \ref{sec:Intro}, we recall  (a) the construction of a one-dimensional regular diffusion via space-time transformation of a   Brownian motion, (b) the characterization of its infinitesimal generator, stochastic interpretation of parabolic PDEs with  boundary conditions, and (c)  several classic examples, such as reflected and skew Brownian motions, sticky diffusions, among others. 
Section \ref{sec:Main results} is devoted to the formulation and proofs of our main convergence theorems. Subsection \ref{sec:open}  is introductory in nature and includes  warm-up calculations. In particular, we consider models where the limiting   diffusion has natural boundary  or  models where we investigate diffusions upon the exit from an interval. The results are close to \cite{stone1963limit, brooks1982weak} and show the main approach and difficulties that may arise.  Results where the limiting diffusion has absorbing or reflecting boundary are studied in \S \ref{sec:absorbing} and \ref{sec:reflecting}. 
We  introduce  a snapping   out diffusion 
in \S \ref{sec:SNOD}. In   Theorem \ref{thm:limit_SNOdiffosion}, we obtain the snapping-out diffusion as a limit of regular diffusions on the real line. A representation of a snapping-out diffusion in terms of a single Brownian motion is given in Theorem \ref{thm:SNO_representation}.  

In Section \ref{sec:expl of application}, we discuss various  applications of our results to homogenization problems. Here we show that numerous limiting results, some of which were proved by more involved methods, follow  as corollaries of our unified approach. We illustrate how sticky, reflecting or snapping-out behavior  may appear  in  limits     of diffusions with  drifts  approximating a Dirac delta, as well as  in   limits of diffusions with several interacting interfaces separated by  small distances. The examples also show qualitative transitions between different limiting regimes depending on the relative scaling of the membrane distances and permeability parameters.

Finally, auxiliary deterministic results concerning the convergence of monotone functions and generalized inverse functions, as well as the vague convergence of measures, are collected in the Appendix.

\section{Construction of One-Dimensional Diffusions}\label{sec:Intro}
\subsection{Space-time  transformation of a Brownian motion}

Let $(X, \Pb_x, x\in I)$ be a continuous one-dimensional strong Markov process taking values in an interval $I\subset \mbR$, which may be bounded or unbounded, and may or may not contain its endpoints. We will refer to $X$ as a diffusion on $I$. We assume throughout this paper that $X$ is an honest process, i.e.,
\[
\Pb_x\big(X(t)\in I \text{ for all } t\geq 0\big) = 1, \quad x\in I,
\] 
where $\Pb_x$ denotes the law of the process $X$ started from $x\in I$.

Introduce the stopping times $\sigma_y:=\inf\{t > 0 : X(t)=y\}$, $y\in I$, and assume that $X$ is regular, meaning that
\begin{equation}
    \label{eq:def_regular}
    \Pb_x(\sigma_y<\infty)>0 \quad \text{for all } x\in \mathrm{int}(I), \, y\in I.
\end{equation}
For example, if $I=[\alpha, \beta)$, then any point in $[\alpha, \beta)$ is accessible in finite time with positive probability from any initial point $x\in(\alpha,\beta)$. However, we do not assume that $\Pb_\alpha(\sigma_y<\infty)>0$ for $y\in(\alpha,\beta)$; the process $X$ may be absorbed  at $\alpha$.

It is well known (see, e.g., \cite{rogers-williams-vol2, Revuz+Yor}) that there exists a strictly increasing continuous function $s:I\to\mbR$, called the scale function, such that for all $y_1<x<y_2$ in $I$,
\ba\label{eq:scale}
\Pb_x(\sigma_{y_1}<\sigma_{y_2}) = 1-\Pb_x(\sigma_{y_1}>\sigma_{y_2}) = \frac{s(y_2)-s(x)}{s(y_2)-s(y_1)}.  
\ea

It is clear that the process $\wt X(t)=s(X(t))$, $t\geq0$, is also a regular diffusion on $s(I)$, and $\wt s(x)=x$ is its scale function. Diffusions whose scale functions are affine, i.e., $s(x)=ax+b$ with $a>0,$ are called \textit{diffusions in   natural scale}.  

The next fundamental result states that any regular diffusion is a  space-time  transformation of a Brownian motion (see \cite[pp.~277--279]{rogers-williams-vol2} and \cite[Theorem~20.9, p.~379]{Kallenberg}). Recall that the symmetric local time of a Brownian motion $B$ is defined by
\begin{equation}
    \label{eq:int_LT}
    l_B(t,y)=\lime\frac{1}{2\ve}\int_0^t\1_{\{|B(z)-y|<\ve\}} \di z, \quad   \text{a.s.}
\end{equation}
Moreover, there exists a modification of $l_B(t,y)$ that is continuous in $(t,y)$, and we use this modification throughout the paper. It is well known that $l_B$ is the density of the occupation measure of the Brownian motion, satisfying the occupation times formula
\begin{equation}
\label{eq:loctimeint}
    \int_0^t f(B(z)) \di z = \int_\mbR f(y) \, l_B(t,y) \di y, \quad t\geq 0, \ \text{a.s.}    
\end{equation}
for any bounded measurable function $f$.

\begin{thm}\label{thm:representation_diff_natural}
Let $\wt X$ be a regular diffusion in   natural scale on $\wt I,$ started from $\wt x \in \wt I$. Then there exist a measure $\wt m$ on the Borel $\sigma$-algebra of $\wt I$ satisfying
\begin{equation}
    \label{eq:speed_m_natural}
    0 < \wt m([\wt y_1, \wt y_2]) < \infty \quad \text{for all } \wt y_1, \wt y_2 \in \mathrm{int}(\wt I) \text{ with } \wt y_1 < \wt y_2,
\end{equation}
and a Brownian motion $B$ started from $\wt x$ and defined on some extension of the probability space $(\Omega,\cF,\Pb_{\wt x})$, such that $\wt X$ has the representation
\begin{equation}
    \label{eq:reg_diff_natural}
    \wt X(t) = B(A^{-1}(t)), \quad t \geq 0, \ \text{a.s.,}
\end{equation}
where
\begin{equation}
    \label{eq:reg_diff_CAF_natural}
    A(t) := \int_{\wt I} l_B(t,\wt y) \, \wt m (\di \wt y) \quad \text{and} \quad A^{-1}(t) := \inf\{s \geq 0 : A(s) > t\}.
\end{equation}
\end{thm}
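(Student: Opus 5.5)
The plan is to reverse the representation: show that $\wt X$ is a time-changed Brownian motion, recover $B$ from $\wt X$, and then identify the time change through the occupation-density structure of $\wt X$.

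\emph{Step 1: the martingale property.} Since $\wt X$ is in natural scale, \eqref{eq:scale} with $s(x)=x$ says that for $\wt y_1<\wt x<\wt y_2$ in $\mathrm{int}(\wt I)$ the stopped process $\wt X(\cdot\wedge\sigma_{\wt y_1}\wedge\sigma_{\wt y_2})$ exits affinely. Combining this with the strong Markov property, the stopped process is a bounded continuous martingale. Localizing along an exhausting sequence of compact subintervals shows that $\wt X$, stopped at the first hitting time $\zeta$ of a non-interior boundary point, is a continuous local martingale. We would treat boundary points that are reached and then left (reflection) separately.

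\emph{Step 2: the embedding via Dambis--Dubins--Schwarz.} Apply the Dambis--Dubins--Schwarz theorem to $M=\wt X-\wt x$. On an extension of $(\Omega,\cF,\Pb_{\wt x})$ there is a Brownian motion $B$ with $B(0)=\wt x$ and $\wt X(t)=B(\langle M\rangle_t)$. The extension is needed to continue $B$ after $\langle M\rangle_\infty$ when that quantity is finite, for instance under absorption.

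\emph{Step 3: identifying the time change.} Set $T(t)=\langle M\rangle_t$. We must show that $T=A^{-1}$ for the additive functional $A$ in \eqref{eq:reg_diff_CAF_natural}, that is, $t=\int l_B(T(t),y)\,\wt m(\di y)$ for some measure $\wt m$.
\begin{itemize}
\item The inverse $T^{-1}(u)=\inf\{t: T(t)>u\}$ is a continuous additive functional of $B$, because the strong Markov property of $\wt X$ transfers to $B$ under the inverse time change.
\item By the Revuz/Volkonskii correspondence for Brownian motion, every such functional has the form $\int l_B(u,y)\,\mu(\di y)$ for a Radon measure $\mu$ on $\wt I$. We take $\wt m:=\mu$.
\item Alternatively, $\wt m$ can be constructed directly from expected exit times: $\E_x\sigma_{(a,b)}=\int G_{a,b}(x,y)\,\wt m(\di y)$ with the Green function $G_{a,b}(x,y)=\frac{(x\wedge y-a)(b-x\vee y)}{b-a}$. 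Because $x\mapsto \E_x\sigma_{(a,b)}$ is concave, $\wt m$ is the measure $-\tfrac12 (\E_\cdot\sigma_{(a,b)})''$. One then checks that $A$ and $T^{-1}$ have the same expected exit times on every interval, and by strong Markov this forces them to agree.
\end{itemize}

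\emph{Step 4: the bounds \eqref{eq:speed_m_natural}.} Positivity follows from regularity: if $\wt m$ vanished on an interval, then $A$ would be flat while $B$ crosses it, and $\wt X$ would jump, contradicting continuity. Finiteness follows because exit times from compact subintervals of the interior are a.s. finite, with finite expectation by the standard geometric-trials argument using regularity.

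The main obstacle is Step 3, in particular the behaviour at the boundary. At a reflecting endpoint $\wt m$ may carry an atom (the sticky case) and $\wt X$ is no longer a local martingale across the endpoint. There we would instead use the Tanaka representation of reflected motion, or place the endpoint's mass into $\wt m$ and verify the formula up to each hitting time, splicing excursions together with the strong Markov property. Since this is a classical theorem, in the write-up we would mostly cite \cite{rogers-williams-vol2, Kallenberg} for these details.
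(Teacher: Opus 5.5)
\paragraph{Identification of the clock.}
The paper gives no proof of its own here; it cites Rogers--Williams and Kallenberg. Your route (natural scale $\Rightarrow$ local martingale $\Rightarrow$ Dambis--Dubins--Schwarz $\Rightarrow$ identify the clock with $A^{-1}$ through the Green-function speed measure) is the classical one. As written, however, the identification step has a gap.

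Your Revuz/Volkonskii bullet presupposes that $T^{-1}$ is an additive functional \emph{of $B$}, i.e.\ adapted to the natural filtration of $B$ and compatible with its shifts. DDS only makes $B$ a Brownian motion in the larger filtration $\mathcal G_u=\cF_{T^{-1}(u)}$, and that filtration contains the clock itself. The strong Markov property of $\wt X$ shows that the pair $(B,T^{-1})$ is Markov-additive with respect to $\mathcal G$. It does not show that $T^{-1}$ is a functional of the path of $B$, and that is exactly what has to be proved.

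Your exit-time alternative does close the gap, but ``strong Markov forces them to agree'' has to be made into the following argument. For $[a,b]\subset\mathrm{int}(\wt I)$, let $\rho$ be the exit time of $B$ from $(a,b)$, so that $T^{-1}(\rho)=\tau_{(a,b)}(\wt X)$, and put $\phi(y):=\E_y\tau_{(a,b)}(\wt X)$.
\begin{itemize}
\item $A(u\wedge\rho)+\phi(B(u\wedge\rho))$ is a $\mathcal G$-martingale, because $B$ is a $\mathcal G$-Brownian motion and $\phi(y)=\E_yA(\rho)$.
\item $T^{-1}(u\wedge\rho)+\phi(B(u\wedge\rho))$ is a $\mathcal G$-martingale, by the strong Markov property of $\wt X$ at the stopping time $T^{-1}(u)$.
\item Hence $A-T^{-1}$ is a continuous finite-variation martingale on $[0,\rho]$, so it vanishes.
\end{itemize}
The identity $\phi=\E_\cdot A(\rho)$ needs the right constant. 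With the paper's normalization of local time, $\E_y l_B(\rho,z)=2G_{a,b}(y,z)$, so the formula must read $\E_x\tau_{(a,b)}=2\int G_{a,b}(x,y)\,\wt m(\di y)$. This matches your $\wt m=-\frac12\phi''$, but not your displayed formula without the factor $2$. You must also check that $-\frac12\phi''$ does not depend on $(a,b)$; this follows from the strong Markov property and the affine exit probabilities.

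\paragraph{Reflecting and sticky endpoints.}
The second gap is the one you call the main obstacle, and it is a missing idea rather than a detail. At a reflecting (possibly sticky) endpoint $\alpha\in\wt I$, $\wt X$ is not a local martingale. More importantly, the required $B$ is not a functional of $\wt X$ at all. $B$ must make excursions outside $\wt I$ that the time change $A^{-1}$ erases, and it makes infinitely many of them immediately after each visit to $\alpha$.

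``Verifying the formula up to each hitting time and splicing'' cannot work. The visits to $\alpha$ form a perfect set with no successive hitting times, and splicing pieces of $\wt X$ never produces the missing excursions. The extension has to carry additional independent randomness. For instance:
\begin{itemize}
\item obtain a reflected Brownian motion $R$ with $\wt X=R\circ T$, via the semimartingale (Skorokhod) decomposition of $\wt X$ and DDS for its martingale part;
\item build $B$ by interlacing $R$ with an independent reflected Brownian motion on the other side of $\alpha$, matched through their local times at $\alpha$ (the excursion splitting $N=N_++N_-$ used in the proof of Theorem~\ref{thm:limit_SNOdiffosion});
\item identify the clock, including the atom $\wt m(\{\alpha\})$, with the correct boundary normalization of local time.
\end{itemize}

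\paragraph{Traps.}
At a trap with $\wt m((\alpha,\gamma])<\infty$ you must set $\wt m(\{\alpha\})=\infty$, so that $A$ explodes once the DDS continuation of $B$ passes $\alpha$. The interior Green-function construction does not supply these boundary masses.

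The paper applies this theorem precisely in reflecting situations: reflected Brownian motion, \S\ref{sec:reflecting}, and \S\ref{sec:SNOD}. So the reflecting case cannot be left to a remark.
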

 
\begin{remk}
   In their seminal book \cite{ItoMcKean65}, Itô and McKean use a definition of Brownian local time that includes a factor 1/2. This difference is compensated for by a corresponding factor of two in the speed measure. Normalization conventions vary across standard textbooks. Throughout this paper, we follow the convention used in Theorem~\ref{thm:representation_diff_natural} and adjust the formulas accordingly.
\end{remk}
\begin{remk}
The formulation of Theorem~\ref{thm:representation_diff_natural} is adapted from \cite{rogers-williams-vol2}.  In \cite{rogers-williams-vol2}, as in many other monographs, the process is assumed to start from an arbitrary fixed point $\wt x$. However, the assumption of a deterministic initial point is not essential for the structural representation. The same representation holds for an arbitrary initial distribution. More precisely, for any regular diffusion
in  natural scale $\wt X$ on $\wt I$ and any initial distribution $\wt \mu$, there exists a Brownian motion $B$ defined on an extension of the probability space $(\Omega,\cF,\Pb_{\wt\mu})$ with initial distribution $\wt \mu$ such that \eqref{eq:reg_diff_natural} and \eqref{eq:reg_diff_CAF_natural} hold.
\end{remk}

If $X$ is a regular diffusion on $I$ with scale function $s$, setting $\wt X = s(X)$ yields the representation $X = s^{-1} \big( B(A^{-1}) \big)$ via Theorem~\ref{thm:representation_diff_natural}. To obtain a more intuitive representation, it is preferable to express the integral in \eqref{eq:reg_diff_CAF_natural} over the original state space $I$ rather than $\wt I$. To this end, we define a measure $m$ on the Borel subsets of $I$ by the relation $m(A) = \wt m(s(A))$. Note that $s$ and $s^{-1}$ are measurable maps by continuity; hence, $s(A)=(s^{-1})^{-1}(A)$ is a measurable set; measures $\wt m = m \circ s^{-1}$ and $m = \wt m \circ (s^{-1})^{-1}$ are push-forward measures. Applying the change of variables formula, we obtain the following result.

\begin{thm}\label{thm:representation_diff}
Let $X$ be a regular diffusion on $I$ with  scale function $s$, started from $x \in I$. Then there exist a measure $m$ on the Borel $\sigma$-algebra of $I$ satisfying
\begin{equation}
    \label{eq:speed_m}
    0 < m([y_1,y_2]) < \infty \quad \text{for all } y_1, y_2 \in \mathrm{int}(I) \text{ with } y_1 < y_2,
\end{equation}
and a Brownian motion $B$ started from $s(x)$ and defined on some extension of the probability space $(\Omega,\cF,\Pb_x)$, such that $X$ has the representation
\begin{equation}
    \label{eq:reg_diff}
    X(t) = s^{-1}\big(B(A^{-1}(t))\big), \quad t \geq 0, \ \text{a.s.,}
\end{equation}
where
\begin{equation}
    \label{eq:reg_diff_CAF}
    A(t) := \int_{s(I)} l_B(t,y) \, m\circ s^{-1}(\di y) = \int_{I} l_B(t,s(y)) \, m (\di y),
\end{equation}
and $A^{-1}(t) := \inf\{s \geq 0 : A(s) > t\}$.
\end{thm}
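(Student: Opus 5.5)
The plan is to reduce everything to Theorem~\ref{thm:representation_diff_natural}, applied to the natural-scale process $\wt X:=s(X)$. First I will check that $\wt X$ is a regular diffusion in natural scale on $\wt I:=s(I)$. Since $s$ is a strictly increasing continuous bijection $I\to s(I)$ and $s^{-1}$ is continuous, $\wt X$ is continuous and strong Markov with respect to the same filtration. It is honest and regular because the hitting times coincide: $\sigma_y$ for $X$ equals $\wt\sigma_{s(y)}$ for $\wt X$, and $s$ maps $\mathrm{int}(I)$ onto $\mathrm{int}(s(I))$. Its scale function is the identity, since
\[
\Pb(\wt\sigma_{\wt y_1}<\wt\sigma_{\wt y_2})=\frac{s(y_2)-s(x)}{s(y_2)-s(y_1)}=\frac{\wt y_2-\wt x}{\wt y_2-\wt y_1}
\]
by \eqref{eq:scale}. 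The process starts from $\wt x=s(x)$.

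Theorem~\ref{thm:representation_diff_natural} then gives a measure $\wt m$ on $s(I)$ satisfying \eqref{eq:speed_m_natural}, together with a Brownian motion $B$ started at $s(x)$ on an extension of $(\Omega,\cF,\Pb_x)$, such that $s(X(t))=B(A^{-1}(t))$ with $A(t)=\int_{s(I)}l_B(t,\wt y)\,\wt m(\di\wt y)$. I define $m(E):=\wt m(s(E))$ for Borel $E\subset I$. This is a Borel measure because $s(E)=(s^{-1})^{-1}(E)$ is Borel and $s$ is injective, so countable additivity transfers; equivalently, $m=\wt m\circ s$ is the pushforward of $\wt m$ under $s^{-1}$, so $\wt m=m\circ s^{-1}$.

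Property \eqref{eq:speed_m} follows from $s([y_1,y_2])=[s(y_1),s(y_2)]$ with $s(y_1)<s(y_2)$ both interior points of $s(I)$. The first expression in \eqref{eq:reg_diff_CAF} is then exactly $A$. The second expression is the abstract change of variables formula $\int_{s(I)}g\,\di(m\circ s^{-1})=\int_I g\circ s\,\di m$, applied pathwise to the nonnegative measurable function $g=l_B(t,\cdot)$, which is continuous for each $\omega$ and $t$. Finally, applying the continuous map $s^{-1}$ to $s(X(t))=B(A^{-1}(t))$ yields \eqref{eq:reg_diff}.

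The only step needing care is the verification that $s(X)$ is a regular diffusion in natural scale. This means checking that the strong Markov property and the honest and regular properties survive composition with a homeomorphism, which I will do via the correspondence of hitting times described above. The measure-theoretic part is routine; its one subtlety is measurability of $s(E)$, which I handle through the continuity of $s^{-1}$.
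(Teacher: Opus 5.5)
Your proposal is correct and follows the paper's route exactly. Like the paper, you apply Theorem~\ref{thm:representation_diff_natural} to $s(X)$, define $m(E)=\wt m(s(E))$ as a push-forward (with measurability of $s(E)$ coming from the continuity of $s^{-1}$), and read off \eqref{eq:reg_diff_CAF} by change of variables. Your explicit check that $s(X)$ is a regular natural-scale diffusion spells out what the paper simply calls ``clear.''
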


The measure $m$ from Theorem~\ref{thm:representation_diff} is called the \textit{speed measure} of $X$.

It is well known (see, e.g., \cite{rogers-williams-vol2, Revuz+Yor}) that the speed measure of a regular diffusion satisfies the relation 
\ba\label{eq:speed}
\E_x \int_0^{\sigma_{y_1}\wedge\sigma_{y_2}} f(X(t))\,\di t = 2\int_{(y_1,y_2)} G(x,y) f(y) \, m(\di y),  
\ea
for any non-negative measurable function $f$, where the Green function $G$ is  symmetric, and is given, for $ y_1\leq y\leq x\leq  y_2,$ by the formula (see \cite[pp.~303--304]{Revuz+Yor})  
\[
G(x,y) = \frac{(s(y)-s(y_1))(s(y_2)-s(x))}{s(y_2)-s(y_1)}, \quad y_1 \leq y \leq x \leq y_2.
\]

Equation~\eqref{eq:scale} determines the function $s$ uniquely up to a nondegenerate affine transformation. If $ s, \hat s$ are  scale functions and $ m, \hat m$ are the corresponding speed measures for the same diffusion $X$, then there exist constants $a>0$ and $b\in \mbR$ such that
\[
\hat s(x) = a s(x) + b \quad \text{and} \quad \hat m(\di x) =  a^{-1} m(\di x).
\]
Furthermore, once the scale function $s$ is fixed, the restriction of $m$ to $\mathrm{int}(I)$ is uniquely determined by \eqref{eq:speed}.

Before we continue to discuss examples and applications, it is necessary to make a few important remarks regarding the boundary behavior of diffusions.

The additive functional $A(t)$ may be infinite for some $t > 0$. A necessary condition is that $m$   assigns infinite mass to every neighborhood of one of the endpoints of $I$ and that $s$ is finite at that endpoint. Let $t_0 := \inf\{s \geq 0 : A(s) = +\infty\} < \infty$. It can be shown that in this case $\lim_{t\to\infty} X(t)$ exists and is equal to $\alpha$ or $\beta$. Moreover, if this point is accessible in finite time (i.e., is included in $I$), then $X$ is absorbed there upon its first hitting.  For a rigorous classification and characterization of boundaries, we refer the reader to \cite{ItoMcKean65, rogers-williams-vol2, Ito_StochProc, knight}; see Remark \ref{remk:classification exit}  below.    

The following properties of $A$ are well known. We include a proof to make the paper self-contained.
\begin{lem}
    \label{lem:properties of A}
    Let $A(t) :=  \int_{s(I)} l_B(t,y) \, \wt m (\di y)$. Set  $\zeta:=\inf\{t\geq 0 : A(t)=+\infty\}\in[0,\infty]$. Then, almost surely, 
    $A$ is continuous and nondecreasing on $[0,\zeta) $. Moreover,  
    $A$ is strictly increasing on every interval $[a,b]\subset [0,\zeta) $   on which
    $B(t)\in \mathrm{int}(s(I)), t\in[a,b].$
   \end{lem}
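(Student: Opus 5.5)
We work on the full-probability event on which $l_B$ is jointly continuous in $(t,y)$, nondecreasing in $t$ for each $y$, and satisfies the occupation times formula \eqref{eq:loctimeint}. We also use the standard support property: for each $y$, the measure $\di_t l_B(t,y)$ is carried by $\{t : B(t)=y\}$. On this event the argument is deterministic, with $\omega$ fixed.

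\emph{Monotonicity.} Since $t\mapsto l_B(t,y)$ is nondecreasing for every $y$ and $\wt m\ge 0$, the function $A$ is nondecreasing with values in $[0,\infty]$. Consequently $\{t: A(t)<\infty\}$ is an interval containing $[0,\zeta)$, and $A$ is finite there.

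\emph{Continuity on $[0,\zeta)$.} Fix $T<\zeta$, so $A(T)<\infty$. The path $B$ is continuous, so $B([0,T])$ is a compact set $K$, and $l_B(t,y)=0$ for $y\notin K$ and $t\le T$. Hence on $[0,T]$ the integral defining $A$ runs only over $K\cap s(I)$. Take $t_n\to t$ in $[0,T]$. Then $l_B(t_n,y)\to l_B(t,y)$ for every $y$, and $0\le l_B(t_n,y)\le l_B(T,y)$. The dominating function $l_B(T,\cdot)$ is $\wt m$-integrable precisely because $A(T)<\infty$. Dominated convergence then gives $A(t_n)\to A(t)$.

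This step is subtle because $\wt m$ may have infinite mass near an endpoint of $s(I)$, even when $K$ touches it. Bounding $l_B(T,\cdot)$ by a constant would therefore fail. Using $A(T)<\infty$ as the integrability condition avoids this.

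\emph{Strict increase.} Let $[a,b]\subset[0,\zeta)$ with $B(t)\in\mathrm{int}(s(I))$ for $t\in[a,b]$, and take $a\le u<v\le b$. We must show $A(v)-A(u)=\int (l_B(v,y)-l_B(u,y))\,\wt m(\di y)>0$. Since $B([u,v])$ is a compact interval inside $\mathrm{int}(s(I))$, there are two cases.

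\emph{Case 1: $B$ is not constant on $[u,v]$.} Brownian paths are a.s. nowhere constant, so this is the generic case. Then $J:=B([u,v])$ is a nondegenerate compact interval. By the occupation formula applied on $[u,v]$,
\[
\int_J (l_B(v,y)-l_B(u,y))\,\di y=v-u>0 .
\]
Hence the continuous function $y\mapsto l_B(v,y)-l_B(u,y)$ is positive at some interior point $y_0$ of $J$, and therefore on a neighbourhood $[y_1,y_2]\subset \mathrm{int}(s(I))$ of $y_0$. By \eqref{eq:speed_m_natural}, $\wt m([y_1,y_2])>0$, so the integral is strictly positive.

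Two points need care here. First, \eqref{eq:speed_m_natural} requires $y_1<y_2$ interior, which holds. Second, the continuity in $y$ is exactly what converts positivity of the Lebesgue integral into positivity on an open set.

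\emph{Case 2: $B$ is constant on $[u,v]$.} This is excluded by discarding the null set where Brownian paths have intervals of constancy.

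Thus $A$ is strictly increasing on $[a,b]$. The only real obstacle is the dominated-convergence step near endpoints carrying infinite $\wt m$-mass, handled above by using $A(T)<\infty$ as the integrability condition.
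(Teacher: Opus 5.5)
Your proof is correct. Monotonicity and continuity go as in the paper: dominated convergence with $l_B(T,\cdot)$ as the $\wt m$-integrable majorant is what the paper's brief appeal to monotone and dominated convergence amounts to.

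For strict increase you take a genuinely different route.

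\emph{The paper's route.} It fixes rational $t_0<t_1$ and uses the fixed-time fact $l_B(t_1,B(t_0))>l_B(t_0,B(t_0))$ a.s., which comes from the Markov property at $t_0$. Joint continuity upgrades this to a bound $l_B(t_1,y)-l_B(t_0,y)\geq c>0$ on a neighbourhood of $B(t_0)$. The paper then intersects over all rational pairs and fits $[t_0,t_1]$ inside $[a,b]$.

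\emph{Your route.} You apply the occupation times formula on $[u,v]$ to get $\int_J (l_B(v,y)-l_B(u,y))\,\di y=v-u>0$ with $J=B([u,v])$. Continuity in $y$ then gives an open set of positivity inside $\mathrm{int}(s(I))$.

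\emph{Comparison.} Your argument is purely pathwise on one full-measure event and works for all $u<v$ at once. It needs no fixed-time Markov fact and no rational approximation. The paper's argument gives more localized information: the increment is positive precisely near the current position $B(t_0)$.

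Two small remarks.
\begin{itemize}
\item Your Case~2 is superfluous. The displayed identity already forces $J$ to be nondegenerate, since a single point has Lebesgue measure zero.
\item Because $J$ is random, you need the occupation formula to hold off a single null set for all Borel $f$ and all $t$ simultaneously. This is standard, e.g.\ Revuz--Yor. Alternatively, apply it with $f\equiv 1$ and use the support property to restrict the integral to $J$. Both ingredients are in your setup, but note that the paper states \eqref{eq:loctimeint} only for each fixed $f$.
\end{itemize}
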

\begin{proof}
    The monotonicity of $A$ follows from the fact that $t\to l_B(t,y)$ is nondecresing for any $y$. 
    The continuity of $A$ on $[0,\zeta)$ follows from the continuity and monotonicity of the map $t\to l_B(t,y)$, together with the monotone and dominated convergence theorems. 

    It is well known that $\Pb(l_B(t_1,B(t_0))>l_B(t_0,B(t_0)) )=1$ for any fixed $t_1>t_0\geq 0$.
    Since $l_B$ is jointly continuous,
    \[
    \Pb(\exists \ve, c>0\ :\ l_B(t_1,y)-l_B(t_0,y)\geq c,\quad |y-B(t_0)|\leq \ve )=1.
    \]
    So, for almost every $\omega$ such that $B(t_0)\in \mathrm{int}(s(I)),$ we have the inequality
    $$A(t_1)-A(t_0)\geq c \wt m(\{y : \quad |y-B(t_0)|\leq \ve_0\})>0$$
    for every sufficiently small $\ve_0>0$ such that $[B(t_0)-\ve_0,B(t_0)+\ve_0]\subset  s(I)$. Here we used the fact that $\wt m$ assigns positive mass to every nonempty open subinterval of $\mathrm{int}(s(I))$. Taking the intersection over all rational $t_0<t_1$, we obtain an event of probability 1 on which $A(t_1)-A(t_0)>0$ holds simultaneously for all rational $t_0<t_1$ such that $ t_0 \in [0,\zeta).$
    Let $[a,b]\subset [0,\zeta) $  be an interval on which
    $B(t)\in \mathrm{int}(s(I)), t\in[a,b].$ Choosing rational numbers  $a<t_0<t_1<b$ we obtain $A(b)-A(a)\geq A(t_1)-A(t_0)>0$. This proves the second assertion of the lemma.
\end{proof}

A remarkable fact is that the converse of Theorem~\ref{thm:representation_diff} also holds, meaning that formula \eqref{eq:reg_diff} defines a continuous  strong Markov process. The only delicate aspect concerns the behavior of $X$ at the endpoints of $I$. We first formulate this inverse result for a diffusion $\wt X$ in  natural scale;  the general case follows as a straightforward corollary. In the general setting, some caution is needed if $I$ is unbounded and the process can reach  $+\infty$ or $-\infty$   in finite time (which is impossible when a diffusion is on a natural scale).

\begin{thm}\label{thm:reg_diff_inv_natural}
Let $\hat I \subset [-\infty,\infty]$ be an interval with endpoints $-\infty\leq \wt \alpha < \wt \beta\leq\infty$, and let $\wt m$ be a measure on $\hat I$ satisfying \eqref{eq:speed_m_natural}  {for all } $\wt y_1, \wt y_2 \in \mathrm{int}(\hat  I) $. 
Let $\wt\gamma\in(\wt\alpha,\wt\beta)$ be arbitrary. Define the state space $\wt I \subset \hat I$, with $\hat I \setminus \wt I \subset \{\wt \alpha, \wt \beta\}$, according to the following criteria:
\begin{itemize}
    \item the point $\wt \alpha$ is included in $\wt I$ if and only if $\wt \alpha \in \mbR$ and 
    \[
    \int_{(\wt \alpha, \wt \gamma)} (x - \wt \alpha) \, \wt m(\di x) < \infty,
    \]
      (in this case, the process $\wt X$ defined below hits $\wt \alpha$ from $\mathrm{int}(\wt I)$ in finite time with positive probability);
    
    \item the point $\wt \beta$ is included in $\wt I$ if and only if $\wt \beta \in \mbR$ and 
    \[
    \int_{(\wt \gamma, \wt \beta)} (\wt \beta - x) \, \wt m(\di x) < \infty
    \]
    (in this case, the process $\wt X$ hits $\wt \beta$ from $\mathrm{int}(\wt I)$ in finite time with positive probability).
\end{itemize} 
Let $B$ be a Brownian motion started in $\wt I$. Then the process $\wt X$ defined by \eqref{eq:reg_diff_natural} and \eqref{eq:reg_diff_CAF_natural} is a regular diffusion on $\wt I$.
\end{thm}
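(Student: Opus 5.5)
Our plan is to prove that $\wt X(t)=B(A^{-1}(t))$ is a continuous, honest, strong Markov, regular process on $\wt I$. The argument proceeds in four steps, in the following order.

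\textbf{Step 1: path properties of the time change.} By Lemma~\ref{lem:properties of A}, $A$ is continuous and nondecreasing on $[0,\zeta)$. It is strictly increasing on every interval where $B$ stays in $\mathrm{int}(\wt I)$. We would add to this two facts about the boundary points.
\begin{itemize}
\item If $\wt\alpha\in\hat I\setminus\wt I$ is finite, then $\int_{(\wt\alpha,\wt\gamma)}(x-\wt\alpha)\,\wt m(\di x)=\infty$. We must show that $A(t)=\infty$ for every $t$ after the first hitting time $\tau_{\wt\alpha}$ of $\wt\alpha$ by $B$. For this we use the Ray--Knight theorem: near $\tau_{\wt\alpha}$, the process $y\mapsto l_B(\tau_{\wt\alpha},y)$ for $y\downarrow\wt\alpha$ behaves like a squared Bessel process of dimension $2$ started at $0$. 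Combined with Jeulin's lemma, this gives $\int (y-\wt\alpha)\,\wt m(\di y)=\infty \Rightarrow \int l_B(\tau_{\wt\alpha}+\ve,y)\,\wt m(\di y)=\infty$. Hence $\zeta=\tau_{\wt\alpha}$ on this event, and $\wt X$ never reaches $\wt\alpha$.
\item Conversely, if the integral is finite, the same local-time comparison gives $A(\tau_{\wt\alpha})<\infty$. So $\wt\alpha$ is reached at the finite time $A(\tau_{\wt\alpha})$.
\item For $\wt\alpha=-\infty$ there is nothing to prove, since $B$ never reaches $-\infty$.
\end{itemize}
Consequently $A^{-1}(t)<\zeta$ for all $t<A(\zeta-)$, and $\wt X(t)\in\wt I$. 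When $A(\zeta-)<\infty$ we set $\wt X(t)=\lim_{u\to\zeta}B(u)$ for $t\ge A(\zeta-)$; this limit is an endpoint of $\hat I$, and Step 1 shows it lies in $\wt I$. This gives honesty.

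\textbf{Step 2: continuity.} $A^{-1}$ is right-continuous and nondecreasing. A jump of $A^{-1}$ at $t$ corresponds to a flat stretch $[A^{-1}(t-),A^{-1}(t)]$ of $A$. On such a stretch $B$ must lie outside $\mathrm{supp}\,\wt m$, so by Lemma~\ref{lem:properties of A} it lies at an endpoint. The key observation is that $B$ has the same value at both ends of the stretch: the local time at interior points of the support cannot increase there, and $B$ is continuous. Hence $\wt X$ is continuous.

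\textbf{Step 3: strong Markov property.} For each $t$, the time $A^{-1}(t)$ is a stopping time for the filtration of $B$. A stopping time $T$ of the filtration $\cF_{A^{-1}(t)}$ gives a $B$-stopping time $A^{-1}(T)$. Local time is additive: $l_B(A^{-1}(T)+u,y)=l_B(A^{-1}(T),y)+l_{B'}(u,y)$, where $B'=B(A^{-1}(T)+\cdot)$. Hence $A(A^{-1}(T)+u)-T=A'(u)$, the functional built from $B'$. Here we use $A(A^{-1}(T))=T$, which holds by continuity of $A$. The strong Markov property of $B$ then transfers to $\wt X$, with $B'$ started at $\wt X(T)$.

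\textbf{Step 4: regularity and natural scale.} For $x\in\mathrm{int}(\wt I)$ and a target point $y$, the process $B$ hits $y$ a.s. For interior $y$, this happens at a time where $A$ is finite, since $A$ is finite before $B$ leaves $\mathrm{int}\,\hat I$. For an endpoint $y\in\wt I$ this follows from Step 1. Hitting probabilities are preserved by the time change, because the paths of $\wt X$ and $B$ visit points in the same order. So \eqref{eq:scale} holds with $s(x)=x$, and the process is in natural scale.

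\textbf{Main obstacle.} We expect the hardest part to be the boundary analysis in Step 1: the equivalence between finiteness of $\int(x-\wt\alpha)\,\wt m(\di x)$ and finiteness of $A$ at $\tau_{\wt\alpha}$. We would attack it with the Ray--Knight description of $l_B(\tau_{\wt\alpha},\cdot)$ near $\wt\alpha$ together with Jeulin's lemma. If this proves too heavy, we would fall back on computing expected exit times via \eqref{eq:speed}.
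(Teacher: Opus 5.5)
Your plan is correct and follows the route the paper points to. The paper gives no proof of its own and cites Rogers--Williams and It\^o--McKean, but the remark after the theorem isolates exactly your decomposition: the strong Markov property comes from time-changing by a continuous additive functional, continuity is elementary via the flat stretches of $A$, and the boundary criterion is the only substantive point, which your Ray--Knight plus Jeulin argument handles in a standard local-time way.

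Two small repairs. In Step 1, what you need (and what Ray--Knight at $\tau_{\wt\alpha}$ actually gives) is $A(\tau_{\wt\alpha})=\infty$; the weaker statement $A(\tau_{\wt\alpha}+\ve)=\infty$ also holds at an absorbing endpoint that is reached. Also, the finiteness of $A(\tau_{\wt\alpha})$ in the convergent case, like the hitting claims in Step 4, holds only on events such as $\{\tau_{\wt\alpha}<\tau_b\}$ with $b$ interior; this suffices, because regularity only requires positive probability.
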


For the proof, we refer to \cite[pp.~277--278]{rogers-williams-vol2} or \cite{ItoMcKean65}.

\begin{remk} Convergence or divergence of the integrals is independent of the choice of the reference point $\wt \gamma \in (\wt \alpha, \wt \beta)$.
Furthermore, if $\wt \alpha = -\infty$ or $\wt \beta = +\infty$, the integrals 
\[
\int_{(\wt \alpha, \wt \gamma)} (x - \wt \alpha) \, \wt m(\di x) \quad \text{and} \quad \int_{(\wt \gamma, \wt \beta)} (\wt \beta - x) \, \wt m(\di x)
\]
are infinite, respectively. Thus, the explicit requirements $\wt \alpha \in \mbR$ and $\wt \beta \in \mbR$ in the criteria for inclusion in $\wt I$ can be safely omitted, as the boundary points $\pm \infty$ can never be included in the state space.  \end{remk}

\begin{remk}
It is well known that if $K$ is a continuous additive functional of a strong Markov process $Y$, then the time-changed process $Y \circ K^{-1}$ also possesses the strong Markov property. Consequently, the process $\wt X$ defined by \eqref{eq:reg_diff_natural} is strong Markov. The sample-path continuity of $\wt X$ is immediate when $\wt I$ does not contain endpoints (in which case $A$ is almost surely continuous, strictly increasing, and unbounded) and can be rigorously established by elementary   arguments  from analysis when endpoints are included (see \cite[\S 5.3]{ItoMcKean65}). Therefore, the fact that $\wt X$ is a regular diffusion is standard; the nontrivial content of Theorem~\ref{thm:reg_diff_inv_natural} lies  in the precise characterization of which boundary points belong to the state space.
\end{remk}

Let $\wt X$ be a diffusion in   natural scale with speed measure $\wt m$ on an interval $\wt I \subset \mbR$, and let $s: I \to \wt I$ be a continuous  strictly increasing  bijection. Then $X(t) := s^{-1}(\wt X(t))$ is clearly a regular diffusion on $I$. Consequently, any process possessing the representation \eqref{eq:reg_diff}--\eqref{eq:reg_diff_CAF}, where $m = \wt m \circ s$, is a regular diffusion on $I$ with scale function $s$ and speed measure $m$.

In general, it is not entirely accurate to assert that for any strictly increasing continuous function $s: I \to \mbR$ and any measure $m$ on $I$ satisfying \eqref{eq:speed_m}, formulas \eqref{eq:reg_diff} and \eqref{eq:reg_diff_CAF} automatically define a regular diffusion on $I$ because the boundary points require separate treatment.  A subtle problem   arises if $I$ is unbounded and the process can reach, for instance, $+\infty$ in finite time with positive probability. This scenario can occur only if the limit $s(+\infty) := \lim_{x \to +\infty} s(x)$ is finite, which implies that $s(+\infty)$ must be included in the state space of $\wt X$. To address this particular situation, in the following theorem we consider processes taking values in the extended real line $[-\infty, \infty]$, equipped with the usual topology. In principle, the process can exit $+\infty$ after the hitting. The following result is an immediate consequence of Theorem  \ref{thm:reg_diff_inv_natural}.

\begin{thm}\label{thm:reg_diff_inv}
Let $\hat I = [\alpha, \beta] \subset [-\infty, \infty]$ be an interval, let $s: \hat I \to [-\infty, \infty]$ be a continuous, strictly increasing function, and let $m$ be a measure on $\hat I$ satisfying \eqref{eq:speed_m} for all $y_1, y_2 \in \mathrm{int}(\hat I)$. Define the state space $  I \subset \hat I$, with $\hat I \setminus   I \subset \{\alpha, \beta\}$, according to the following criteria:
\begin{itemize}
    \item the point $\alpha$ is included in $  I$ if and only if $s(\alpha) \in \mbR$ and 
    \[
    \int_{(\alpha, \gamma)} (s(x) - s(\alpha)) \, m(\di x) < \infty,
    \]
    where $\gamma \in (\alpha, \beta)$ is an arbitrary point (in this case, the process $X$  defined below hits $\alpha$ from $\mathrm{int}(I)$ in finite time with positive probability);
    
    \item the point $\beta$ is included in $  I$ if and only if $s(\beta) \in \mbR$ and 
    \[
    \int_{(\gamma, \beta)} (s(\beta) - s(x)) \, m(\di x) < \infty
    \]
    (in this case, the process $X$ hits $\beta$ from $\mathrm{int}(I)$ in finite time with positive probability).
\end{itemize} 
Let $B$ be a Brownian motion started in $s(  I)$. Then the process $X$ defined by \eqref{eq:reg_diff} and \eqref{eq:reg_diff_CAF} is a regular diffusion on $ I$.
\end{thm}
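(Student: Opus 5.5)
The plan is to reduce the statement to Theorem~\ref{thm:reg_diff_inv_natural} by a change of variables through $s$. Set $\wt\alpha:=s(\alpha)$ and $\wt\beta:=s(\beta)$, both in $[-\infty,\infty]$. Let $\hat{\wt I}:=s(\hat I)\cap\big([-\infty,\infty]\setminus\{\pm\infty \text{ not attained as finite values}\}\big)$; concretely, $\hat{\wt I}$ is the interval with endpoints $\wt\alpha<\wt\beta$. Define the push-forward measure $\wt m:=m\circ s^{-1}$ on it, i.e. $\wt m(\wt A)=m(s^{-1}(\wt A))$.

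\textbf{Step 1: $\wt m$ satisfies \eqref{eq:speed_m_natural}.} Since $s$ is a continuous strictly increasing bijection $\mathrm{int}(\hat I)\to(\wt\alpha,\wt\beta)$, it maps compact subintervals to compact subintervals. Hence \eqref{eq:speed_m} for $m$ transfers immediately to \eqref{eq:speed_m_natural} for $\wt m$.

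\textbf{Step 2: the boundary criteria match.} Fix $\gamma\in(\alpha,\beta)$ and put $\wt\gamma=s(\gamma)$. By the change of variables formula for push-forward measures,
\[
\int_{(\wt\alpha,\wt\gamma)}(x-\wt\alpha)\,\wt m(\di x)=\int_{(\alpha,\gamma)}(s(y)-s(\alpha))\,m(\di y).
\]
This holds whenever $\wt\alpha\in\mbR$, and the analogous identity holds at $\beta$. Therefore $\wt\alpha$ belongs to the state space $\wt I$ of Theorem~\ref{thm:reg_diff_inv_natural} if and only if $s(\alpha)\in\mbR$ and the integral in the present statement is finite, which is exactly the criterion for $\alpha\in I$. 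The same applies at $\beta$. In other words, $\wt I=s(I)$.

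\textbf{Step 3: identify the processes.} The same change of variables gives
\[
\int_{s(I)}l_B(t,y)\,\wt m(\di y)=\int_I l_B(t,s(y))\,m(\di y).
\]
So the functional $A$ in \eqref{eq:reg_diff_CAF} coincides with the one in \eqref{eq:reg_diff_CAF_natural} built from $\wt m$. A small care point: $\wt m$ may charge an endpoint, but the relevant endpoint lies in $s(I)$ exactly when the corresponding point lies in $I$, so the masses agree. Theorem~\ref{thm:reg_diff_inv_natural} then shows that $\wt X=B(A^{-1})$ is a regular diffusion on $\wt I=s(I)$ with natural scale. Finally, $X=s^{-1}(\wt X)$, where $s^{-1}:s(I)\to I$ is a continuous strictly increasing bijection (a homeomorphism onto $I$, possibly with values $\pm\infty$ in the extended topology). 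Continuity, the strong Markov property and regularity \eqref{eq:def_regular} are all preserved under such a homeomorphism, since hitting times satisfy $\sigma_y^X=\sigma_{s(y)}^{\wt X}$. This proves the statement, and by \eqref{eq:scale} it also shows that $s$ is the scale function of $X$.

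\textbf{Main obstacle.} The only delicate point is the case of infinite endpoints. If $\alpha=-\infty$ but $s(-\infty)\in\mbR$, the point $-\infty$ of the extended line must be treated as a genuine state. One then checks that $s$ extends to a homeomorphism of $\hat I$ onto $[\wt\alpha,\wt\beta]$ in the topology of $[-\infty,\infty]$. Conversely, if $s(\alpha)=-\infty$, then $\wt\alpha=-\infty$ is excluded automatically by the remark following Theorem~\ref{thm:reg_diff_inv_natural}, consistent with the requirement $s(\alpha)\in\mbR$. With this topological bookkeeping in place, the rest of the argument is a direct change of variables.
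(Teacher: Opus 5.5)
Your proposal is correct and follows the paper's own route. The paper treats this statement as an immediate consequence of Theorem~\ref{thm:reg_diff_inv_natural}: it pushes $m$ forward under $s$, so that the endpoint criteria and the functional $A$ transfer by change of variables, and then applies the continuous increasing bijection $s^{-1}$ to $\wt X=B(A^{-1})$. Your handling of endpoint masses and of infinite endpoints with $s(\pm\infty)$ finite just fills in details the paper leaves implicit.
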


\begin{remk}\label{remk:classification exit} 
  The following characterization of the behavior of $X$ at $\alpha$ can be found in \cite{ItoMcKean65, Ito_StochProc, rogers-williams-vol2, Karlin_Taylor1981second}. The characterization at the right endpoint $\beta \in I$ is analogous.
\begin{itemize}
    \item If $s(\alpha) \in \mbR$ and $m([\alpha, \gamma]) < \infty$, then the process $X$ reaches $\alpha$ from $\mathrm{int}(I)$ with positive probability and enters $\mathrm{int}(I)$, when started from $\alpha$, with probability $1$. In this case, the point $\alpha$ is called a \textit{regular boundary}.

    \item If $s(\alpha) \in \mbR$, $m((\alpha, \gamma]) < \infty$, and $m(\{\alpha\}) = \infty$, then the process $X$ reaches $\alpha$ from $\mathrm{int}(I)$ with positive probability and absorbed there upon hitting it. In this case $\alpha$ is an \textit{absorbing point}.
    
    \item If $s(\alpha) \in \mbR$, $\int_{(\alpha, \gamma)} (s(x) - s(\alpha)) \, m(\di x) < \infty$, and $m((\alpha, \gamma]) = \infty$, then the point $\alpha$ is an \textit{exit boundary} (and thus an absorbing point); that is, $X$ reaches $\alpha$ from $\mathrm{int}(I)$ with positive probability but cannot enter $\mathrm{int}(I)$ from $\alpha$.

    \item If $s(\alpha) \in \mbR$ and $\int_{(\alpha, \gamma)} (s(x) - s(\alpha)) \, m(\di x) = +\infty$, then $X$ cannot hit $\alpha$ in finite time (hence $\alpha \notin I$), but $\Pb_x(\lim_{t \to \infty} X(t) = \alpha) > 0$ for all $x \in \mathrm{int}(I)$, and $\lim_{x \to \alpha+} \Pb_x(\lim_{t \to \infty} X(t) = \alpha) = 1$.

    \item If $s(\alpha+) = -\infty$, then $X$ does not hit $\alpha$ in finite time, $\alpha\notin  I$, and $\Pb_x(\lim_{t \to \infty} X(t) = \alpha) = 0$ for all $x \in \mathrm{int}(I)$.
\end{itemize}
\end{remk}

\begin{remk}\label{remk:local_time}
The local time of the diffusion $X$ is defined as 
\begin{equation}
    \label{eq:int_loc_time_X0}
    L_X(t,y) := l_B(A^{-1}(t),s(y)), \quad t \geq 0,
\end{equation}
for all $y \in I$ that are not traps, and in particular for all $y \in \mathrm{int}(I)$; see \cite[\S 5.4]{ItoMcKean65}.

Almost surely, the local time $L_X(t,y)$ is jointly continuous in  $(t, y) \in [0,\infty) \times \mathrm{int}(I)$. Indeed, the functions $(t,x) \mapsto l_B(t,x)$, $x \mapsto s(x)$, and $t \mapsto A(t)$ are almost surely continuous. Moreover, $A$ is strictly increasing whenever the Brownian motion $B$ belongs to $\mathrm{int}(s(I))$; see Lemma \ref{lem:properties of A}. Consequently, the inverse $A^{-1}$ is continuous at those time instants when $X$ is in $\mathrm{int}(I)$. At a jump of $A^{-1}$, the corresponding Brownian path segment lies outside $\mathrm{int}(s(I))$, and hence the Brownian local time at interior points does not change during this segment. Together with joint continuity of $l_B$, this yields  the joint continuity of   $L_X$ on  $[0,\infty)\times \mathrm{int}(I)$. 

It follows from \eqref{eq:loctimeint} and standard real analysis arguments that
\begin{equation}\label{eq:int_loc_time_X}
    \Pb\left( \int_0^t f(X(z)) \, \di z = \int_I f(y) L_X(t,y) \, m (\di y) \text{ for all } t \geq 0 \right) = 1
\end{equation}
for any bounded measurable function $f$ supported on the set of non-trap points; see \cite[\S 5.4]{ItoMcKean65} and \cite[Thm~49.1, Ch.~V.7]{rogers-williams-vol2}. 

The continuity of $L_X$ and relation \eqref{eq:int_loc_time_X} imply that 
\begin{equation}\label{eq:loc time = limit}
\begin{aligned}
L_X(t,y)
&= \lim_{\ve \to 0+} \frac{\int_0^t \1_{\{|X(z)-y| \leq \ve\}} \, \di z}{m([y-\ve,y+\ve])} \\
&= \lim_{\ve \to 0+} \frac{\int_0^t \1_{\{X(z) \in [y, y+\ve]\}} \, \di z}{m([y, y+\ve])} \\
&= \lim_{\ve \to 0+} \frac{\int_0^t \1_{\{X(z) \in [y-\ve,y]\}} \, \di z}{m([y-\ve,y])}
\end{aligned}
\end{equation}
for every $t \geq 0$ and $y \in \mathrm{int}(I)$. Hence, $L_X$ is indeed a local time in the standard sense; namely, it is a continuous additive functional of $X$, and $t \mapsto L_X(t,y)$ increases only when $X$ visits $y$. If $y \in I$ is a non-trap  left or right endpoint of $I$, then $L_X(t,y)$ can be expressed as $\lim_{\ve \to 0+} \frac{\int_0^t \1_{\{X(z) \in [y,y+\ve)\}} \, \di z}{m([y, y+\ve))}$ or $\lim_{\ve \to 0+} \frac{\int_0^t \1_{\{X(z) \in [y-\ve,y]\}} \, \di z}{m([y-\ve,y])}$, respectively (see \cite[\S 5.4]{ItoMcKean65}). Although the path continuity of $t \mapsto L_X(t,y)$ at the endpoints was not explicitly formulated there, it can be verified directly from the steps of their proof. To make the paper self-contained we include the arguments for the case when  the left endpoint $\alpha$ is included in  $I$ and is not a trap.  Since $A$ is continuous and nondecreasing, the function $A^{-1}$ is a càdlàg increasing function. Hence   $L_X$ is continuous at  $(t,\alpha)$ whenever $t$   is not a jump of $A^{-1}$. It remains  to verify  that on a set of complete probability we have
$$
\lim_{n\to\infty} l_B(A^{-1}(t_n),s(x_n)) = l_B(A^{-1}(t),s(\alpha))
$$
for any $t>0$ that is a point of jump of $A^{-1}$  and any sequences $(t_n)\subset [0,\infty), (x_n)\subset I $ such that $\limn t_n=t, \quad \limn  x_n=\alpha.$ By the right-continuity of $A^{-1}$, it suffices to consider only $t_n<t.$

Since $t$ is a point of jump of $A^{-1}$, there is a nondegenerate maximal interval $[z_-, z_+]\subset (0,\infty)$ such that $A(z)=t, z\in(z_-,z_+).$  Then $B(z)\notin s(I), z\in(z_-,z_+)$, $B$ visits $\mathrm{int}(s(I))$ in any neighborhood of $z_\pm,$ and $A^{-1}(t-)=z_-, \quad A^{-1}(t)=z_+.$  The local time of a Brownian motion  $l_B$ is jointly continuous. Thus 
$\lim_{n\to\infty} l_B(A^{-1}(t_n),s(x_n)) = l_B(z_-,s(\alpha))$. Since $B(z)\neq s(\alpha) , z\in(z_-,z_+)$ we have equality of local times $l_B(z_-,s(\alpha))=l_B(z_+,s(\alpha))$, 
which completes the proof of continuity of $L_X.$

\end{remk}

\begin{remk}
  
Assume that $\alpha \in I$ is the left endpoint of $I$ and is not a trap. Then we can obtain an analogue of the representations \eqref{eq:reg_diff} and \eqref{eq:reg_diff_CAF}, where the   Brownian motion $B$ is replaced by a Brownian motion $B_+$ reflected at the point $s(\alpha)$:
\begin{equation}
    \label{eq:reg_diff_refl}
    X(t) = s^{-1}\big(B_+(A_+^{-1}(t))\big), \quad t \geq 0. 
\end{equation}
Here
\begin{equation}
    \label{eq:reg_diff_CAF_refl}
    A_+(t) := \int_{s(I)} l_{B_+}(t,y) \, m\circ s^{-1}(\di y) = \int_{I} l_{B_+}(t,s(y)) \, m (\di y),
\end{equation}
and $A_+^{-1}(t) := \inf\{s \geq 0 : A_+(s) > t\}$. The local time $l_{B_+}(t,u)$ is defined by the formula $\lim_{\ve \to 0+} \frac{1}{\ve} \int_0^t \1_{\{B_+(z) \in [u, u +\ve)\}} \, \di z$. Formulas \eqref{eq:int_loc_time_X0} and \eqref{eq:int_loc_time_X} remain valid if we replace $B$ and $A$ with $B_+$ and $A_+$. For the proof, we refer to \cite[\S 5.5]{ItoMcKean65}, which establishes a deep result concerning the chain rule and the subordination of diffusions. Note that the symmetric local time coincides with both the right and left local times for every $y \in \mathrm{int}(I)$. The setting involving the right endpoint of $I$, or both endpoints simultaneously, is entirely analogous.
\end{remk}

\subsection{A functional approach: generators and  boundary conditions}

Let $X$ be a regular diffusion on $I$, and let $\mathcal A$ denote its generator. Recall that, for $f\in\mathrm{Dom}(\mathcal A)$, the function
\[
u(t,x):=\E_x f(X(t))
\]
satisfies the Kolmogorov backward equation
\[
u_t=\mathcal A u,\qquad t>0,
\]
with initial condition $u(0,x)=f(x)$.

Thus, in order to describe the corresponding PDE, one needs to identify both the action of the generator and its domain. The former determines the differential operator in the interior of the state space, while the latter determines the boundary and interface conditions. 

In terms of the scale function and the speed measure, the generator has the classical form\footnote{Sometimes the generator $\cA$ is written without the factor of $1/2$, as seen in the monograph by It\^o and McKean \cite{ItoMcKean65}. This convention plays no role as long as the corresponding adjustments are made to the speed measure $m$ and the definition of the local time.}
\[
\mathcal{A}f = \frac{1}{2} D_m D_{s} f.
\]
The notion of a generator is somewhat delicate, since it is formulated in different ways in the literature.   We  explain this classical formula, with particular emphasis on the endpoints of $I$ and on the atoms of $m$. The rigorous definition of \(\mathcal A\) will not be needed in our proofs. We include this subsection to clarify the connection between diffusions and the corresponding PDEs with boundary conditions.

 There are several standard approaches to defining the generator, which, under appropriate conditions, lead to equivalent characterizations of the law of \(X\).  Among the standard approaches are: (a)   fixing  a suitable Banach space and constructing Markov semigroups, (b) defining a generator via a  martingale problem, (c) using Dynkin's characteristic operator, among   others. For example, Itô and McKean  started with resolvents $ R_\lambda f(x):=\E_x\int_0^\infty e^{-\lambda s} f(X(s)) \di s, \lambda>0$ defined on the set of bounded  continuous functions. Then they defined $\cA $ as the (unbounded) operator such that $(\lambda -\cA)^{-1}=R_\lambda$. Subsequently, they associated $\cA$ with Dynkin's operator and with ``weak'' infinitesimal operator. 

In all cases, it is well known that $P_t f(x) := \E_x f(X(t)), t\geq 0$ maps bounded continuous functions to bounded continuous functions; see \cite[Proposition 50.1]{rogers-williams-vol2}. However,   $X$ may not be a Feller--Dynkin process, i.e., $(P_t)$ may  not always map $C_0(I)$ into itself. This  happens if and only if  at least one of the boundary points is an entrance point; see \cite{criens2023feller}. Introduce the space $\bar{C}(I)$ of continuous functions on $I$ that have finite limits at the endpoints of $I$ (which is automatic if  the endpoints  belong to $I$), equipped with the norm $\|f\| := \sup_{x \in I} |f(x)|$.  It can be shown  that  $(P_t)_{ t\geq 0}$, restricted to   $\bar{C}(I)$, is  a strongly continuous semigroup.

The  generator of the semigroup, $\mathcal{A}: \mathrm{Dom}(\mathcal{A}) \subset \bar{C}(I) \to \bar{C}(I)$,  is defined by the standard formula $\cA f =\lim_{t\to0+} \frac{P_t f-f}{t}, $ where $\mathrm{Dom}(\mathcal{A})$ consists of $f\in \bar{C}(I)$  such that the limit exists in the supremum norm.

\begin{remk}\label{remk: relation Kolmogorov and weak convergence}
The representation of the solution to the Kolmogorov backward equation
$$
u(t,x)=\E_x f(X(t))
$$
also relates weak convergence of Markov processes to convergence of the corresponding solutions of the Kolmogorov backward equations.
Indeed, suppose that
\[
u_n(t,x)=\E_x f_n(X_n(t)), \qquad n\geq0,
\]
where $x$ belongs to the state spaces $E_n$ of the processes $X_n$. Assume also that all $E_n$ are contained in a
common metric space $E$,  all  $f_n$, $n\geq0$, are bounded
continuous functions defined  on the space $E$, and  that 
 $\limn \sup_{x\in E}|f_n(x)-f_0(x)|=0.$ 
 
 Let $x_n\in E_n, n\geq 0$, with $\limn x_n=x_0$, and suppose that, for a fixed
$t\geq0$, we have convergence in distribution $X_n(t)
\Rightarrow
X_0(t)$ when  $X_n(0)=x_n, n\geq 0.$
Then   $\limn u_n(t,x_n)= u_0(t,x_0).$
 \end{remk}

It can be shown (see \cite{ItoMcKean65, Ito_StochProc, Revuz+Yor}) that if $f \in \mathrm{Dom}(\mathcal{A})$, then for any $x \in \mathrm{int}(I)$ the following derivatives exist:
\begin{gather*}
D_s^+ f(x) = \lim_{\ve \to 0+} \frac{f(x+\ve) - f(x)}{s(x+\ve) - s(x)}, \\[2mm]
D_s^- f(x) = \lim_{\ve \to 0+} \frac{f(x-\ve) - f(x)}{s(x-\ve) - s(x)}, \\[2mm]
D_m D_s^+ f(x) = \lim_{\ve', \ve'' \to 0+} \frac{D_s^+ f(x+\ve'') - D_s^+ f(x-\ve')}{m([x-\ve', x+\ve''])}, \\[2mm]
D_m D_s^- f(x) = \lim_{\ve', \ve'' \to 0+} \frac{D_s^- f(x+\ve'') - D_s^- f(x-\ve')}{m([x-\ve', x+\ve''])},
\end{gather*}
and satisfy
\[
\mathcal{A}f(x) = \frac{1}{2} D_m D_{s}^\pm f(x), \quad x \in \mathrm{int}(I).
\]
Moreover, if $m(\{x\}) = 0$ (see \cite[Chapter~VII, Theorem~3.12]{Revuz+Yor}), then we have equality 
$$D_s^+ f(x) = D_s^- f(x) =: D_s f(x),$$
and the fundamental relation
\[
D_s f(x_2) - D_s f(x_1) = 2 \int_{x_1}^{x_2} \mathcal{A}f(y) \, m(\di y)
\]
holds for any $x_1, x_2 \in \mathrm{int}(I)$ provided that $m(\{x_1\}) = m(\{x_2\}) = 0$. On the other hand, if $m(\{x\}) > 0$, then the one-sided derivatives satisfy the jump condition
\begin{equation}
    \label{eq:boundary condition with atom two-sided}
    D_s^+ f(x) - D_s^- f(x) = 2 \, m(\{x\}) \mathcal{A}f(x).
\end{equation}

The domain of the generator \cite{ItoMcKean65} equals the set of functions $f \in \bar{C}(I)$ such that:
\begin{enumerate}
    \item[\rm (a)] $D_m D_s^+ f(x)$ and  $D_m D_s^- f(x)$ are well-defined and coincide  for all $x \in \mathrm{int}(I)$,
    \item[\rm (b)] $\frac{1}{2} D_m D_s^\pm f$  belongs to   $ \bar{C}(I)$, 
    \item[\rm (c)] $f$ satisfies certain boundary conditions at those endpoints of   $I$ that belong to $I$.
\end{enumerate}

We present the boundary conditions only for the left endpoint $\alpha$. The right endpoint is treated similarly. If $\alpha \in I$ and $m([\alpha, \alpha+\ve]) < \infty$ for small $\ve > 0$, then 
\begin{equation}
    \label{eq:boundary_condition_general}
    2 \, m(\{\alpha\}) \mathcal{A}f(\alpha) = D_s^+ f(\alpha).
\end{equation}
In all other cases, the process $X$ cannot exit from $\alpha$ into $\mathrm{int}(I)$, and $\mathcal{A}f(\alpha) = 0$.

\subsection{Examples: Brownian motion with semipermeable and sticky interfaces}
In this section, we   present several well-known examples of diffusions, their scale functions, speed measures, local times, and various stochastic representations. 
\subsubsection{Brownian motion}  
The identity function $s(x) = x$ and the Lebesgue measure $m(\di x) = \di x$ are a scale function and a speed measure of a Brownian motion on $I = \mbR$. Note that $s_c(x) = c x$ and $m_c(\di x) = c^{-1} \di x$ are also a scale function and a speed measure of a Brownian motion for any $c > 0$. The domain of $\cA$ consists of $u\in C^2(\mbR)$ such that $u, u''\in \bar C(\mbR).$ Moreover, $\cA u =\frac{1}{2}u'' $ for  such   $u.$

\subsubsection{Reflected Brownian motion}
  Let $W$ be a   Brownian motion with $W(0) \geq 0$ a.s. The process $X(t) = |W(t)|$, $t \geq 0$, is called a reflected Brownian motion. This is an example  of a regular diffusion on $I = [0,\infty)$  with a scale function $s(x) = x$ for $x \geq 0$ and speed measure $m(\di x) = \1_{\{x \geq 0\}} \di x$. Note that $s_c(x) = c x$ and $m_c(\di x) = \1_{\{x \geq 0\}} c^{-1} \di x$ are also scale functions and speed measures of a reflected Brownian motion for any $c > 0$. 
The domain of $\cA$ consists of $u\in C^2([0,\infty))$ such that $u, u''\in \bar C([0,\infty)) $ that satisfy boundary condition  \eqref{eq:boundary_condition_general},   which in this case is the Neumann
boundary condition $u'(0+)=0.$  Moreover, $\cA u =\frac{1}{2}u'' $ for  such   $u.$

It follows from Theorem \ref{thm:representation_diff} and the occupation times formula \eqref{eq:loctimeint} that there exists a Brownian motion $B$ such that $X(t) = B(A_+^{-1}(t))$, $t \geq 0$, where $A_+(t) := \int_0^t \1_{\{B(z) \geq 0\}} \, \di z$.  

Let
\[
l_W(t,y) := \lim_{\ve \to 0+} \frac{1}{2\ve} \int_0^t \1_{\{|W(z)-y| \leq \ve\}} \, \di z
\]
and
\[
l_{|W|}(t,y) := \lim_{\ve \to 0+} \frac{1}{2\ve} \int_0^t \1_{\{||W(z)| - y |\leq  \ve\}} \, \di z
\]
be the symmetric local times of $W$ and $|W|$, respectively. It is easy to verify that almost surely
\[
l_{|W|}(t,y) = \begin{cases}
    l_W(t,y) + l_W(t,-y), & t \geq 0, \, y > 0; \\
    l_W(t,0), & t \geq 0, \, y = 0.
\end{cases}
\]
On the other hand, relation \eqref{eq:int_loc_time_X0} yields $L_{|W|}(t,y) = l_B(A_+^{-1}(t), y)$ for $t > 0$ and $y \geq 0$, which is continuous in $y\in [0,\infty)$. Note that $l_{|W|}(t,y) = L_{|W|}(t,y)$ for $y > 0$, but $L_{|W|}(t,0) = 2 l_{|W|}(t,0)$. This corresponds to Remark~\ref{remk:local_time} since
\[
L_{|W|}(t,0) := \lim_{\ve \to 0+} \frac{1}{\ve} \int_0^t \1_{\{|W(z)| \in [0, \ve]\}} \, \di z.
\]

\subsubsection{Space transformation of a regular diffusion}\label{expl:transformationSM}
Let $X$ be a regular diffusion on $I$ with scale function $s$ and speed measure $m$, and let $f: I \to J$ be a continuous and increasing bijection. It is clear that $Y(t) := f(X(t))$ is a regular diffusion on $J$. It follows from \eqref{eq:scale},   \eqref{eq:reg_diff}, and \eqref{eq:reg_diff_CAF}  that its scale function and speed measure are given by $s_Y = s \circ f^{-1}$ and $m_Y := m \circ f^{-1}$, respectively. Here, $s \circ f^{-1}$ denotes the composition of $s$ with the inverse function $f^{-1}$, and $m \circ f^{-1}$ is the image (push-forward) of the measure $m$ under the map $f$; that is, $m \circ f^{-1}(A) = m(\{x \in I : f(x) \in A\})$.

In particular, if $m(\di x) = \rho(x) \di x$ possesses a density $\rho$, and $f$ is continuously differentiable with $f'>0$, then the speed measure of $Y$ satisfies $m_Y(\di y) = \frac{\rho(f^{-1}(y))}{f'(f^{-1}(y))} \, \di y$.

\subsubsection{Scale function and speed measure of a solution to a stochastic differential equation}\label{subsec:SDEScale}
Consider a stochastic differential equation (SDE)
\begin{equation*}
\di X(t) = a(X(t)) \, \di t + b(X(t)) \, \di W(t), 
\end{equation*}
where $a$ and $b$ are continuous functions with $b(x) \neq 0$ for all $x \in \mbR$. Then the SDE has a  weak solution, unique in law, up to the explosive time. If, for any starting point, the solution does not explode almost surely (for instance, if $a$ and $b$ are bounded), then this solution is automatically a regular diffusion on $I = \mbR$. Any function $s$ and measure $m$ that satisfy the relations
\begin{equation*}
s'(x) = \exp\left\{-2\Phi(x)\right\}, 
\quad
m(\di x) = m'(x) \, \di x,
\end{equation*}
where 
\begin{equation*}
\Phi'(x) = \frac{a(x)}{b^2(x)}, 
\quad
m'(x) = \frac{\exp\left\{ 2\Phi(x)\right\}}{b^2(x)}, \quad x\in\mbR.
\end{equation*}
are a scale function   and a  speed measure of $X;$ see \cite[page 297]{rogers-williams-vol2}.

\subsubsection{Skew Brownian motion} \label{expl:skewBM}
Recall that a skew Brownian motion with parameter $\gamma \in [-1,1]$ is defined as the unique solution to the SDE
\begin{equation}\label{eq:skew}
\di X(t) = \di W(t) + \gamma \, \di L_0^X(t),
\end{equation}
where $L_0^X(t) = \lim_{\ve \to 0+} \frac{1}{2\ve} \int_0^t \1_{\{|X(z)| < \ve\}} \, \di z$ is the symmetric local time of $X$ at the origin (see \cite{Harrison+Shepp}). If  $\gamma \notin [-1,1]$, then there is neither a strong nor a weak solution to \eqref{eq:skew}.   If $\gamma = 1$ (or $\gamma = -1$), then $X$ becomes a Brownian motion reflected upward (or downward, respectively) at $0$. If $|\gamma| < 1$, then $X$ is a regular diffusion on $I = \mbR$, and its canonical scale function and speed measure are given by \cite[\S 4.2]{Lejay:skew}:
\begin{equation}\label{eq:skew-scale-speed}
s(x) = \begin{cases} 
    p^{-1} x, & x \geq 0, \\
    q^{-1} x, & x < 0, 
\end{cases} \quad 
m(\di x) = \begin{cases}
    p \, \di x, & x \geq 0, \\
    q \, \di x, & x < 0,
\end{cases}
\end{equation}
where $p = \frac{1+\gamma}{2}$ and $q = \frac{1-\gamma}{2}$, i.e., $\gamma = p - q$ and $p = 1 - q \in (0,1)$. Sometimes, $p$ and $q$ are associated with `probabilities' that an excursion of the skew Brownian motion `selects' positive and negative direction upon leaving 0.

It follows from our preceding discussions that a continuous increasing function $\hat{s}$ and a measure $\hat{m}$ are the scale function and the speed measure of the skew Brownian motion if and only if they fulfill the following requirements:
\begin{itemize}
    \item $\hat{s}$ is affine on $(-\infty,0)$ and on $(0,\infty)$; 
    \item $q \hat{s}'(0-) = p \hat{s}'(0+)$;
    \item $\hat{m}(\di x) = (\hat{s}'(0-))^{-1} \1_{\{x < 0\}} \di x + (\hat{s}'(0+))^{-1} \1_{\{x \geq 0\}} \di x$.
\end{itemize} 

Let $X $ be constructed by formula \eqref{eq:reg_diff}, where $s$ and $m$ are given in \eqref{eq:skew-scale-speed}. 
Let us calculate the left, the right, and the symmetric local time of $X$ at 0 in terms of the local time of the Brownian motion from \eqref{eq:reg_diff}. Observe that  $s^{-1}(x)=\begin{cases} 
    p  x, & x \geq 0, \\
    q  x, & x < 0.
\end{cases}$
 
It follows from  \eqref{eq:int_loc_time_X} and the change of variables formula that  
\ba
L^{\text{{right}}}_{X_{skew}}(t,0)
&:= \lime \ve^{-1} \int_0^t \1_{ X_{skew}(z)\in[0,\ve] }\di z \\
&= \lime \ve^{-1}  \int_\mbR   \1_{ y\in[0,\ve]  } L_{X_{skew}}(t,y) m_{skew}(\di y) \\
&= \lime    \ve^{-1}\int_\mbR   \1_{ y\in[0,\ve] }  l_{B}(A^{-1}(t),s_{skew}(y)) m_{skew}(\di y) \\
&= \lime \ve^{-1}  \int_\mbR  \1_{ y\in[0,\ve] }  l_{B}(A^{-1}(t),y/p)  p \di y \\
&= p \, l_{B}(A^{-1}(t),0)
\ea
because $l_B$ is jointly continuous. Similarly
\[
\begin{aligned}
L^{\text{{left}}}_{X_{skew}}(t,0)
&:= \lime \ve^{-1} \int_0^t \1_{ X_{skew}(z)\in[-\ve,0] }\di z \\
&= q\, l_{B}(A^{-1}(t),0) ,
\end{aligned}
\]
and consequently the symmetric local time of $X_{skew}$ at 0 equals
\begin{equation}
    \label{eq:local time skewBM}
\begin{aligned}
L^{\text{{symmetric}}}_{X_{skew}}(t,0)
&=\lime (2\ve)^{-1} \int_0^t \1_{ |X_{skew}(z)|\leq \ve }\di z \\
&=\frac12 l_{B}(A^{-1}(t),0).
\end{aligned}
\end{equation}
\begin{remk}
    If $p=q=\frac{1}{2}$, then the skew Brownian motion is a  
    Brownian motion $W$. At first sight, the factor  $1/2$ in \eqref{eq:local time skewBM} may appear surprising.  If we use $s$ and $m$ from \eqref{eq:skew-scale-speed}, then $A(t)=\int_{\mbR} l_B(t,y) \,  \di y/4 =t/4, \quad A^{-1}(t)=4t, $  $s^{-1}(x)=x/2$. Consequently,  $W(t)=X_{\text{skew}}(t) = \frac{1}{2}B(4t)$. Here, if $X_{\mathrm{skew}}(0)=x$, then $B(0)=2x$, so that $\frac12 B(4t), t\geq 0,$ is a Brownian motion started from $x$. In particular, when  $x=0,$ it has the same law as $B$
\end{remk}

The domain of the generator $\mathrm{Dom}(\mathcal{A})$ consists of functions $f \in \bar{C}(\mbR)$ such that the second derivative $f''(x)$ exists for all $x \neq 0$ and can be extended by continuity to a function belonging to $\bar{C}(\mbR)$. Moreover, since $m(\{0\}) = 0$, the continuity condition for the scale derivative $D_s^+ f(0) = D_s^- f(0)$ yields the transmission (boundary) condition at the origin: $p f'(0+) = q f'(0-)$. 

Thus, the domain $\mathrm{Dom}(\mathcal{A})$ consists of $f \in \bar{C}(\mbR)$ such that 
\begin{enumerate}
    \item $f''(x), x\neq 0$ exists, $f''(0+)=f''(0-)$ and if we define $f''(0):=\lim_{x\to0}f''(x),$ then $f''\in \bar{C}(\mbR)$;
    \item $p f'(0+) = q f'(0-)$.
\end{enumerate}
 For $f\in \mathrm{Dom}(\mathcal{A})$ we have $\mathcal{A}f = \frac{1}{2} f''$.

The backward Kolmogorov equation for $u(t,x) = \E_x f(X(t))$ is given by
\[
u_t(t,x) = \frac{1}{2} u_{xx}(t,x), \quad x \neq 0, \ t > 0,
\]
where $f\in \mathrm{Dom}(\mathcal{A})$ and the interface condition holds at $x=0$:
\begin{equation}
    \begin{aligned}
\label{eq:skewBM-interface conditions}
u(t,0) &=u(t,0+)  = u (t,0-), \quad t > 0,\\
    p u_x(t,0+) &= q u_x(t,0-), \quad t > 0, \\
    u_{xx}(t,0+) &= u_{xx}(t,0-), \quad t > 0.
\end{aligned}
\end{equation}
Here, the first and third relations express continuity across the interface, while the second one is the nontrivial transmission condition.

\subsubsection{ Brownian motion with many semipermeable membranes}\label{expl:many membranes}
Consider the SDE with local times
\begin{equation}\label{eq:skew-2}
\di X(t) = \di W(t) + \sum_{i=1}^n \gamma_i \, \di L_{a_i}^X(t),
\end{equation}
where $a_1 < \dots < a_n$, $\gamma_i \in (-1,1)$, and $L_{a_i}^X(t) = \lim_{\ve \to 0+} \frac{1}{2\ve} \int_0^t \1_{\{|X(z)-a_i| < \ve\}} \, \di z$ is the symmetric local time of $X$ at $a_i$ for $1 \leq i \leq n$. 

The properties of this process on each interval containing only one of the points $a_i$ are the same as for a skew Brownian motion. Hence, any continuous strictly increasing function $s$ and a measure $m$ that satisfy the following properties form the  scale function and the speed measure pair for $X$:
\begin{itemize}
    \item $s$ is affine on each of the intervals $(a_i, a_{i+1})$ for $0 \leq i \leq n$, where $a_0 := -\infty$ and $a_{n+1} := +\infty$;
    \item $q_i {s}'(a_i-) = p_i {s}'(a_i+)$, where $p_i = 1 - q_i = \frac{1+\gamma_i}{2}$ for $1 \leq i \leq n$;
    \item ${m}(\di x) = \Big( \sum_{i=0}^{n} {s}'(a_i+)^{-1} \1_{\{x \in (a_i, a_{i+1})\}} \Big) \, \di x$.
\end{itemize} 
Note that ${s}'(a_{i+1}-) = {s}'(a_i+)$ for $1 \leq i \leq n-1$ because $s$ is linear on the intermediate intervals.

The backward Kolmogorov equation takes the form  
\[
u_t(t,x) = \frac{1}{2} u_{xx}(t,x), \quad x \notin \{a_1,\dots, a_n\}, \ t > 0,
\]
with the following interface conditions for  $1 \leq i \leq n$:
\begin{equation}
\label{eq:multi-skewBM-interface conditions}
    \begin{aligned}
    u(t, a_i) &= u(t, a_i+)=u(t,a_i-), \\
    p_i u_x(t, a_i+) &= q_i u_{x}(t,a_i-), \\
    u_{xx}(t, a_i+) &= u_{xx}(t,a_i-).
\end{aligned}
\end{equation}
 As above, only the second relation is a transmission condition; the other two express continuity across the interface.

\subsubsection{Skew-sticky Brownian motion}\label{expl:skew-sticky BM}
    Let $X_{\text{skew}} = X_{\text{skew}(\gamma)}$ be a  skew Brownian motion with parameter $\gamma = p - q \in (-1,1)$, whose scale function and speed measure are given by 
    \[
    \begin{aligned}
    s_{\text{skew}}(x)
    &= \frac{x}{q}\mathbb{I}_{\{x < 0\}} + \frac{x}{p}\mathbb{I}_{\{x \geq 0\}}, \\
    \mathrm{d} m_{\text{skew}}(x)
    &= (q\mathbb{I}_{\{x < 0\}} + p\mathbb{I}_{\{x \geq 0\}})\mathrm{d} x,
    \end{aligned}
    \]
    where $p = \frac{1+\gamma}{2}$ and $q = \frac{1-\gamma}{2}$. 
    
    For any $\lambda > 0$, we define a continuous strictly increasing additive functional $A_{\lambda}(t) := t + \lambda L_{X_{\text{skew}}}(t,0)$, where $L_{X_{\text{skew}}}(t,0)= L^{\text{{symmetric}}}_{X_{\text{skew}}}(t,0)$ is the symmetric local time of $X_{\text{skew}} = X_{\text{skew}(\gamma)}$ at the origin. The process 
    \[
    X^\lambda(t) := X^{\text{sticky}(\lambda)}_{\text{skew}(\gamma)}(t) := X_{\text{skew}}(A_{\lambda}^{-1}(t)), \quad t \geq 0,
    \]
    is a regular diffusion on $\mathbb{R}$ called a \emph{skew-sticky Brownian motion}. This time-change makes the point $0$ ``sticky'', allowing the process to spend a positive Lebesgue measure of time there.
For SDE characterizations of sticky Brownian motion and sticky reflected Brownian motion, see \cite{engelbertPeskir2014}.
 
    Let us determine the scale function and speed measure of $X^\lambda$.
    
    We can write $X^\lambda(t) = s_{\text{skew}}^{-1}\big(B \circ A^{-1} \circ A_{\lambda}^{-1}(t)\big) = s_{\text{skew}}^{-1}\big(B \circ (A_{\lambda} \circ A)^{-1} (t)\big)$, where $A(t) = \int_\mathbb{R} l_B(t, s_{\text{skew}}(y)) \, \mathrm{d} m_{\text{skew}}(y)$. 
  Apply \eqref{eq:local time skewBM}:
    \[
    \begin{split}
        (A_{\lambda} \circ A)(t) &= A(t) + \lambda L_{X_{\text{skew}}}(A(t),0) = A(t) + \frac{\lambda}{2} l_{B}\big(A^{-1}(A(t)), 0\big) \\
        &= \int_\mathbb{R} l_B(t, s_{\text{skew}}(y)) \left( \mathrm{d} m_{\text{skew}}(y) + \frac{\lambda}{2} \delta_0(\mathrm{d} y) \right).
    \end{split}
    \]
    The scale function $s$  of the skew-sticky Brownian motion matches $s_{\text{skew}}$, while  its speed measure $\mathrm{d}m$ acquires an atom at the origin:
    \begin{equation}
    \label{eq:skewstickyscale}
    \begin{aligned}
    s(x)
    &= \frac{x}{q} \mathbb{I}_{\{x < 0\}} + \frac{x}{p} \mathbb{I}_{\{x \geq 0\}}, \\
    \mathrm{d} m(x)
    &= \left( q \mathbb{I}_{\{x < 0\}} + p \mathbb{I}_{\{x \geq 0\}} \right) \mathrm{d} x
       + \frac{\lambda}{2} \delta_0(\mathrm{d} x).
    \end{aligned}
    \end{equation}

    Transmission condition \eqref{eq:boundary condition with atom two-sided} at $0$ takes the form:
    \begin{equation}
    \label{eq:sticky_boundary_cond}
    p u'(0+) - q u'(0-) = 2\frac{\lambda}{2} \frac{u''(0)}{2}= \frac{\lambda}{2}  u''(0).
 \end{equation}
    In particular, the transmission condition for the sticky Brownian motion (i.e., the case $p=q=1/2$) is $u'(0+) -   u'(0-) =    \lambda   u''(0).$ Note also that $\cA u(x)=\frac12 u'' (x), x\neq 0$ for all $u\in \text{Dom}(\cA).$

\section{Limit theorems} \label{sec:Main results}
Let $X_n$ be a regular diffusion on $I_n$ with scale function $s_n$ and speed measure $m_n$, $n \geq 0$. We assume that, in a sense to be specified below, we have convergence $\lim_{n \to \infty} s_n = s_0$, $\lim_{n \to \infty} m_n = m_0$, $\lim_{n \to \infty} I_n = I_0$, and $\lim_{n \to \infty} X_n(0) = X_0(0)$. Under suitable conditions, this yields convergence in distribution of the corresponding diffusions.  To prove these results, we construct copies of $X_n$, $n \geq 0$, defined on a common probability space with the same Brownian motion via representations similar to \eqref{eq:reg_diff}, \eqref{eq:reg_diff_CAF};  see  \eqref{eq:reg_diff_general}, \eqref{eq:reg_diff_CAF_general}  below. The desired convergence in distribution will then follow from almost sure convergence of these coupled copies, which can be established by deterministic real-analysis arguments. Throughout the paper, we consider convergence of processes in the space   $C([0,\infty)),$ equipped with topology of locally uniform convergence; see, for example, \cite{Ethier+Kurtz:1986}.

 \subsection{Open intervals}\label{sec:open}

In this section, we first  consider the case where $I_0 = (\alpha_0, \beta_0)$, so that both  endpoints are inaccessible. We also study  convergence of the processes $X_n$ stopped upon exiting  a fixed compact interval $[a,b] \subset (\alpha_0, \beta_0)$. We will not impose any specific conditions on the inclusion or exclusion of endpoints in $I_n$ for $n \geq 1$.

Let $B_0$ be a  Brownian motion started from $0$ with local time $l_{B_0}(t,y)$, $t \geq 0$, $y \in \mbR$, and let $\xi$ be a random variable independent of $B_0$. Then $B(t) := s(\xi) + B_0(t)$ is a Brownian motion with initial value  $s(\xi)$  and the process $X$  defined below  is a regular diffusion with initial value $\xi$:
\begin{equation}\label{eq:reg_diff_general}
X(t) = s^{-1}\big(s(\xi) + B_0(A^{-1}(t))\big), \quad t \geq 0, 
\end{equation}
where 
\begin{equation*}
A(t) := \int_{I} l_B(t, s(y)) \, m(\di y).
\end{equation*}
Observe that $l_B(t, \tilde{y}) = l_{B_0}(t, \tilde{y} - s(\xi))$ for $t \geq 0$, $\tilde{y} \in \mbR$ a.s. Thus, $A(t)$ can be rewritten as
\begin{equation}\label{eq:reg_diff_CAF_general}
A(t) := \int_{I} l_{B_0}(t, s(y) - s(\xi)) \, m(\di y).
\end{equation}

Let $X_n$ be a regular diffusion on $I_n$ with scale function $s_n$ and speed measure $m_n$, $n \geq 0$. We denote the endpoints of $I_n$ by $-\infty \leq \alpha_n < \beta_n \leq \infty$. Assume that:
\begin{itemize}
    \item[\textbf{A}$_1$.] $\limsup_{n \to \infty} \alpha_n \leq \alpha_0$ and $\liminf_{n \to \infty} \beta_n \geq \beta_0$;
    \item[\textbf{A}$_2$.] for any $x \in (\alpha_0, \beta_0)$, we have the pointwise convergence $\lim_{n \to \infty} s_n(x) = s_0(x)$;
    \item[\textbf{A}$_3$.] $\lim_{n \to \infty} \int_{\alpha_0}^{\beta_0} f \, \di m_n = \int_{\alpha_0}^{\beta_0} f \, \di m_0$ for any continuous function $f$ compactly supported in $(\alpha_0, \beta_0)$. That is, $m_n$ converges vaguely to $m_0$ as $n\to\infty$ on $I_0$; see Appendix for definition and properties of   vague convergence.
\end{itemize}

To illustrate the underlying idea,  first we  present a simple result where $I_n = \mbR$ for all $n \geq 0$. Generally, technical difficulties arise when the processes can hit the boundaries of $I_n$. This case  will be analyzed carefully later in this and subsequent sections.

\begin{thm} \label{thm:open-infinite-intervals}
Assume that $I_n = \mbR$ for all $n \geq 0$, conditions {\textbf{A}$_2$--\textbf{A}$_3$} are satisfied, and $s_n(\pm\infty) = \pm \infty$, $n \geq 0$. Let $B_0$ be a standard Brownian motion and let $\{\xi_n, n \geq 0\}$ be random variables independent of $B_0$ such that $\lim_{n \to \infty} \xi_n = \xi_0$ a.s. Suppose that $\{X_n, n \geq 0\}$ have the representations
\begin{equation}\label{eq:representation X_n}
X_n(t) = s_n^{-1}\big(s_n(\xi_n) + B_0(A_n^{-1}(t))\big), \quad t \geq 0, 
\end{equation}
where 
\begin{equation}\label{eq:representation A_n}
A_n(t) := \int_{I_n} l_{B_0}(t, s_n(y) - s_n(\xi_n)) \, m_n(\di y).
\end{equation}
Then $X_n\to X_0$  almost sure, locally uniformly in time; that is,
\[
\forall T > 0, \quad \lim_{n \to \infty} \max_{z \in [0,T]} |X_n(z) - X_0(z)| = 0.
\]
\end{thm}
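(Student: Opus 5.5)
We fix an $\omega$ in the full-measure event on which $B_0$ is continuous, $l_{B_0}$ is jointly continuous, $\xi_n\to\xi_0$, and the conclusion of Lemma~\ref{lem:properties of A} holds for $A_0$. The argument is then purely deterministic and has three steps: convergence of the additive functionals $A_n\to A_0$, convergence of their inverses $A_n^{-1}\to A_0^{-1}$, and finally convergence of the compositions defining $X_n$.

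\textbf{Step 1: $A_n\to A_0$ locally uniformly.} First, pointwise convergence of the monotone functions $s_n$ to the continuous strictly increasing $s_0$ on $\mbR$ is automatically locally uniform (Dini-type argument for monotone functions, see the Appendix). Together with $\xi_n\to\xi_0$ this gives $s_n(\xi_n)\to s_0(\xi_0)$. Put $g_n(t,y):=l_{B_0}(t,s_n(y)-s_n(\xi_n))$. By uniform continuity of $l_{B_0}$ on compacts, $g_n\to g_0$ uniformly on $[0,T]\times[-K,K]$.

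Fix $T$ and let $M:=\max_{t\le T}|B_0(t)|$. Since $s_0(\pm\infty)=\pm\infty$, choose $K$ with $s_0(K)-s_0(\xi_0)>M+1$ and $s_0(-K)-s_0(\xi_0)<-M-1$. By local uniform convergence the same inequalities (with $M+1/2$) hold for $s_n$ for all large $n$. Hence $g_n(t,\cdot)$ vanishes outside $(-K,K)$ for $t\le T$.

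Then
\[
A_n(t)-A_0(t)=\int (g_n-g_0)\,\di m_n+\Big(\int g_0\,\di m_n-\int g_0\,\di m_0\Big).
\]
The first term is at most $\sup|g_n-g_0|\cdot m_n([-K,K])$, and $m_n([-K,K])$ is bounded by vague convergence (test against a continuous cutoff equal to $1$ on $[-K,K]$). The second term tends to $0$ for each $t$ by \textbf{A}$_3$, since $g_0(t,\cdot)$ is continuous and compactly supported. Uniformity in $t\in[0,T]$ follows because $t\mapsto g_0(t,\cdot)$ is continuous into the sup-norm on compactly supported functions, so an equicontinuity argument applies; alternatively, pointwise convergence of monotone functions to a continuous limit is locally uniform.

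\textbf{Step 2: $A_n^{-1}\to A_0^{-1}$ locally uniformly.} Since $I_0=\mbR$ and $m_0$ charges every open interval, Lemma~\ref{lem:properties of A} shows that $A_0$ is continuous and strictly increasing. It is finite, since $g_0(t,\cdot)$ has compact support and $m_0$ is Radon. It is also unbounded, because $l_{B_0}(t,0)\to\infty$ and $l_{B_0}$ is positive near $0$ for large $t$. So $A_0^{-1}$ is a continuous strictly increasing bijection of $[0,\infty)$.

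For nondecreasing $A_n$ converging locally uniformly to such an $A_0$, the generalized inverses converge locally uniformly (Appendix result on generalized inverses). Concretely, for $\ve>0$ and $t\le T$ we have $A_0(A_0^{-1}(t)+\ve)\ge t+\delta$ and $A_0(A_0^{-1}(t)-\ve)\le t-\delta$ for some $\delta>0$ uniform on compacts. These inequalities transfer to $A_n$ and pin $A_n^{-1}(t)$ within $\ve$ of $A_0^{-1}(t)$.

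\textbf{Step 3: composition.} By Step 2 and uniform continuity of $B_0$ on compacts, $Y_n:=s_n(\xi_n)+B_0(A_n^{-1}(\cdot))\to Y_0$ uniformly on $[0,T]$, with all values in a fixed compact set.

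The inverses $s_n^{-1}$ are increasing bijections of $\mbR$ (because $s_n(\pm\infty)=\pm\infty$). They converge pointwise to the continuous $s_0^{-1}$: if $s_n(x_n)=u$ and $x_n$ had a limit point $x\ne s_0^{-1}(u)$, monotonicity together with \textbf{A}$_2$ gives a contradiction. The convergence is therefore locally uniform by the monotone-function lemma. Since $s_0^{-1}$ is uniformly continuous on compacts,
\[
|s_n^{-1}(Y_n)-s_0^{-1}(Y_0)|\le \sup|s_n^{-1}-s_0^{-1}|+|s_0^{-1}(Y_n)-s_0^{-1}(Y_0)|\to0,
\]
which is the claimed locally uniform convergence $X_n\to X_0$.

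The main obstacle is Step 1. One needs the uniform localization, so that $g_n$ has support in a fixed compact set where $m_n$ has bounded mass; this is exactly where $s_n(\pm\infty)=\pm\infty$ and local uniform convergence of $s_n$ enter. One also needs to combine uniform convergence of $g_n$ with merely vague convergence of $m_n$. The remaining steps are standard facts about monotone functions and their inverses.
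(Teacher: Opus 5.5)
Your proposal is correct and takes the same route as the paper. You localize the integrands $l_{B_0}(t,s_n(\cdot)-s_n(\xi_n))$ in a common compact set using $s_n(\pm\infty)=\pm\infty$ and local uniform convergence of $s_n$, get $A_n\to A_0$ from uniform convergence of the integrands against vaguely convergent measures, invert using strict monotonicity of $A_0$, and conclude with $s_n^{-1}\to s_0^{-1}$ and convergence of compositions, spelling out details the paper leaves to the Appendix lemmas. One small loose spot is your argument that $A_0$ is unbounded: $l_{B_0}(t,0)\to\infty$ plus positivity near $0$ does not by itself force $A_0(t)\to\infty$, since the neighbourhood where $l_{B_0}(t,\cdot)$ is large might shrink; instead use that $X_0$ is an honest diffusion on $\mbR$, so $A_0^{-1}(t)<\infty$ for all $t$, or that $\inf_{|y|\le 1} l_{B_0}(t,y)\to\infty$ as $t\to\infty$.
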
   

\begin{proof}
We work on an event $\Omega_0$ of probability $1$ on which all considered processes and random variables have standard properties; for example, $\lim_{n \to \infty} \xi_n = \xi_0$, $l_{B_0}(t, y)$ is jointly continuous in $(t, y)$, etc. For the remainder of the proof, we fix $\omega\in \Omega_0$.

Recall that if a Brownian motion has not visited a given point, its local time at that point is zero. Since $s_0(\pm\infty) = \pm\infty$, for every $T>0$ the supports of functions  $y \mapsto l_{B_0}(t, s_0(y) - s_0(\xi_0)), t\in[0,T],$ are contained in a common  compact set. Lemmas \ref{lem: conv monot cont} and \ref{lem:conv compositions} yield that $s_n(\xi_n)  \to  s_0(\xi_0)$ as $n \to \infty$. It follows from assumption \textbf{A}$_2$ and monotonicity of $s_n$ that the supports of the functions $y \mapsto l_{B_0}(t, s_n(y) - s_n(\xi_n))$ for $t \in [0,T]$ are contained within a common compact set for all sufficiently large $n$. 
Thus $l_{B_0}(t, s_n(y) - s_n(\xi_n)) \to l_{B_0}(t, s_0(y) - s_0(\xi_0))$ as $n \to \infty$ uniformly in $(t,y) \in [0,T]\times\mbR$.    Lemmas \ref{lem: conv monot cont} and \ref{lem:conv of integrals} imply the locally uniform convergence $A_n \to A_0$ as $n \to \infty$. 

Since $A_0$ is strictly increasing, see Lemma \ref{lem:properties of A}, we obtain the locally uniform convergence of their inverses, $A_n^{-1} \to A_0^{-1}$ (see Corollary \ref{corl:5.1}).  Applying  Corollary \ref{corl:5.1} once again, we get the locally uniform convergence $s_n^{-1} \to s_0^{-1}$.    Lemma \ref{lem:conv compositions} completes the proof.
\end{proof}

Let $\sigma_{x}(X) := \inf\{t \geq 0 : X(t) = x\}$ denote the first hitting time of $x$ by $X$, with the convention that   $\sigma_{x}(X) := \infty$ if the corresponding set is empty. 

In the next theorem,  endpoints $\alpha_0, \beta_0$ may belong  to $I_0.$ The theorem shows that the convergence of scale functions and speed measures implies   convergence of the processes on every time interval lying strictly before the first hitting time of the endpoints of $I_0.$ 

\begin{thm}\label{thm:conv_up_exit}
Suppose that assumptions {\textbf{A}$_1$--\textbf{A}$_3$} are satisfied. Let $B_0$ be a standard Brownian motion, and let $\{\xi_n, n \geq 0\}$ be random variables independent of $B_0$ such that $\lim_{n \to \infty} \xi_n = \xi_0$ a.s. Assume that the processes $\{X_n, n \geq 0\}$ have the representations \eqref{eq:representation X_n}, \eqref{eq:representation A_n}. Then for any $T > 0$, we have the almost sure convergence:
\[
\lim_{n \to \infty} \1_{\{T < \sigma_{\alpha_0}(X_0) \wedge \sigma_{\beta_0}(X_0)\}} \max_{z \in [0,T]} |X_n(z) - X_0(z)| = 0.
\]
\end{thm}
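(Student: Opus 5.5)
Fix $\omega$ in a full-probability event $\Omega_0$ on which $\xi_n\to\xi_0$, $l_{B_0}$ is jointly continuous, and the conclusions of Lemma \ref{lem:properties of A} hold for all the functionals involved. Fix $T>0$ with $T<\sigma_{\alpha_0}(X_0)\wedge\sigma_{\beta_0}(X_0)$; otherwise the indicator vanishes and there is nothing to prove. The plan is to localize everything to a compact subinterval of $(\alpha_0,\beta_0)$ and then repeat the argument of Theorem \ref{thm:open-infinite-intervals}.

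\emph{Step 1: localization.} Since $X_0$ is continuous and $[0,T]$ is compact, $X_0([0,T])\subset[a',b']$ for some $\alpha_0<a'<b'<\beta_0$. Choose $\alpha_0<a<a'$ and $b'<b<\beta_0$. Set $\tau_0:=A_0^{-1}(T)$, so the Brownian path $u\mapsto s_0(\xi_0)+B_0(u)$ stays in $[s_0(a'),s_0(b')]\subset(s_0(a),s_0(b))$ for $u\in[0,\tau_0]$. Here we use that $A_0^{-1}$ is continuous and strictly increasing up to $\tau_0$, because the path stays in the interior (Lemma \ref{lem:properties of A}). Pick $\tau>\tau_0$ close enough that the path still lies in $[s_0(a''),s_0(b'')]$ on $[0,\tau]$, for some $a<a''<a'$ and $b'<b''<b$. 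By \textbf{A}$_1$, for large $n$ we have $[a,b]\subset \mathrm{int}(I_n)$. By \textbf{A}$_2$ and Lemmas \ref{lem: conv monot cont}, \ref{lem:conv compositions}, $s_n\to s_0$ uniformly on $[a,b]$ and $s_n(\xi_n)\to s_0(\xi_0)$. Hence, for large $n$, the path $s_n(\xi_n)+B_0(u)$ stays inside $(s_n(a),s_n(b))$ for $u\in[0,\tau]$. Consequently, on $[0,\tau]$ the local times $y\mapsto l_{B_0}(u,s_n(y)-s_n(\xi_n))$ are supported in a fixed compact $K\subset(a,b)\subset(\alpha_0,\beta_0)$, uniformly in $n$ large.

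\emph{Step 2: convergence of $A_n$ on $[0,\tau]$.} On $[0,\tau]\times K$ the integrands converge uniformly by joint continuity of $l_{B_0}$. They are continuous and supported in $K$, and $m_n\to m_0$ vaguely by \textbf{A}$_3$. Lemma \ref{lem:conv of integrals} (together with Lemma \ref{lem: conv monot cont}) then gives $A_n\to A_0$ uniformly on $[0,\tau]$. No mass of $m_n$ near the endpoints $\alpha_n,\beta_n$ matters, which is exactly why the restriction $T<\sigma_{\alpha_0}\wedge\sigma_{\beta_0}$ is needed.

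\emph{Step 3: inverses and composition.} $A_0$ is continuous and strictly increasing on $[0,\tau]$ (Lemma \ref{lem:properties of A}), and $A_0(\tau)>A_0(\tau_0)=T$. Hence $A_n(\tau)>T$ for large $n$, so $A_n^{-1}(t)\le\tau$ for $t\le T$ and only the values of $A_n$ on $[0,\tau]$ matter. The local version of Corollary \ref{corl:5.1} gives $A_n^{-1}\to A_0^{-1}$ uniformly on $[0,T]$. Thus $B_0(A_n^{-1}(\cdot))\to B_0(A_0^{-1}(\cdot))$ uniformly on $[0,T]$ by uniform continuity of $B_0$ on $[0,\tau]$. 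Finally, $s_n^{-1}\to s_0^{-1}$ uniformly on a neighbourhood of the compact set $[s_0(a''),s_0(b'')]$, again by Corollary \ref{corl:5.1} applied to the monotone functions $s_n$ on $[a,b]$. Lemma \ref{lem:conv compositions} then yields $\max_{[0,T]}|X_n-X_0|\to0$.

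The main obstacle is Step 3. One must ensure that $A_n^{-1}(T)$ does not exceed the window $[0,\tau]$ where the localization is valid, and that the inverse convergence holds even though $A_n$ may have flat pieces or blow up outside that window. Choosing $\tau$ strictly larger than $\tau_0$, and using strict monotonicity of $A_0$ at $\tau_0$, resolves this.
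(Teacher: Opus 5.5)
Your proposal is correct and follows essentially the same route as the paper: localize to a Brownian-time window $[0,\tau]$ with $\tau>A_0^{-1}(T)$ on which the path stays in a compact part of $\mathrm{int}(s_0(I_0))$, get uniform convergence of $A_n$ there from vague convergence, invert via Corollary~\ref{corl:5.1} (your $\tau$ plays the role of the paper's $z_0<z_1$), and compose with the uniformly convergent $s_n^{-1}$. Your explicit check that the integrands are supported in a common compact $K\subset(\alpha_0,\beta_0)$ is, if anything, more careful than the paper's appeal to Corollary~\ref{corl:integrals}; the one step worth rephrasing is the Step~1 claim that $A_0^{-1}$ has no jumps on $[0,T]$, which reads as circular but follows directly from the exit time $\rho$ of $s_0(\xi_0)+B_0$ from $\mathrm{int}(s_0(I_0))$: $A_0$ is continuous and strictly increasing on $[0,\rho)$, and $X_0$ is at the boundary at time $A_0(\rho-)$ whenever this is finite, so $T<A_0(\rho-)$.
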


\begin{proof}
As in the proof of the previous theorem, we confine our discussion to   $\omega\in \Omega_0$ of probability $1$ where all necessary standard sample path properties hold. If $T \geq  \sigma_{\alpha_0}(X_0) \wedge \sigma_{\beta_0}(X_0)$, there is nothing to prove. Hence, assume that $T < \sigma_{\alpha_0}(X_0) \wedge \sigma_{\beta_0}(X_0).$ For notational brevity, without loss of generality we will assume that $s_0(\xi_0)=0.$ Then 
$A_0(t) = \int_{I_0} l_{B_0}(t, s_0(y) ) \, m_0(\di y) $ and $\limn s_n(\xi_n)=0.$

 It follows from the assumption of the theorem that there are $a , b $,  $ s_0(\alpha_0)<a < b <s_0(\beta_0) $ such that   $T < \sigma_{a }\big( B_0(A_0^{-1})\big) \wedge \sigma_{b }\big( B_0(A_0^{-1})\big)$. So $B_0(z)\in (a,b)$ for any $ z \in[0,A_0^{-1}(T )] $, i.e., for any $z$ such that  $A_0(z)\in[0,T ].$ 
 Corollary \ref{corl:integrals} implies that
\[
\lim_{n \to \infty} \sup_{z \in [0,z_0]} |A_n(z) - A_0(z)| = 0,
\]
for any $z_0$ such that $z_0 < \sigma_{a}\big( B_0 \big) \wedge \sigma_{b}\big( B_0 \big).$

Note that $A_0$ is strictly increasing on $[0,z_0]$ for any
$z_0 < \sigma_a(B_0)\wedge\sigma_b(B_0)$, see Lemma
\ref{lem:properties of A}. For such $z_0$, choose $z_1$ such that
\[
z_0<z_1<\sigma_a(B_0)\wedge\sigma_b(B_0).
\]
Since $A_0$ is strictly increasing on $[0,z_1]$, Corollary
\ref{corl:5.1} yields
\[
A_n^{-1}\to A_0^{-1}
\]
uniformly on $[0,A_0(z_0)]$.

Choose $z_0$ such that $A_0^{-1}(T)<z_0< \sigma_{a }\big( B_0 \big) \wedge \sigma_{b }\big( B_0 \big).$ Then $T < A_0(z_0)$. So  $A_n^{-1}\to A_0^{-1}$ uniformly on $[0,T]$.  

Recall that $B_0(A_n^{-1}(t))\in (a,b)$ for $ t \in[0, T] $. Therefore 
$s_n(\xi_n)+B_0(A_n^{-1}(t ))\in (a,b), t\in [0,T]$ if $n$ is sufficiently large (here we used agreement that $\limn s_n(\xi_n)=0.$).

It follows from Corollary \ref{corl:5.1} that $\limn s_n^{-1}=s_0^{-1}$ uniformly on $[a,b].$
 Lemma \ref{lem:conv compositions}  yields the convergence of the compositions:
\begin{equation}\label{eq:conv_proc_uni}
\lim_{n \to \infty} \max_{t \in [0,T]} \big| s_n^{-1}\big(s_n(\xi_n) + B_0(A_n^{-1}(t))\big) - s_0^{-1}\big( B_0(A_0^{-1}(t))\big) \big| = 0.
\end{equation} 
\end{proof}
\begin{corl}
Assume that $I_0$ is an open interval. Then for any $T > 0$, we have the almost sure convergence:
\[
\lim_{n \to \infty} \sup_{t \in [0,T]} |X_n(t) - X_0(t)| = 0.
\]
\end{corl}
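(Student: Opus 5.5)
The corollary should follow from Theorem~\ref{thm:conv_up_exit} once we show that, when $I_0=(\alpha_0,\beta_0)$ is open, the indicator there equals $1$ almost surely for every $T$. That is, we want
\[
\sigma_{\alpha_0}(X_0)\wedge\sigma_{\beta_0}(X_0)=\infty \quad \text{a.s.}
\]
Throughout we keep assumptions \textbf{A}$_1$--\textbf{A}$_3$ and the coupled representations \eqref{eq:representation X_n}, \eqref{eq:representation A_n}. With the identity above, Theorem~\ref{thm:conv_up_exit} gives, for each fixed $T>0$, an event of probability one on which $\max_{[0,T]}|X_n-X_0|\to0$. Intersecting over $T\in\mbN$ then yields the claim simultaneously for all $T$, by monotonicity in $T$.

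It remains to justify the identity. Since $X_0$ is an honest regular diffusion on $I_0$ and takes values in $I_0$ for all $t\ge0$ almost surely, it never equals $\alpha_0$ or $\beta_0$ (when these are finite). By the convention $\inf\emptyset=\infty$, both hitting times are therefore infinite. If $\alpha_0=-\infty$ or $\beta_0=+\infty$, the corresponding hitting time is trivially infinite, because the paths are real-valued and continuous.

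The one point needing care is the case where an endpoint is finite but $s_0$ is unbounded there. This is handled implicitly by the proof of Theorem~\ref{thm:conv_up_exit}. Fix $T$. Since $X_0([0,T])$ is a compact subset of $(\alpha_0,\beta_0)$, the set $B_0(A_0^{-1}([0,T]))$ is a compact subset of $(s_0(\alpha_0+),s_0(\beta_0-))$. Hence levels $a<b$ strictly inside this range exist with $T<\sigma_a\wedge\sigma_b$, which is exactly the input that proof uses.

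I expect no real obstacle. The only subtlety is interpreting the hitting times of non-attained (possibly infinite) endpoints, which the paper's convention $\sigma_x=\infty$ for an empty set settles.
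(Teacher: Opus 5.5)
Your proposal is correct and is essentially the paper's proof. Since $I_0$ is open and $X_0$ is honest on $I_0$, both endpoint hitting times are a.s.\ infinite, so the indicator in Theorem~\ref{thm:conv_up_exit} equals one and the conclusion follows directly. Your final paragraph is not needed, because that theorem is applied as a black box; incidentally, the delicate case inside its proof is an excluded endpoint at which $s_0$ stays bounded, not one at which $s_0$ is unbounded.
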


\begin{proof}
 Since $I_0$ is open, the endpoints are inaccessible, which implies $\sigma_{\alpha_0}(X_0) = \sigma_{\beta_0}(X_0) = +\infty$ a.s. 
The assertion therefore follows directly from Theorem  \ref{thm:conv_up_exit}.
\end{proof}
Let $\tau_{(a,b)}(X) := \inf\{t \geq 0 : X(t) \notin (a,b)\}$, with the usual convention that   $\inf\emptyset:= \infty$. 
\begin{corl}\label{corl:conv}
Let $X_n$ be as in Theorem \ref{thm:conv_up_exit}. Assume that the sequences of real numbers $\{a_n, n \geq 0\}$ and $\{b_n, n \geq 0\}$ satisfy    $\lim_{n \to \infty} a_n = a_0$, and $\lim_{n \to \infty} b_n = b_0$, where $\alpha_0 < a_0 < b_0 < \beta_0$. Then,  almost surely,
\begin{equation}\label{eq:conv_stopping_times}
\lim_{n \to \infty}   \tau_{(a_n,b_n)} (X_n)= \tau_{(a_0,b_0)} (X_0),
\end{equation} 
and the    stopped processes converge uniformly:
\[
\lim_{n \to \infty} \sup_{t \geq 0} |X_n(t \wedge\tau_{(a_n,b_n)} (X_n)) - X_0(t \wedge \tau_{(a_0,b_0)} (X_0))| = 0.
\]
\end{corl}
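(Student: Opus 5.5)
We work pathwise on the full-probability event $\Omega_0$ from the proof of Theorem~\ref{thm:conv_up_exit}. We additionally require that $\omega\in\Omega_0$ satisfy two Brownian path properties, each holding almost surely for every fixed level: $B_0$ crosses each relevant level immediately after hitting it, and $B_0$ does not have a local extremum at that level. The natural reduction is to the Brownian time scale. Set $\tilde a_n:=s_n(a_n)-s_n(\xi_n)$ and $\tilde b_n:=s_n(b_n)-s_n(\xi_n)$. By \textbf{A}$_2$, monotonicity and Lemmas~\ref{lem: conv monot cont}, \ref{lem:conv compositions}, these converge to $\tilde a_0,\tilde b_0$, the analogous quantities for $n=0$. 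Since $s_n$ is strictly increasing, $\tau_{(a_n,b_n)}(X_n)=\inf\{t: B_0(A_n^{-1}(t))\notin(\tilde a_n,\tilde b_n)\}$.

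Let $\theta_n:=\tau_{(\tilde a_n,\tilde b_n)}(B_0)$. The hitting time of an open interval by a Brownian path, with converging endpoints, converges to $\theta_0$. This uses exactly the two properties built into $\Omega_0$. Lower semicontinuity is trivial. Upper semicontinuity uses that after $\theta_0$ the path strictly exits $[\tilde a_0,\tilde b_0]$ in arbitrarily short time. We also need that $\xi_0\in(a_0,b_0)$; otherwise the assertion is trivial, since both stopping times are $0$ for large $n$, or can be handled by the same argument.

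On $[0,\theta_0+\delta]$ with small $\delta$, $B_0$ stays inside some $[a,b]$ with $s_0(\alpha_0)<a<b<s_0(\beta_0)$. This holds because $a_0,b_0$ are interior to $I_0$. Hence Corollary~\ref{corl:integrals} gives $A_n\to A_0$ uniformly on $[0,\theta_0+\delta]$, and Lemma~\ref{lem:properties of A} shows $A_0$ is strictly increasing there. Since $A_n$ is continuous and nondecreasing and $B_0$ does not exit $(\tilde a_n,\tilde b_n)$ before $\theta_n$, we get $\tau_{(a_n,b_n)}(X_n)=A_n(\theta_n)$, and similarly $\tau_{(a_0,b_0)}(X_0)=A_0(\theta_0)$. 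Two facts make this work. First, $A_n$ is strictly increasing just after $\theta_n$, because $B_0$ is in the interior of $s_n(I_n)-s_n(\xi_n)$ there for large $n$ by \textbf{A}$_1$, \textbf{A}$_2$. Second, $A_n^{-1}(A_n(\theta_n))=\theta_n$. Then $\theta_n\to\theta_0$ and uniform convergence of $A_n$ with continuity of $A_0$ give \eqref{eq:conv_stopping_times}.

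For the stopped processes, write $X_n(t\wedge\tau_n)=s_n^{-1}\big(s_n(\xi_n)+B_0(A_n^{-1}(t)\wedge\theta_n)\big)$. By Corollary~\ref{corl:5.1}, $A_n^{-1}\to A_0^{-1}$ uniformly on $[0,A_0(\theta_0+\delta/2)]$, which contains a neighbourhood of $\tau_0$. Hence $A_n^{-1}(t)\wedge\theta_n\to A_0^{-1}(t)\wedge\theta_0$ uniformly on $[0,T_1]$ with $T_1>\tau_0$. For $t\ge T_1$, both expressions are frozen at their stopped values once $n$ is large, since $\tau_n<T_1$ eventually. The Brownian argument lies in the compact $[a,b]$, where $s_n^{-1}\to s_0^{-1}$ uniformly. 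Lemma~\ref{lem:conv compositions} then yields convergence uniformly over all $t\ge0$.

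The main obstacle is the identification $\tau_n=A_n(\theta_n)$ together with the convergence $\theta_n\to\theta_0$. The danger is that $A_n^{-1}$ has a jump, or $B_0$ touches the level without crossing it. I would first handle this through the a.s. crossing property at the fixed levels $\tilde a_0,\tilde b_0$, and then, if needed, through interiority ensuring strict increase of $A_n$ near $\theta_n$.
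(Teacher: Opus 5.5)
Your proof is correct, but it is organized differently from the paper's.

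The paper works at the level of the processes themselves. It gets \eqref{eq:conv_stopping_times} by applying Lemma~\ref{lem:hitting_limit} to the paths $X_n\to X_0$, which converge by Theorem~\ref{thm:conv_up_exit}. For this it notes that $X_0$ crosses $a_0,b_0$ immediately after hitting them. That property is inherited from $B_0$, because $s_0$ is strictly increasing and $A_0^{-1}$ is strictly increasing and continuous before $X_0$ leaves $\mathrm{int}(I_0)$. The uniform convergence of the stopped processes is then read off from the first assertion together with Theorem~\ref{thm:conv_up_exit}.

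You instead stay on the Brownian clock:
\begin{enumerate}
\item you prove $\theta_n\to\theta_0$ for the Brownian exit times;
\item you establish the exact identities $\tau_{(a_n,b_n)}(X_n)=A_n(\theta_n)$ and $A_n^{-1}\bigl(t\wedge\tau_{(a_n,b_n)}(X_n)\bigr)=A_n^{-1}(t)\wedge\theta_n$;
\item you essentially re-run the proof of Theorem~\ref{thm:conv_up_exit} with the stopping built in.
\end{enumerate}

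The paper's route is shorter because it reuses Theorem~\ref{thm:conv_up_exit} as a black box. However, it only sketches three points:
\begin{itemize}
\item the transfer of the crossing property through $A_0^{-1}$;
\item the fact that Lemma~\ref{lem:hitting_limit}, stated for locally uniform convergence on all of $[0,\infty)$, is really used in a localized form, since Theorem~\ref{thm:conv_up_exit} gives convergence only before $X_0$ exits $\mathrm{int}(I_0)$;
\item the small equicontinuity step needed to pass from convergence on $[0,T]$ plus convergence of stopping times to convergence uniform in $t\ge0$.
\end{itemize}
Your route makes all three explicit, and the frozen tail for $t\ge T_1$ is handled cleanly. The cost is repeating some earlier estimates. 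It is the same device, $\tau_{n,+}=A_{n,+}(\text{Brownian hitting time})$, that the paper itself uses later in the proof of Theorem~\ref{thm:limit_SNOdiffosion}.

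Two minor remarks. Your ``no local extremum at the level'' requirement is redundant given the immediate-crossing property. The boundary case $\xi_0\in\{a_0,b_0\}$ does go through by the same argument, since $B_0$ leaves its starting level in both directions immediately.
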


\begin{proof}
Property \eqref{eq:conv_stopping_times} follows directly from Lemma \ref{lem:hitting_limit} due to the sample path properties of Brownian motion, the strict increase of $s_0$ on $\mathrm{int}(I_0)$, and the strict increase of $A_0^{-1}$ before the process $s_0(\xi_0) + B_0(A_0^{-1}(\cdot))$ exits the interval $(s_0(\alpha_0), s_0(\beta_0))$, see Lemma \ref{lem:properties of A}. The second assertion of the corollary follows immediately from the first one and Theorem \ref{thm:conv_up_exit}.
\end{proof}

\begin{corl}\label{corl:conv_weak}
Let the scale functions and speed measures of the regular diffusions $(X_n)$ satisfy conditions {\textbf{A}$_1$--\textbf{A}$_3$}, and let $X_n(0) \Rightarrow X_0(0)$ as $n \to \infty$. Assume that the sequences of numbers $\{a_n, n \geq 0\}$ and $\{b_n, n \geq 0\}$ satisfy the assumptions of Corollary \ref{corl:conv}. Then we have the convergence in distribution:
\[
\left(\tau_{(a_n,b_n)} (X_n), \, X_n(\cdot \wedge \tau_{(a_n,b_n)} (X_n))\right)
\Rightarrow 
\left(\tau_{(a_0,b_0)} (X_0), \, X_0(\cdot \wedge \tau_{(a_0,b_0)} (X_0))\right)
\]
as $n \to \infty$ in $\mbR \times C([0,\infty))$. In particular, if $I_0$ is an open interval, then $X_n \Rightarrow X_0$ as $n \to \infty$ in $C([0,\infty))$.
\end{corl}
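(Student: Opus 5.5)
The plan is to deduce Corollary~\ref{corl:conv_weak} from the almost sure statement of Corollary~\ref{corl:conv}, after putting all processes on one probability space. Convergence in distribution depends only on the laws. So it suffices to build copies $\wt X_n$ with $\wt X_n \stackrel{d}{=} X_n$ for each $n\geq0$ for which the pair $(\tau_{(a_n,b_n)}(\wt X_n), \wt X_n(\cdot\wedge\tau_{(a_n,b_n)}(\wt X_n)))$ converges almost surely in $\mbR\times C([0,\infty))$. Almost sure convergence implies convergence in distribution, which gives the claim. Here $\tau$ may equal $+\infty$; I treat it as an element of $[0,\infty]$ with its usual topology, or equivalently note that the proof of \eqref{eq:conv_stopping_times} yields a finite limit a.s. on $\{\tau_{(a_0,b_0)}(X_0)<\infty\}$.

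\textbf{Step 1 (Skorokhod coupling of the initial values).} Since $X_n(0)\Rightarrow X_0(0)$ and these are real random variables, the Skorokhod representation theorem gives random variables $\xi_n$ on some probability space with $\xi_n\stackrel{d}{=}X_n(0)$ and $\xi_n\to\xi_0$ almost surely. For instance, one can take $\xi_n=F_n^{-1}(U)$ with $U$ uniform on $(0,1)$ and $F_n$ the distribution function of $X_n(0)$. I then enlarge the space to carry a standard Brownian motion $B_0$ independent of $(\xi_n)_{n\geq0}$.

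\textbf{Step 2 (construction of the copies).} I define $\wt X_n$ by \eqref{eq:representation X_n}--\eqref{eq:representation A_n}, using the scale function $s_n$ and speed measure $m_n$ of $X_n$. The key point is that each $\wt X_n$ has the law of $X_n$. Conditionally on $\xi_n=x$, the process $s_n(x)+B_0$ is a Brownian motion started at $s_n(x)$. By Theorem~\ref{thm:reg_diff_inv} (together with Theorem~\ref{thm:representation_diff}), $\wt X_n$ is then the regular diffusion on $I_n$ with scale $s_n$ and speed $m_n$ started at $x$. Uniqueness of the law of a regular diffusion given $(s_n,m_n)$ and the state space $I_n$ then shows that $\wt X_n$, with initial law that of $X_n(0)$, has the law of $X_n$. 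Here I am implicitly using the paper's standing convention that $I_n$ is determined by $(s_n,m_n)$ as in Theorem~\ref{thm:reg_diff_inv}. Because the law of $X_n$ is a mixture over its initial distribution, the independence of $\xi_n$ and $B_0$ is exactly what makes this step work. The measurability of $x\mapsto$ law is standard.

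\textbf{Step 3 (conclusion).} The hypotheses of Corollary~\ref{corl:conv} now hold for $\wt X_n$, so the stopping times converge almost surely and the stopped paths converge uniformly on $[0,\infty)$, and hence locally uniformly. Joint convergence of the pair follows because both components converge along the same $\omega$.

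For the last claim, when $I_0$ is open I use the first corollary after Theorem~\ref{thm:conv_up_exit} in place of Corollary~\ref{corl:conv}, on the same coupling. That corollary gives $\sup_{[0,T]}|\wt X_n-\wt X_0|\to0$ a.s. for every $T$, so $X_n\Rightarrow X_0$ in $C([0,\infty))$.

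The main obstacle I expect is Step~2, the identification of the law of the constructed process with that of $X_n$. This is where boundary behaviour and the inclusion of endpoints in $I_n$ matter. I would handle it through Theorem~\ref{thm:reg_diff_inv} and the remark that the representation holds for arbitrary initial distributions.
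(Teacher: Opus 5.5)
Your proposal is correct and follows the same route as the paper's proof. You couple the initial values via Skorokhod's theorem, attach an independent Brownian motion $B_0$, build the copies as in Theorem~\ref{thm:conv_up_exit}, and pass the almost sure convergence from Corollary~\ref{corl:conv} (or from the corollary for open $I_0$) to convergence in distribution; your extra care in Step~2 about identifying the law of the copies, and about $\tau$ possibly being infinite, only makes explicit what the paper leaves implicit.
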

\begin{proof}
    For the proof, we use Skorokhod's representation theorem and construct copies  $\xi_n$ of $X_n(0), n\geq 0,$ defined on a common probability space such that $\xi_n\to\xi_0,\quad n\to\infty$ almost surely. Then we take a Brownian motion $B_0$ independent of $(\xi_n)$ and  defined on an extension of the new probability space and construct copies of the diffusions $(X_n)$ as in Theorem \ref{thm:conv_up_exit}. For these copies we have a.s. convergence, which implies convergence in distributions of the original processes.
\end{proof}

\subsection{Absorbing boundary}\label{sec:absorbing}

We know from the results of Section \ref{sec:open} that if the scale functions $(s_n)$ and speed measures $(m_n)$ of regular diffusions converge in some sense to the scale function $s_0$ and speed measure $m_0$ of a regular diffusion $X_0$ on $I_0$, then we have  convergence of the diffusions up to the   exit time  from $\mathrm{int}(I_0)$. In this section, we study the behavior of the processes after the moment of hitting the boundary of $I_0$. Specifically, we consider the case where $I_0 = [0,\infty)$ and $X_0$ is stopped upon hitting $0$. According to the classification of boundaries detailed in Remark \ref{remk:classification exit}, this phenomenon of an absorbing boundary can occur only in the following two cases: 

\begin{itemize}
    \item[\textbf{Case 1}] ($0$ is an absorbing point): $m_0((0,1]) < \infty$ and $m_0(\{0\}) = \infty$;
    \item[\textbf{Case 2}] ($0$ is an exit boundary): $m_0((0,1]) = \infty$ and $\int_{(0, 1]} (s_0(x)-s_0(0)) \, m_0(\di x) < \infty$.
\end{itemize}

Assume that $\omega$ is such that $\sigma_0(X_0) := \inf\{t \geq 0 : X_0(t) = 0\}<\infty$. Note that  we must have $A_0(\sigma_{-s_0(\xi_0)}(B_0)-) < \infty$ and $A_0(t) = +\infty$ for $t > \sigma_{-s_0(\xi_0)}(B_0)$ for a.a. of such $\omega$. In Case 1, this property is immediate from the definition, whereas in Case 2, it follows from Remark \ref{remk:classification exit}.

Let $X_n$ be a regular diffusion on $I_n$ with scale function $s_n$ and speed measure $m_n$. We denote the left endpoint of $I_n$ by $\alpha_n$ and the right endpoint by $\beta_n$. These endpoints may either be included in $I_n$ or excluded from it. 

 We  adopt the convention that $s_n(x) := -\infty$ if $x < \alpha_n$, and $s_n^{-1}(y) = \alpha_n$ if $y \leq \inf_{x \in I_n} s_n(x)$. Furthermore, we extend the measures  $m_n$ and their push-forwards $m_n \circ s_n^{-1}$ to the entire real line  by setting $m_n(\mbR \setminus I_n):=0,\quad (m_n \circ s_n^{-1})(\mbR \setminus s_n(I_n)):= 0$.  Since scale functions are defined up to an additive constant, we will assume that  $s_0(0)=0$.

\begin{thm}\label{thm:conv_absorbing}
Assume that the following conditions hold:  
\begin{enumerate}
    \item[\textbf{A}$_1'$.] $\lim_{n \to \infty} \beta_n = +\infty$ and $\lim_{n \to \infty} \alpha_n =: a \in[-\infty, 0]$.
    \item[\textbf{A}$_2'$.] For every $x > a$, the limit $\lim_{n \to \infty} s_n(x) =: s(x) \in [-\infty, \infty]$ exists. Moreover,  if $a < 0$, then $s(x) \in [-\infty, 0)$ for all $x \in (a, 0)$.
    \item[\textbf{A}$_3'$.] The measures $m_n$ converge vaguely to $m_0$ as $n\to\infty$ on $(0,\infty)$.
    \item[\textbf{B}$_1$.] $s_n(X_n(0)) \Rightarrow s_0(X_0(0))$ as $n \to \infty$.
    \item[\textbf{B}$_2$.] $s(x) = s_0(x)$ for all $x > 0$.
    \item[\textbf{B}$_3$.] For every $\delta > 0$, $\lim_{n \to \infty} (m_n \circ s_n^{-1})([-\delta, \delta]) = \infty$.
    
\end{enumerate}

Then the stochastic processes converge in distribution: $X_n \Rightarrow X_0$ as $n \to \infty$ in $C([0, \infty))$.
\end{thm}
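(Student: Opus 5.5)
We follow the coupling strategy of Section~\ref{sec:open}. By \textbf{B}$_1$ and Skorokhod's representation theorem we realize random variables $\eta_n$ with the law of $s_n(X_n(0))$ and $\eta_0$ with the law of $s_0(X_0(0))$ on one probability space, with $\eta_n\to\eta_0$ almost surely. We take an independent standard Brownian motion $B_0$ and set
\[
X_n(t)=s_n^{-1}\big(\eta_n+B_0(A_n^{-1}(t))\big),\qquad A_n(t)=\int_{\mbR} l_{B_0}(t,y-\eta_n)\,(m_n\circ s_n^{-1})(\di y),
\]
which uses the conventions for $s_n^{-1}$ and the extended measures. It then suffices to prove that $X_n\to X_0$ locally uniformly, almost surely. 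We fix a typical $\omega$. On $\{\sigma_0(X_0)=\infty\}$ the argument of Theorem~\ref{thm:conv_up_exit} applies on $(0,\infty)$, using \textbf{A}$_2'$, \textbf{B}$_2$ and \textbf{A}$_3'$, and gives local uniform convergence on every $[0,T]$. (If $0$ is inaccessible from the interior, $I_0=(0,\infty)$ and this is the whole argument.) So the only case left is $\tau:=\sigma_0(X_0)<\infty$. In that case $B_0$ first hits $-\eta_0$ at the time $\theta:=\sigma_{-\eta_0}(B_0)$, and $A_0(\theta-)=\tau$.

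\emph{Step 1: convergence before $\tau$.} For $T<\tau$ Theorem~\ref{thm:conv_up_exit} gives uniform convergence on $[0,T]$. We want this to extend to $[0,\tau]$, and then to continue with a small oscillation after $\tau$. By continuity of $X_0$ at $\tau$ and Corollary~\ref{corl:conv}, for small $\delta>0$ the exit times of $X_n$ from $(s_n^{-1}(\delta'),\infty)$ converge to the exit time of $X_0$ from $(s_0^{-1}(\delta'),\infty)$, uniformly enough in $\delta'\le\delta$. So for large $n$ the process $X_n$ reaches the level $s_n^{-1}(\delta)$, which is close to $s_0^{-1}(\delta)$, at a time close to $\tau$, and $X_n$ stays close to $X_0$ until then.

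\emph{Step 2: trapping after $\tau$ (main obstacle).} We must show that after $\tau$ the process $\eta_n+B_0(A_n^{-1}(\cdot))$ stays in a small neighbourhood $[-\delta,\delta]$ of $0$ for all finite times, and that this neighbourhood maps under $s_n^{-1}$ into a small set near $0$. For the first part we use \textbf{B}$_3$. Immediately after $\theta$ the Brownian motion $B_0$ crosses $-\eta_0$, so for any $h>0$ the local time $l_{B_0}(\theta+h,\cdot)$ is bounded below by some $c>0$ on a neighbourhood of $-\eta_0$. We choose $h$ so small that $B_0$ does not leave $(-\eta_0-\delta/2,-\eta_0+\delta/2)$ during $[\theta,\theta+h]$. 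Since $l_{B_0}$ is jointly continuous and $\eta_n\to\eta_0$, for large $n$ we get
\[
A_n(\theta+h)\ \ge\ c\,(m_n\circ s_n^{-1})([-\delta',\delta'])\ \longrightarrow\ \infty
\]
for a suitable small $\delta'$. Hence $A_n^{-1}(t)<\theta+h$ for every fixed $t$ and all large $n$, uniformly for $t\in[0,T]$. Consequently, for $t$ past the approximate hitting time, $\eta_n+B_0(A_n^{-1}(t))$ lies in $(-\delta,\delta)$. The points of this interval that are nonpositive go under $s_n^{-1}$ to a point in $[\alpha_n,0]$. To keep these close to $0$ we use \textbf{A}$_2'$: if $a<0$, then $s(x)<0$ for $x\in(a,0)$, so for each $x<0$ we have $s_n(x)<-\delta$ eventually, which forces $s_n^{-1}(y)>x$ for $|y|\le\delta$. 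If $a=0$, then $s_n^{-1}(y)\ge\alpha_n\to0$ already does the job. On the positive side we use uniform convergence $s_n^{-1}\to s_0^{-1}$ on $[0,\delta]$, which follows from \textbf{B}$_2$ and Corollary~\ref{corl:5.1}, with continuity of $s_0^{-1}$ at $0$.

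\emph{Step 3: conclusion.} Combining Steps 1 and 2 gives $\limsup_n\sup_{t\le T}|X_n(t)-X_0(t)|\le\varepsilon(\delta)$ with $\varepsilon(\delta)\to0$, since $X_0\equiv0$ after $\tau$. Letting $\delta\to0$ yields almost sure local uniform convergence, and therefore $X_n\Rightarrow X_0$. The delicate part is Step 2. There we must make the lower bound on the local time near $-\eta_0$ uniform in $n$ despite the shift $\eta_n$, and show that $X_n$ cannot escape back to the region near $s_0^{-1}(\delta)$ before time $T$.
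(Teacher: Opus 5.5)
You follow the paper's route: the same coupling through one Brownian motion, Theorem~\ref{thm:conv_up_exit} before absorption, and \textbf{B}$_3$ combined with positivity and joint continuity of Brownian local time at $-\eta_0$ to make $A_n$ explode right after $\theta$. You also control $s_n^{-1}$ on $[-\delta,\delta]$ via \textbf{A}$_2'$ and \textbf{B}$_2$. Evaluating $A_n$ at $\theta+h$ is a fine, slightly simpler substitute for the paper's time $\sigma_{-\delta_1/4}$.

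\textbf{The gap} is at the junction of Steps~1 and~2. Step~2 controls $B_0$ only on $[\theta,\theta+h]$ and gives $A_n^{-1}(t)\le\theta+h$ for $t\le T$. It says nothing about Brownian times between your ``approximate hitting time'' and $\theta$. Step~1 is built on first exit times (Corollary~\ref{corl:conv}). After $\eta_0+B_0$ first hits the level $\delta$, it exceeds $\delta$ again immediately. With positive probability it also makes an excursion to a much higher level before reaching $0$ at time $\theta$. So the claim that $\eta_n+B_0(A_n^{-1}(t))\in(-\delta,\delta)$ for all $t$ past the approximate hitting time is false as stated. This is exactly the ``cannot escape back before time $T$'' issue you leave open. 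Corollary~\ref{corl:conv} gives no uniformity in the level that would rescue it.

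\textbf{The missing idea} is to replace the first hitting time by a last-passage-type reference time and use monotonicity of the generalized inverse. Since $\eta_0+B_0(\theta)=0$, continuity gives a (random) $z_0\le\theta$ with $0\le\eta_0+B_0(z)<\delta/2$ for $z\in[z_0,\theta]$; take $z_0=0$ if $\eta_0=0$.
\begin{itemize}
\item Put $t_n:=A_n(z_0)$. Then $t_n\to A_0(z_0)<\tau$, so Theorem~\ref{thm:conv_up_exit}, applied on some $[0,T']$ with $A_0(z_0)<T'<\tau$, gives uniform convergence on $[0,t_n]$.
\item By the definition of $A_n^{-1}$, $t\ge t_n$ implies $A_n^{-1}(t)\ge z_0$. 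Together with your bound $A_n^{-1}(t)\le\theta+h$, this gives $A_n^{-1}([t_n,T])\subset[z_0,\theta+h]$.
\item On $[z_0,\theta+h]$ we have $|\eta_0+B_0|<\delta/2$. Hence for $|\eta_n-\eta_0|<\delta/2$ the scale process stays in $(-\delta,\delta)$, so $|X_n|\le\varepsilon$ on $[t_n,T]$, while $|X_0|\le\varepsilon$ on $[A_0(z_0),\infty)$.
\end{itemize}
This is how the paper closes the argument.

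A minor point: on the negative side you must first fix $x\in(a,0)$ and then take $\delta<|s(x)|$. As written, you assert $s_n(x)<-\delta$ for every $x<0$ with $\delta$ already fixed, which does not follow from \textbf{A}$_2'$.
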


Before proving the theorem, let us make a few remarks concerning alternative formulations of its assumptions and the necessity of some of them.

\begin{remk}
Assumption \textbf{B}$_3$ is equivalent to $\lim_{n \to \infty} m_n ([s_n^{-1}(-\delta), s_n^{-1}(\delta)]) = +\infty$ for any $\delta > 0$. Under assumptions $\mathbf A_3'$ and  $\mathbf B_2$, condition  $\mathbf B_3$  is also equivalent to $\lim_{n \to \infty} m_n ([s_n^{-1}(-\delta), \delta]) = +\infty$ for every $\delta > 0$. 
\end{remk}
\begin{remk}
    Fix  $x_0 > 0$ such that $m_0(\{x_0\})=0$, and introduce the distribution-like functions:
\begin{gather}\label{eq:distr_functions}
F_n(x) := \begin{cases}
    m_n((x_0, x]), & x \geq x_0, \\
    -m_n((x, x_0]), & x < x_0,
\end{cases} \notag \\
\text{and} \quad \wt{F}_n(x) := \begin{cases}
    (m_n \circ s_n^{-1})((s_n(x_0), x]) = m_n((x_0, s_n^{-1}(x)]), & x \geq s_n(x_0), \\
    -(m_n \circ s_n^{-1})((x, s_n(x_0)]) = -m_n((s_n^{-1}(x), x_0]), & x < s_n(x_0).
\end{cases} 
\end{gather}
Then condition \textbf{A}$_3'$ means that $\lim_{n \to \infty} F_n(x) = F_0(x)$ for any $x > 0$ that is a point of continuity of $F_0$. Under assumption \textbf{A}$_3'$, condition \textbf{B}$_3$ can be written as 
\[
\forall \delta > 0, \quad \lim_{n \to \infty} \wt{F}_n(-\delta) = \lim_{n \to \infty} F_n(s_n^{-1}(-\delta)) = -\infty.
\]
\end{remk}

\begin{remk}
Condition \textbf{B}$_3$ cannot be replaced by 
\[
\forall \delta > 0, \quad \lim_{n \to \infty} m_n([-\delta, \delta]) = \infty
\]
or
\[
\forall \delta > 0, \quad \lim_{n \to \infty} F_n(-\delta) = -\infty.
\]
Indeed, let $I_n = [-\frac{1}{n}, \infty)$, $m_n = \infty \cdot \delta_{-1/n} + \lambda|_{I_n}$, and $s_n(x) = x \1_{\{x \geq 0\}} + n x \1_{\{x \in [-\frac{1}{n}, 0)\}}$ for $n \in \mbN$. Let $I_0 = [0, \infty)$, $m_0 = \infty \cdot \delta_{0} + \lambda|_{I_0}$, and $s_0(x) = x \1_{\{x \geq 0\}}$, where $\lambda$ denotes the Lebesgue measure.
Then $s_n^{-1}(x)= x\1_{x\geq 0}+ \frac{x}{n}\1_{x\in  [-1,0)}, \quad x\geq -1; 
\quad \wt m_n (\di x):= m_n\circ s_n^{-1} (\di x) = 
\infty \cdot \delta_{- 1} +\lambda|_{[0,\infty)}+ n^{-1} \lambda|_{[-1,0)}.$

Assume that $X_n(0) = 1$ for all $n \geq 0$. Then $X_0$ is a  Brownian motion started from $1$ and absorbed at $0$. In particular,
\[
\Pb\big(\exists t_1, t_2 \in [0,1], \ t_1 < t_2 : \ 0 = X_0(t_1) < X_0(t_2) = 1\big)=0.
\]
On the other hand, by the occupation time formula we have  for all $n \geq 1$:
\[
A_n(t)=\int_{s_n(I_n)} l_B(t,y) \, \wt m_n (\di y) = \int_0^t (\1_{B(z)\geq 0}+n^{-1}\1_{B(z)\in (-1,0)}) \di z \leq t,\quad t< \sigma_{-1}(B).
\]
So, $A_n^{-1}(t)\geq t$ and  the processes $X_n$ satisfy the property:
\begin{align*}
\inf_{n \in \mbN} \Pb\big(\exists t_1, t_2 \in [0,1], \ t_1 < t_2 : \ 0 = X_n(t_1) < X_n(t_2) = 1\big) &\geq \\
\Pb_1\big(\exists t_1, t_2 \in [0,1], \ t_1 < t_2 : \ 0=B(t_1)  < B(t_2) = 1, \ B(t) &> -1, \ t \in [0,1]\big) > 0.
\end{align*}
Hence, $X_n$ does not converge to $X_0$ in distribution. 

The counterexample can be modified so that $I_n = [0, \infty)$. Indeed, the shifted diffusions $\frac{1}{n} + X_n(t)$, $t \geq 0$, have the state space $[0, \infty)$ but still fail to converge to $X_0$.
\end{remk}

\begin{remk}
\label{remk:convX and s(X)}If $X_0(0) > 0$ a.s., then assumption \textbf{B}$_1$ is equivalent to $X_n(0) \Rightarrow X_0(0)$ as $n \to \infty$. Indeed, by Skorokhod's representation theorem, without loss of generality we may assume that initial conditions converge almost surely. Since $s_n(x) \to s_0(x)$ locally uniformly for $x > 0$, the convergence $\lim_{n \to \infty} X_n(0) = X_0(0)$ a.s. directly implies  $\lim_{n \to \infty} s_n(X_n(0)) = s_0(X_0(0))$ a.s. On the other hand, Corollary \ref{corl:5.1} implies the locally uniform convergence of the inverse functions $s_n^{-1}(y) \to s_0^{-1}(y)$ for $y > 0$. Thus, assumption \textbf{B}$_1$ yields $\lim_{n \to \infty} X_n(0) = \lim_{n \to \infty} s_n^{-1}\big(s_n(X_n(0))\big) = s_0^{-1}\big(s_0(X_0(0))\big) = X_0(0)$ a.s.

However, if $X_0(0) = 0$ a.s., the equivalence no longer holds, and the statement of the theorem can fail if we merely assume that $X_n(0) \Rightarrow X_0(0)$.  

Indeed, let $I_0 = [0, \infty)$, $m_0 = \infty \cdot \delta_{0} + \lambda|_{I_0}$, and $s_0(x) = x \1_{\{x \geq 0\}}$. Then $X_0(t) = 0$ for all $t \geq 0$. Set $I_n = \mbR$, $m_n = n \cdot \delta_{0} + \lambda$,  
\[
s_n(x) = x \1_{\{x \geq 0\}} + n x \1_{\{x \in [-\frac{1}{n}, 0)\}} + \left(-1 + \frac{x + \frac{1}{n}}{1 - \frac{1}{n}}\right) \1_{\{x < -\frac{1}{n}\}},
\]
and   choose the initial states to be $X_n(0) = -\frac{1}{n}$ for $n \geq 2$. Then $X_n(0) \to X_0(0) = 0$ deterministically, but $\limn s_n(X_n(0))=-1\neq 0=s_0(X_0(0))$.
Observe that $A_n(t)\leq t$ whenever $B(s)\in (-2,0), s\in [0,t].$  Then   
\[
\Pb(\min_{0\leq t\leq 1} X_n(t)\leq -1) =
\Pb_{-1}(\min_{0\leq t\leq 1} s_n^{-1}(B(A_n^{-1}(t)))\leq -1)=\]
\[
\Pb_{-1}(\min_{0\leq t\leq 1}   B(A_n^{-1}(t)) \leq -2) 
\geq \Pb_{-1}\big(\sigma_{-2}(B) < \sigma_0(B),\quad \sigma_{-2}(B)\leq 1\big) >0,
\]
which implies that $X_n$ cannot converge to $X_0 \equiv 0$ in distribution.
\end{remk}

\begin{proof}[Proof of Theorem \ref{thm:conv_absorbing}]
By Skorokhod's representation theorem, there exist random variables $\xi_n \overset{d}{=} X_n(0)$, $n \geq 0$, defined on a common probability space such that $\lim_{n \to \infty} s_n(\xi_n) = s_0(\xi_0)$ almost surely. We can construct copies of the processes $X_n$, $n \geq 0$ (denoted by the same symbols), with initial conditions $\xi_n$ using a representation analogous to that in Theorem \ref{thm:conv_up_exit} with the same Brownian motion $B_0$. Then, to establish convergence in distribution, it suffices to show that for any fixed $T > 0$:
\begin{equation}\label{eq: uniform convergence of processes}
\lim_{n \to \infty} \max_{t \in [0,T]} |X_n(t) - X_0(t)| = 0 \quad \text{a.s.}
\end{equation}

Since $(\xi_n)$ are independent of $B_0$,   we may condition on $(\xi_n)$ and fix a realization  $x_n := \xi_n$, $n \geq 0$ such that $s_n(x_n)\to s_0(x_0)$. So, without loss of generality we assume that $(x_n)$ are  non-random. Throughout the proof, we confine our analysis to sample points $\omega$ belonging to a fixed set of probability $1$ where the Brownian motion $B_0$ exhibits its standard sample path properties (e.g., the joint continuity of its local time). All subsequent convergences are considered for such $\omega$ only. Furthermore, we implicitly assume that all parameters such as $T$, $\ve$, and $\delta$ are restricted to rational numbers, which avoids any uncountability issues when intersecting sets of full measure.  

Denote $y_n := s_n(x_n) - s_0(x_0) \to 0$ as $n \to \infty$, and let $\wt{m}_n := m_n \circ s_n^{-1}$.

For notational brevity, it is convenient to assume that instead of  the representation \eqref{eq:reg_diff_general}, the processes $X_n$ are  constructed using the same Brownian motion $B_{s_0(x_0)}$ started from the point $s_0(x_0)$:
\begin{equation}\label{eq:repr20}
X_n(t) = s_n^{-1}\big(y_n + B_{s_0(x_0)}(A_n^{-1}(t))\big), \quad t \geq 0, 
\end{equation}
where 
\begin{equation}\label{eq:repr21}
A_n(t) = \int_{s_n(I_n)} l(t, y - y_n) \, \wt{m}_n(\di y),
\end{equation}
and $l(t, x)$ denotes the local time of $B_{s_0(x_0)}$ in $x$ up to  time $t$ .

It follows   from Theorem \ref{thm:conv_up_exit} that for any $T < \sigma_0(X_0)$, we have the uniform convergence:
\begin{equation}
    \label{eq:750}
    \lim_{n \to \infty} \max_{t \in [0, T]} |X_n(t) - X_0(t)| = 0 .  
\end{equation}
Hence, to complete the proof of the theorem, it remains to analyze the case where $\sigma_0(X_0) < T$.

Recall  that $s_0(0) = 0$.
\begin{lem}\label{lem:scale at 0}
For any $\ve > 0$, there exist $\delta > 0$ and $n_0 \in \mbN$ such that for all $n \geq n_0$:
\begin{equation}\label{eq:ineq_scale0}
-\ve \leq \inf_{x \in s_n(I_n) \cap [-\delta,\delta]} s_n^{-1}(x) \leq \sup_{x \in s_n(I_n) \cap [-\delta,\delta]} s_n^{-1}(x) \leq \ve.
\end{equation}
\end{lem}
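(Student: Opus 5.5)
Since every $s_n^{-1}$ is nondecreasing, the infimum and supremum in \eqref{eq:ineq_scale0} are attained, up to closure, at the ends of $s_n(I_n)\cap[-\delta,\delta]$. Here we use the convention $s_n^{-1}(y)=\alpha_n$ for $y\le\inf s_n$. So it suffices to prove two things for all large $n$: first, $s_n^{-1}(\delta)\le\ve$; second, either $\alpha_n\ge-\ve$ or $s_n(-\ve)\le-\delta$. If $s_n(-\ve)\le -\delta$, every $x\in s_n(I_n)$ with $x\ge-\delta$ satisfies $s_n^{-1}(x)\ge -\ve$. If $\alpha_n\ge-\ve$, the lower bound is trivial because $s_n^{-1}$ takes values in $\bar I_n\subset[\alpha_n,\infty)$. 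Without loss of generality take $\ve<1$ and $\ve$ such that $s_0(\ve)>0$.

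\emph{Upper bound.} By $\mathbf B_2$ and $\mathbf A_2'$, $s_n(\ve)\to s_0(\ve)>0$. Choose $\delta< s_0(\ve)/2$. For large $n$ we then have $s_n(\ve)>\delta$, and monotonicity gives $s_n^{-1}(x)\le\ve$ for all $x\le\delta$ in $s_n(I_n)$. We must also handle the case where $\ve\notin I_n$. This cannot happen for large $n$: $\alpha_n\to a\le0<\ve$ and $\beta_n\to\infty$ by $\mathbf A_1'$.

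\emph{Lower bound.} Suppose first that $a<-\ve$. Then $-\ve\in(a,0)$, and $\mathbf A_2'$ gives $s_n(-\ve)\to s(-\ve)\in[-\infty,0)$. Any $\delta<|s(-\ve)|/2$ (any $\delta$ if $s(-\ve)=-\infty$) yields $s_n(-\ve)<-\delta$ for large $n$. Since $-\ve\in(\alpha_n,\beta_n)$ for large $n$, this is a genuine value of $s_n$. If instead $a\ge-\ve$, i.e. $a\in[-\ve,0]$, then $\alpha_n\to a$. For large $n$ this gives $\alpha_n\ge -2\ve$, and applying the argument with $\ve/2$ in place of $\ve$ gives the bound. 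Choose $\delta$ as the minimum of the two constraints and $n_0$ as the maximum.

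\emph{Expected obstacle.} The main subtlety is the boundary bookkeeping rather than any analysis. One must check that the extended conventions make $s_n^{-1}$ monotone on all of $[-\delta,\delta]$ and that the infimum over $s_n(I_n)\cap[-\delta,\delta]$ really cannot undershoot $\alpha_n$. One must also treat separately the possibility that $s(x)=-\infty$ for $x<0$, which only helps. Note that neither $\mathbf B_3$ nor $\mathbf A_3'$ is needed here. The lemma is purely a statement about the scale functions and uses only $\mathbf A_1'$, $\mathbf A_2'$ and $\mathbf B_2$.
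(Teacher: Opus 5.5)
Your proposal is correct and follows essentially the same route as the paper. The upper bound comes from $s_n(\ve)\to s_0(\ve)>0$; the lower bound comes either from $\alpha_n\to a$ being close to $0$, or from a point of $(a,0)$ near $0$ where $\lim_n s_n<0$. The paper avoids your borderline case $a=-\ve$ by choosing the point $y=\frac a2\vee\left(-\frac\ve2\right)$, which is exactly what your halving of $\ve$ accomplishes.
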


\begin{proof}
Since $\lim_{n \to \infty} s_n(x) = s_0(x)$ for all $x > 0$ and $s_0(0) = 0$, there is $\delta>0$ such that  $s_n(\ve)>\delta$ for sufficiently large $n.$ Hence, $s_n^{-1}(x)\leq \ve $ for $x\leq \delta.$

The first inequality in \eqref{eq:ineq_scale0} is trivial if $a = 0$. If $a < 0$, then by the assumptions, we have $\lim_{n \to \infty} s_n(y) = s(y) < 0$ for any $y \in (a, 0)$.   Set  $y := \frac{a}{2}\vee \frac{-\ve}{2}.$ Decreasing  $\delta>0$  if necessary, we may assume that $s(y)<-\delta$. Then  $s_n(y) < -\delta$ for sufficiently large $n$ and, consequently,  $s_n^{-1}(-\delta) > y > -\ve$. The monotonicity of $s_n^{-1}$  yields the proof.
\end{proof}

For a given $\ve > 0$, let $\delta > 0$ be chosen according to Lemma \ref{lem:scale at 0}. We assume that $\omega$ exhibits typical sample path properties and that $\sigma_0(B_{s_0(x_0)}) < \infty$ for this $\omega$. If $x_0=0$, set $z_0:=0.$ Otherwise, if $x_0>0$, we choose a time $z_0 \in [0, \sigma_0(B_{s_0(x_0)}))$ such that 
\[
0 \leq \sup_{z \in [z_0, \sigma_0(B_{s_0(x_0)})]} B_{s_0(x_0)}(z) < \frac{\delta}{2}.
\]
In either case, we choose  $\delta_1 \in (0, \delta)$ such that, after the instant $z_0$, the Brownian motion $B_{s_0(x_0)}$ hits $-\delta_1$ before hitting $\delta/2$. This last requirement means that $\sigma_{-\delta_1}(B_{s_0(x_0)})$ occurs strictly before the first moment after $z_0$ when $B_{s_0(x_0)}$ hits $\delta/2$.

Set $t_n := A_n(z_0)$. It follows from our assumptions that $\lim_{n \to \infty} t_n = t_0 := A_0(z_0)$ and, for all sufficiently large $n$, we have the inequality $|y_n + B_{s_0(x_0)}(A_n^{-1}(t_n))| < \delta/2$. Furthermore, after the moment $t_n$, the process $y_n + B_{s_0(x_0)}(A_n^{-1}(t))$ either hits $-\delta_1/2$ before hitting  $\delta$, or   hits neither of these points.

Denote   $\tau_n:=\inf\{ t\geq t_n : X_n(t)\notin  ( {s_n^{-1}(-\delta_1/2)} , \ve)\}.$ In all cases, Lemma \ref{lem:scale at 0} implies that 
\[
\sup_{t \geq t_0} |X_0(t)| \leq \ve \quad \text{and} \quad \limsup_{n \to \infty} \sup_{t \in [t_n, \tau_n ]} |X_n(t)| \leq \ve.
\]
If $x_0>0$, combining this with the uniform convergence  for any $T < \sigma_0(X_0)$ from \eqref{eq:750}, we obtain
\[
\limsup_{n \to \infty} \sup_{t \in [0, \tau_n]} |X_n(t) - X_0(t)| \leq 2\ve.
\]
If $x_0=0,$ the same estimate follows directly from the preceding bound, since \(t_n=t_0=0\) and \(X_0\equiv0\).

Therefore, to complete the proof of the theorem, it remains to show that 
\[
\lim_{n \to \infty} \tau_n= 
+\infty \quad \text{a.s.}
\]
For this purpose, it is sufficient to check that $\lim_{n \to \infty} A_n(\sigma_{-\delta_1/4}(B_{s_0(x_0)})) = +\infty$, which is equivalent to
\[
\lim_{n \to \infty} \int_{s_n(I_n)} l\big(\sigma_{-\delta_1/4}(B_{s_0(x_0)}), y - y_n\big) \, \wt{m}_n(\di y) = +\infty.
\]
 
Since $s_0(x_0) \geq 0$, the local time $l\big(\sigma_{-\delta_1/4}(B_{s_0(x_0)}), 0\big) > 0$ is strictly positive for a typical $\omega$. It follows from the joint continuity of the local time that there exist a (random) threshold $\wt{\delta} \in (0, \delta_1/4)$ and a constant $c > 0$ such that $l\big(\sigma_{-\delta_1/4}(B_{s_0(x_0)}), y\big) > c$ whenever $|y| \leq \wt{\delta}$. Therefore, for all sufficiently large $n$, we have $l\big(\sigma_{-\delta_1/4}(B_{s_0(x_0)}), y - y_n\big) \geq c$ for $|y| \leq \wt{\delta}/2$, which by condition \textbf{B}$_3$ implies:
\[
\begin{aligned}
\lim_{n \to \infty}
\int_{s_n(I_n)} l\big(\sigma_{-\delta_1/4}(B_{s_0(x_0)}), y - y_n\big) \, \wt{m}_n(\di y)
&\geq \lim_{n \to \infty} c \, \wt{m}_n\big(s_n(I_n) \\
&\qquad \cap [-\wt{\delta}/2, \wt{\delta}/2]\big) = +\infty.
\end{aligned}
\]
The theorem is proved.

\end{proof}

\subsection{Reflecting endpoint}\label{sec:reflecting}

In this section, we consider the case where $X_0$ is a regular diffusion on $I_0 = [0, \infty)$ with scale function $s_0$ and speed measure $m_0$. We assume that $X_0$ can hit $0$ with positive probability and can subsequently enter $(0, \infty)$. According to the classification of boundaries, this behavior is possible if and only if $m_0([0, 1]) < \infty$ and $s_0(0) > -\infty$.

Let $X_n$ be a regular diffusion on $I_n$ with scale function $s_n$ and speed measure $m_n$. We denote the left endpoint of $I_n$ by $\alpha_n$ and the right endpoint by $\beta_n$. These endpoints may either be included in $I_n$ or excluded from it. Recall the notation $y_n := s_n(X_n(0)) - s_0(X_0(0))$ and $\wt{m}_n := m_n \circ s_n^{-1}$.

Similarly to the previous section, we extend the measures $\wt{m}_n$ to the entire real line by setting  $\wt{m}_n(\mbR \setminus s_n(I_n)) := 0$. Without loss of generality, we assume that $s_0(0) = 0$ and adopt the convention that $s_n(x) := -\infty$ for $x < \alpha_n$, $n \geq 0$, and in particular, $s_0(x)=-\infty$ for $ x<0$.

\begin{thm}\label{thm:regular point}   
Suppose that the following assumptions are satisfied: 
\begin{enumerate}
    \item[\textbf{A}$'_1$.] $\lim_{n \to \infty} \beta_n = +\infty$ and $\lim_{n \to \infty} \alpha_n =: a \in[-\infty, 0]$;
    \item[\textbf{A}$''_2$.] for any $x \neq 0$, $\lim_{n \to \infty} s_n(x) = s_0(x)$;
    \item[\textbf{A}$''_3$.] the measures $\wt{m}_n$ converge vaguely to $\wt{m}_0$ as $n \to \infty$ on $\mbR$;  
    \item[\textbf{B}$_1$.] $s_n(X_n(0)) \Rightarrow s_0(X_0(0))$ as $n \to \infty$.
\end{enumerate}
Then the stochastic processes converge in distribution: $X_n \Rightarrow X_0$ as $n \to \infty$ in $C([0, \infty))$.
\end{thm}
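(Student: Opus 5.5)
We follow the scheme of the proofs of Theorems \ref{thm:open-infinite-intervals} and \ref{thm:conv_absorbing}. By \textbf{B}$_1$ and Skorokhod's representation theorem we realize $\xi_n\overset{d}{=}X_n(0)$ with $y_n:=s_n(\xi_n)-s_0(\xi_0)\to0$ a.s. We condition on $(\xi_n)$ and take a single Brownian motion $B:=B_{s_0(x_0)}$ with local time $l$. We then write $X_n(t)=s_n^{-1}(y_n+B(A_n^{-1}(t)))$ with $A_n(t)=\int_{\mbR} l(t,y-y_n)\,\wt m_n(\di y)$, as in \eqref{eq:repr20}--\eqref{eq:repr21}. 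For the limit, $X_0(t)=s_0^{-1}(B(A_0^{-1}(t)))$, where $A_0(t)=\int_{[0,\infty)} l(t,y)\,\wt m_0(\di y)$ and $s_0^{-1}(y)=0$ for $y\le 0$. It suffices to prove $\max_{[0,T]}|X_n-X_0|\to0$ pathwise for a.e.\ $\omega$.

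\emph{Step 1: convergence of $A_n$.} For fixed $z_1$, the functions $y\mapsto l(z,y-y_n)$, $z\in[0,z_1]$, are continuous, uniformly compactly supported, and converge uniformly in $(z,y)$ to $l(z,y)$. By \textbf{A}$''_3$ (vague convergence on all of $\mbR$), Lemma \ref{lem:conv of integrals} and Lemma \ref{lem: conv monot cont} give $A_n\to A_0$ locally uniformly on $[0,\infty)$. Vague convergence on the whole line matters here: it rules out mass escaping near $0$ from the left, as in the counterexamples following Theorem \ref{thm:conv_absorbing}. Since $\wt m_0((-\infty,0))=0$, and $\wt m_0([0,1])<\infty$ because $0$ is regular, $A_0$ is finite and continuous but has flat pieces exactly on the Brownian excursions below $0$.

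\emph{Step 2 (main obstacle): convergence after the time change despite flat pieces of $A_0$.} Here $A_0^{-1}$ jumps at each $t$ where $A_0$ is flat on a maximal interval $[z_-,z_+]$, with $B<0$ on $(z_-,z_+)$ and $B(z_\pm)=0$. Corollary \ref{corl:5.1} only gives $A_n^{-1}(t)\to A_0^{-1}(t)$ at continuity points, and only graph-type (Skorokhod $M_1$-like) convergence globally. The plan is to prove convergence of the composition $t\mapsto B(A_n^{-1}(t))$ in a form sufficient after applying $s_n^{-1}$.

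\begin{itemize}
\item For $t$ near a jump time, the liminf and limsup of $A_n^{-1}(t_n)$ lie in $[z_-,z_+]$, by monotonicity and the local uniform convergence of $A_n$.
\item On $[z_-,z_+]$ we have $B\le 0$ and $B$ is close to $0$ only near the endpoints. Hence $y_n+B(A_n^{-1}(t_n))$ takes values either near $0$ or at some negative level $\le -\eta$ reached during the excursion.
\end{itemize}

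The key lemma is the analogue of Lemma \ref{lem:scale at 0}: for every $\ve>0$ there is $\delta>0$ with $s_n^{-1}(x)\le\ve$ for all $x\le\delta$ and $n$ large. This follows from \textbf{A}$''_2$ at $x=\ve>0$. Also $s_n^{-1}(x)\ge \alpha_n\vee s_n^{-1}(-\ve')$. For the lower bound, \textbf{A}$''_2$ at negative $x$ gives $s_n(x)\to s_0(x)=-\infty$, so $s_n^{-1}(y)\ge x$ for any fixed $y$ and any $x<0$, eventually, uniformly on compacts in $y$. Therefore $s_n^{-1}\to s_0^{-1}$ locally uniformly on all of $\mbR$, with limit $0$ on $(-\infty,0]$; here we use Corollary \ref{corl:5.1} on $(0,\infty)$ plus monotonicity. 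Consequently, whatever value $y_n+B(A_n^{-1}(t_n))$ takes during an excursion below $0$, its image under $s_n^{-1}$ is within $\ve$ of $0=X_0(t)$. Only finitely many excursions of $B$ below $-\delta$ occur before $A_0^{-1}(T)+1$. Between these excursions, $A_0$ is strictly increasing whenever $B>0$, and any remaining flat pieces correspond to excursions of depth less than $\delta$, which are handled by the same bound. We therefore cover $[0,T]$ by finitely many intervals. On each of them we either have uniform convergence of $A_n^{-1}$ (Corollary \ref{corl:5.1}) together with Lemma \ref{lem:conv compositions}, or both $X_n$ and $X_0$ are $\ve$-close to $0$.

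\emph{Step 3.} Combining these pieces gives $\limsup_n\max_{[0,T]}|X_n-X_0|\le 2\ve$ for every rational $\ve>0$. We check the boundedness of the times involved, $A_n^{-1}(T)\le A_0^{-1}(T)+1$ for large $n$, using the strict increase of $A_0$ after $A_0^{-1}(T)$; this follows from Lemma \ref{lem:properties of A}, since $B$ visits $(0,\infty)$ immediately. Returning to the original processes as in Corollary \ref{corl:conv_weak} yields $X_n\Rightarrow X_0$. The expected main difficulty is the jump analysis in Step 2, specifically controlling $A_n^{-1}$ inside a flat interval of $A_0$ uniformly in $t$; the local uniform convergence of $s_n^{-1}$ to $0$ on the negative half-line is what resolves it.
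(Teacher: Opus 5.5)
Your ingredients are the right ones: the local uniform convergence $A_n\to A_0$, the locally uniform convergence $s_n^{-1}\to s_0^{-1}$ with $s_0^{-1}\equiv0$ on $(-\infty,0]$, and your observation that for $t_n\to t$ all limit points of $A_n^{-1}(t_n)$ lie in $[A_0^{-1}(t-),A_0^{-1}(t)]$. The gap is in how Step 2 assembles them: the finite cover you describe does not exist.

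Cutting $[0,T]$ at the finitely many excursions of $B$ below $-\delta$ leaves intervals on which $B$ still has infinitely many shallow negative excursions, accumulating at its zeros. On such an interval $A_0$ has infinitely many flat pieces and $A_0^{-1}$ has infinitely many jumps, so Corollary \ref{corl:5.1} gives no uniform convergence of $A_n^{-1}$ there. Nor is $X_0$ uniformly close to $0$ there, since it makes macroscopic positive excursions. So neither of your two alternatives holds on these intervals.

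Saying that the shallow flat pieces are ``handled by the same bound'' works only one jump at a time, with an $n_0$ that depends on the jump. You give no uniformity over infinitely many jumps, and that uniformity is exactly the difficulty. In fact the depth of the negative excursions is irrelevant, since $s_n^{-1}$ maps all of $[-R,\delta]$ into $[-\ve,\ve]$. The cut has to be made at small \emph{positive} levels.

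This is what the paper does. It chooses $\varkappa>0$ with $|s_n^{-1}|\le\ve$ on $[-R,2\varkappa]$ for large $n$, and alternates $b_k=\inf\{t\ge a_k: B(A_0^{-1}(t))\le\varkappa/3\}$ and $a_{k+1}=\inf\{t>b_k: B(A_0^{-1}(t))\ge 2\varkappa/3\}$. Only finitely many $a_k$ lie in $[0,T]$.
\begin{itemize}
\item On $[a_k,b_k]$ the Brownian path stays $\ge\varkappa/3$, so $A_0$ is strictly increasing near $[A_0^{-1}(a_k),A_0^{-1}(b_k)]$, and $A_n^{-1}\to A_0^{-1}$ uniformly there.
\item On $[b_k,a_{k+1}]$, monotonicity and convergence of $A_n^{-1}$ at the endpoints (which are not jump times of $A_0^{-1}$) give $B(A_n^{-1}(t))\le\varkappa$ uniformly in $t$, hence $|X_n(t)|\le\ve$.
\end{itemize}
The paper then concludes from uniform equicontinuity of $(X_n)$ together with convergence on a dense set of times.

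Alternatively, your own first bullet closes the proof by a compactness argument, with no decomposition at all. Suppose $|X_n(t_n)-X_0(t_n)|\ge\eta$ along a subsequence with $t_n\to t$. Every limit point $z^*$ of $A_n^{-1}(t_n)$ lies in $[A_0^{-1}(t-),A_0^{-1}(t)]$. By Lemma \ref{lem:properties of A}, $B\le0$ on this interval whenever it is nondegenerate, so $s_0^{-1}(B(z^*))=X_0(t)$ in every case. Hence $X_n(t_n)\to X_0(t)$ along further subsequences, while $X_0(t_n)\to X_0(t)$ by continuity, a contradiction.

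In either version, first reduce, as the paper does, to $s_0$ unbounded above. This makes $s_0^{-1}$ finite on $\mbR$ and $A_0$ finite and unbounded. Without it, your bound $A_n^{-1}(T)\le A_0^{-1}(T)+1$ and the claimed local uniform convergence of $s_n^{-1}$ on all of $\mbR$ need modification.
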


\begin{remk} Let $F_n, \wt F_n$ be from \eqref{eq:distr_functions}.
    Condition \textbf{A}$''_{3}$  is equivalent to the   $\limn F_n(x)=F_0(x)$ for any $x>0$ such that $x$ is a point of continuity of $F_0 $
and   $  \limn \wt F_n (y)=  \limn F_n\circ s_n^{-1}(y)=F_0(0-), y<0.$

   Note that  vague convergence $m_n\to m_0$  on $\mbR$ together with \textbf{A}$''_2$ would imply   \textbf{A}$''_{3}$   because $\limn s_n (x)=-\infty, x<0 $ and therefore $\limn s^{-1}_n (x)=0.$ However, the convergence $m_n\to m_0$  is not equivalent to \textbf{A}$''_{3}$.  Indeed, take $s_n(x)=x\1_{x>0}+n^2x\1_{x<0}$ and $m_n=\lambda + n\delta_{-\frac1n}.$ Then \textbf{A}$''_{3}$ is true but $  \limn  m_n=  
m_0 $ is not. The same example with $X_n(0)=-\frac1n $ also shows us that assumption \textbf{B}$_1$ can't be replaced with   $X_n(0)\Rightarrow X_0(0)$ if $X_0(0)=0 $ with positive probability. On the other hand, see Remark \ref{remk:convX and s(X)}, if $X_0(0)>0$ a.s., convergence $X_n(0)\Rightarrow X_0(0)$ is equivalent to \textbf{B}$_1$. 
\end{remk}
\begin{remk}
If the conditions of the theorem are satisfied and $\lim_{n \to \infty} \alpha_n = a < 0$, then the time-changed processes $B_{s_0(x_0)} \circ A_n^{-1}$ need not converge to $B_{s_0(x_0)} \circ A_0^{-1}$ in distribution on the space of continuous functions. 
Indeed, let $s_0(x)=x, x\geq 0,\quad m_0(\di x)=\1_{x\geq 0}\di x,\quad x_0=0;$ that is,  $X_0$ is a reflected Brownian motion. Assume that $I_n=\mbR, \quad s_n(x)=x\1_{x\geq 0}+ nx\1_{x<0}, \quad m_n(\di x)=  \di x, \quad x_n=0, \quad n\in \mbN.$ Then $\wt m_n(\di x)= (n^{-1}\1_{x<0}+\1_{x\geq 0})\di x$ and by the occupation time formula $A_n(t)=\int_{\mbR} l_{B_0}(t,y) \wt m_n(\di y) =  \int_0^t (\1_{B_0(s)\geq 0}+n^{-1} \1_{B_0(s)<0} ) \di s\leq t. $   Note that $A_n^{-1}:[0,\infty)\to[0,\infty)$ is a continuous bijection. Hence, the path of $B_0\circ A_n^{-1}$ attains the same values as the path of \(B_0\), although at different times. Moreover, since $A_n(t)\leq t$, we have $ A_n^{-1}(t)\geq t$, and therefore,  $\inf_{t\in [0,1]} B_0\circ A_n^{-1}(t)\leq \inf_{t\in [0,1]} B_0(t).$  Hence, 
\[
\inf_{n\in\mbN}\Pb(\exists t\in [0,1] :  B_{s_0(x_0)} \circ A_n^{-1}(t)=-1) \geq 
\Pb(\exists z\in [0,1] :  B_{0}(z) =-1 )>0.
\]
This proves that the statement $B_{s_0(x_0)} \circ A_n^{-1}\Rightarrow B_{s_0(x_0)} \circ A_0^{-1}$ in $C([0,\infty))$ is incorrect.

\end{remk}

\begin{proof}[Proof of Theorem \ref{thm:regular point}]
    We may assume that the processes $X_n$ admit the representation \eqref{eq:repr20}--\eqref{eq:repr21} and that $\lim_{n \to \infty} y_n = 0$ almost surely. Recall also that our analysis is confined to a fixed set of probability $1$ where all sample paths exhibit standard properties, so we shall omit   ``a.s."   throughout the proof. Without loss of generality, we will also assume that $s_0$ is unbounded from above. Indeed, if it is bounded, we can consider the modified scale functions $s_{R,n}(x) := s_n(x) \1_{\{x < R\}} + (x - R + s_n(R)) \1_{\{x \geq R\}}, x\in I_n,$ for all $n\geq 0$ and subsequently let $R \to +\infty$ and observe that the processes $X_n$ and $X_{R,n}$ coincide up to their first exit time from the set $I_n \cap \{x \leq R\}$. 
The assumption that $s_0$ is unbounded from above ensures that $s_0^{-1}(x)$ is finite for all $x \in \mbR$. This simplifies some technical details. Moreover, unboundedness of $s_0$ implies that $A_0$ is unbounded, and hence $A_0^{-1}(t)$ is finite for all $t \in [0, \infty)$.

The local time $l(z, y)$ is continuous with respect to $(z,y)$ and is equal to 0 if the Brownian motion has not visited $y$ before time $z$. Thus,  Corollary \ref{corl:integrals}  yields that we have the locally uniform convergence $A_n \to A_0$. Note, however, that the limiting function $A_0$ is not strictly increasing; it is constant during the excursions of the underlying Brownian motion into the negative half-line.  Consequently, its generalized inverse $A_0^{-1}$ is discontinuous, which makes the convergence of the processes $X_n$ nontrivial.  Nevertheless, Lemma  \ref{lem: conv monot cont} and  Lemma \ref{lem: inverses}(6) imply convergence  $\limn A_n^{-1}(t) = A_0^{-1}(t) $ for all $t$ from a   set  dense in $[0,\infty).$

Let $T > 0$ be fixed. Corollary \ref{corl:5.1} implies the locally uniform convergence of the inverse scale functions:
\begin{equation}\label{eq:s-1=uniform conv}
\lim_{n \to \infty} \max_{x \in [-R, R]} |s_n^{-1}(x) - s_0^{-1}(x)| = 0
\end{equation}
for any $R > 0$.

Therefore, 
$$\lim_{n \to \infty} X_n(t) = \limn s_n^{-1}\big(y_n + B_{s_0(x_0)}(A_n^{-1}(t))\big)= s_0^{-1}\big( B_{s_0(x_0)}(A_0^{-1}(t))\big)=X_0(t),$$ for all $t$ in  a dense subset of $[0, T]$. Thus, to complete the proof of the theorem, it suffices to show that the sequence of functions $(X_n)_{n \geq 1}$ is uniformly equicontinuous on $[0, T]$ for the fixed  $\omega$.

  Choose $z_0 > 0$ such that $A_0(z_0) > T$. Our assumptions imply that $\lim_{n \to \infty} A_n(z) = A_0(z)$ for any $z > 0$; hence, $A_n(z_0)$ is finite for all sufficiently large $n$. Without loss of generality, we restrict our consideration to the case where $A_n(z_0)$ is finite for all $n \in \mbN$, as the general case can be treated similarly with minor technical modifications.

Let $\ve > 0$ be fixed. Since $s_0^{-1}(x) = 0$ for all $x \leq 0$, the locally uniform convergence \eqref{eq:s-1=uniform conv} implies that for any $R > 0$, there exists a constant $\varkappa = \varkappa(\ve, R) > 0$ such that $|s_n^{-1}(x)| \leq \ve$ for all $x \in [-R, 2\varkappa]$ and 
\[
\max_{x \in [-R, R]} |s_n^{-1}(x) - s_0^{-1}(x)| \leq \ve
\] 
for all sufficiently large $n$. 

For all sufficiently large $n$, $A_n(z_0)>T$, and hence $A_n^{-1}(t)\leq z_0, t\in[0,T].$  Let $R:=\max_{z\in[0,z_0]}|B(z)|+1.$ Then for sufficiently large $n $ we have the inequality
$|y_n+B(A_n^{-1}(t))|\leq R, t\in [0,T]$  and   $|X_n(t)|\leq  \ve$ whenever  $y_n+B(A_n^{-1}(t))\leq 2\varkappa, t\in[0,T]$. 

Set
\begin{align}
    a_0 &:= 0, \quad b_k := \inf\{t \geq a_k : B(A_0^{-1}(t)) \leq \varkappa/3\}, \\
    a_{k+1} &:= \inf\{t > b_k : B(A_0^{-1}(t)) \geq 2\varkappa/3\}, \quad k \geq 0.
\end{align}

Note that:
\begin{itemize}
    \item $B(A_0^{-1}(t)) \leq 2\varkappa/3$ if $t \in (b_k, a_{k+1})$ and $B(z) \leq 2\varkappa/3$ if $z \in (A_0^{-1}(b_k), A_0^{-1}(a_{k+1}))$,
    \item $B(A_0^{-1}(t)) \geq \varkappa/3$ if $t \in (a_k, b_{k})$ and $B(z) \geq \varkappa/3$ if $z \in (A_0^{-1}(a_k), A_0^{-1}(b_{k}))$,
    \item there are only  finitely many  indices $k$ such that $a_k \leq T$,
    \item $A_0$ is strictly increasing and continuous on small neighborhoods of any $[A_0^{-1}(a_k), A_0^{-1}(b_k)]$, $k \geq 1$.
\end{itemize}

Corollary \ref{corl:5.1} implies that we have the uniform convergence $A_n^{-1} \to A_0^{-1}$ as $n \to \infty$ on each interval $[a_k, b_k]$, $k \geq 1$ (and similarly on $[a_0, b_0]$ if $b_0 \neq 0$). Hence, we can find a $\delta > 0$ such that $|X_n(t) - X_n(s)| < \ve$ for sufficiently large $n$ and any $s, t \in [0, T]$ satisfying $|t - s| < \delta$, provided that $s$ and $t$ belong to the same interval $[a_k, b_k]$.

Since there are only finitely many indices $k$ such that  $a_k\leq T $, the continuity of $B$ and the convergence of $A_n^{-1}$ at the endpoints $b_k$ and $a_{k+1}$ imply that, for all sufficiently large $n$,  $A_n^{-1}(t)\in[A_n^{-1}(b_k), A_n^{-1}(a_{k+1})], t\in [b_k, a_{k+1}]$, for every such $k$. Thus 
 $ B(A_n^{-1}(t))\le\varkappa,  t\in[b_k,a_{k+1}].$

Assume that $\delta > 0$ is chosen as above, $0 \leq s \leq t \leq T$, $t - s < \delta$, with $s \in [a_k, b_k]$ and $t \in [a_l, b_l]$ for some $k < l$. Then, by splitting the trajectory, we obtain:
\[
|X_n(t) - X_n(s)| \leq |X_n(t) - X_n(a_l)| + |X_n(a_l)| + |X_n(s) - X_n(b_k)| + |X_n(b_k)| \leq 4\ve.
\]

If $s \in [b_k, a_{k+1}]$, $t \in [b_l, a_{l+1}]$,  and $|y_n|<\varkappa,$ then $y_n+(B(A_n^{-1}(s))\vee B(A_n^{-1}(t)))\leq 2\varkappa$, yielding:
\[
|X_n(t) - X_n(s)| \leq |X_n(t)| + |X_n(s)| \leq 2\ve.
\]
All other combinations of intervals can be treated similarly, which implies $|X_n(t) - X_n(s)| \leq 4\ve$ for all $s, t \in [0, T]$ with $|t - s| \leq \delta$, where $|y_n|<\varkappa$. Since all $(X_n)$ are continuous we may again decrease $\delta>0$ if necessary and obtain the inequality for all $n\geq 1.$

Since $\ve > 0$ was arbitrary, 
this proves the uniform equicontinuity of the sequence $(X_n)_{n \geq 1}$ on $[0, T]$, and therefore completes the proof.

\end{proof}

\subsection{snapping-out diffusions}\label{sec:SNOD}

In the previous subsections, we proved the convergence in distribution of diffusions under the assumption that their scale functions and speed measures converge to those of the limiting diffusion, supplemented by natural additional assumptions regarding the behavior of scales and speed measures in neighborhoods of the endpoints of $I_0$. In this section, we investigate the case where the limits of the scales functions and speed measures exist in a certain sense on $(-\infty, 0)$ and $(0, \infty)$, but the limiting scale function possesses a jump at $0$. Under natural assumptions, we will prove that the limit process is a \textit{snapping-out diffusion}, which generalizes the snapping-out Brownian motion introduced and studied from different perspectives in \cite{bobrowski2012pde, lejay2016snapping, mandrekar2016brownian}.

Informally, a snapping-out diffusion $X$ with switching at $0$ operates as follows. Let $X_-$ and $X_+$ be regular diffusions on $(-\infty, 0]$ and $[0, \infty)$, respectively. To distinguish the two copies of the boundary point \(0\), we will sometimes denote the endpoint of $(-\infty, 0]$ by $0-$ and the endpoint of $[0, \infty)$ by $0+$. We assume that $0\pm$ are regular boundary points of $X_\pm$, meaning that $X_\pm$ can hit $0\pm$ with positive probability and can exit from $0\pm$ into the interior of the corresponding interval.  

We construct a strong Markov process $X$ on the disjoint union $(-\infty, 0-] \sqcup [0+, \infty)$ that behaves as $X_+$ if $X(0) \in [0+, \infty)$ until the instant when its local time at $0+$ reaches an independent exponentially distributed random variable $\xi_+ \sim \text{Exp}(\lambda_+)$. If that   moment is finite, the process is reborn at $0-$ and subsequently behaves as $X_-$, independently of the past, until its local time at $0-$ reaches an independent exponentially distributed random variable $\xi_- \sim \text{Exp}(\lambda_-)$. Then it is reborn at $0+$, and the construction is repeated.

Below, we provide a formal construction of this process and clarify the specific choice of local time at $0$ under consideration.

The distribution $\Pb_x$ of $X$ given $X(0) = x \in [0+, \infty)$ is formally constructed via the concatenation of independent paths (the case where $X(0) = x \in (-\infty, 0-]$ is treated analogously). Namely, let $(X^n_\pm)_{n \geq 1}$ be independent copies of $X_\pm$ starting from $0\pm$, respectively, and let $X_+^0$ be a copy of $X_+$ independent of $(X^n_\pm)_{n \geq 1}$ with $X_+^0(0) = x$. We assume that all these diffusions admit the representation \eqref{eq:reg_diff} with independent  Brownian motions $B^n_\pm$, and that the local times $L_\pm^n(t, 0\pm)$ of $X_\pm^n$ at $0\pm$ are determined via \eqref{eq:int_loc_time_X0}. Furthermore, let $(\xi^n_+)_{n \geq 0}$ and $(\xi^n_-)_{n \geq 1}$ be independent exponentially distributed random variables with parameters $\lambda_\pm$, respectively, which are also independent of all the aforementioned diffusions and Brownian motions. Set $\sigma_\pm^n := \inf\{t \geq 0 : L^n_\pm(t, 0\pm) \geq \xi_\pm^n\}$.

For $n \geq 1$, let $\mathcal{T}_n^{-} := \sum_{i=0}^{n-1}\sigma_+^i + \sum_{i=1}^{n-1}\sigma_-^i$ and $\mathcal{T}_n^{+} := \sum_{i=0}^{n-1}\sigma_+^i + \sum_{i=1}^{n}\sigma_-^i$, with the convention that an empty sum equals $0$.

Define the process $X(t)$ by concatenating  parts of trajectories  of $(X_\pm^n)$ as follows:
\begin{equation}
    X(t) := \begin{cases}
        X^0_+(t), & t \in [0, \sigma^0_+), \\
        X^{n}_-(t - \mathcal{T}_n^{-}), & t \in [\mathcal{T}_n^{-}, \, \mathcal{T}_n^{+}), \quad n\geq 1,\\
        X^{n}_+(t - \mathcal{T}_n^{+}), & t \in [\mathcal{T}_n^{+}, \, \mathcal{T}_{n+1}^{-}), \quad n\geq 1.
    \end{cases}
\end{equation}
 If some \(\sigma_\pm^n=\infty\), the concatenation stops at that stage and no further switching occurs.

If initial point $x=X(0)\in(-\infty, 0-]$, then $(X(t))$ is defined similarly.  It follows from the general results on path concatenation (see, e.g., \cite{ikedaconc}) that $(X, \Pb_x)$ defines a strong Markov process on $(-\infty, 0-] \sqcup [0+, \infty)$.

\begin{defn}
The process $(X(t))_{t \geq 0}$ constructed above is called the \textit{snapping-out diffusion} with the interface located at $0$, scale functions $s_\pm$, speed measures $m_\pm$, and transmission intensities $\lambda_\pm$. In particular, if $s_\pm(x)=x,\, \pm x\geq 0, \quad m_\pm(\di x)=\di x,\, \pm x\geq 0$, then the snapping-out diffusion is called the \textit{snapping-out Brownian motion}.
\end{defn}

\begin{remk}
    \label{remk:SNOD-different representation}
    Note that  $s_\pm, m_\pm$ for  $X_\pm$ from the definition of the snapping-out diffusion, and consequently their local times at 0, are not determined uniquely. Indeed, let $c_\pm>0$ be constants. Then $\wt s_\pm(x):=c_\pm s_\pm(x), \wt m_\pm(\di x):=c_\pm^{-1} m_\pm(\di x)$ are scales and speed measures of the same diffusions. By \eqref{eq:loc time = limit}, the corresponding local times satisfy
   \[
 \wt L_\pm(t,0\pm)=c_\pm L_\pm(t,0\pm).
  \]
   Hence, a  {snapping-out diffusion} with   scale functions $s_\pm$, speed measures $m_\pm$, and transmission intensities $\lambda_\pm$ have the same distribution as  {snapping-out diffusion} with   scale functions $\wt s_\pm$, speed measures $\wt m_\pm$, and transmission intensities $\wt \lambda_\pm:= c^{-1}_\pm \lambda_\pm.$ 
    
    Choosing $c_-/c_+=\lambda_-/\lambda_+$, we may always assume that the transmission intensities are equal. Moreover, if we take $c_\pm=\lambda_\pm$, then  $\wt \lambda_\pm=1.$
\end{remk}
\begin{remk}
By abuse of terminology, the canonical projection of the process $X$ onto $\mbR$ (identifying $0-$ and $0+$ with $0$) will also be referred to as a snapping-out diffusion. 
\end{remk}

\begin{remk}\label{remk:boundary SNOD} 
The boundary conditions for the generator  of the process $X$ at the interfaces $0\pm$ are given by the system:
\begin{equation}\label{eq:SNOD-boundary conditions}
    \begin{cases}
        \frac{\mathrm{d} u(0+)}{\mathrm{d} s_+} = 2 m_+(\{0+\}) \cA_+ u(0+) + 2\lambda_+ \big(u(0+) - u(0-)\big), \\
        \frac{\mathrm{d} u(0-)}{\mathrm{d} s_-} = -2 m_-(\{0-\}) \cA_- u(0-) + 2\lambda_- \big(u(0+) - u(0-)\big).
    \end{cases}
\end{equation}
Indeed, consider for example the point $0+.$  First, we  recall classical  calculations of Dynkin's characteristic operator (which coincides with the generator) for an  elastic diffusion at $0+$, that is, for the process $X_+$ that is killed when the local time   at $0+$ reaches an independent exponentially distributed random variable $\xi_+$ with parameter 
 $\lambda_+$; see \cite[\S 4.5, formula (5b)]{ItoMcKean65}. We then explain the slight difference between elastic and snapping-out diffusions.

Denote $\sigma_\ve:=\inf\{s\geq0:X_+(s)\geq\ve\}$ 
and $\sigma_\infty:=\inf\{s\geq0:L_{X_+}(s,0+)\geq\xi_+\},$ where $\sigma_\infty$ is the killing time.
 
Dynkin's characteristic operator equals
\[
\cA_+^{elastic}u(0+)
=
\lim_{\ve\to0+}
\frac{
\E_{0+}\!\left[
u(X_+(\sigma_\ve))
\1_{\{\sigma_\ve<\sigma_\infty\}}
\right]-u(0+)}
{\E_{0+}(\sigma_\ve\wedge\sigma_\infty)}.
\]

After elementary transformations, one obtains, see \cite[\S 4.5, formula (10)]{ItoMcKean65}, that
   \begin{align}
 \cA_+^{elastic} u(0+) 
\frac{\E_{0+} (\sigma_\ve\wedge\sigma_\infty) \Pb_{0+}(\sigma_\ve<\sigma_\infty)^{-1}}{(s_+(\ve)-s_+(0+)) }= \\
\frac{u(\ve)-u(0+)}{s_+(\ve)-s_+(0+)} -  
\frac{  {\Pb_{0+}(\sigma_\ve<\sigma_\infty)^{-1}-1} }{s_+(\ve)-s_+(0+)} u(0+) +o(1),\ \ve \to0+.
\label{eq:elastic_BC}
\end{align} 
 
It is known, see \cite[\S 4.5, formula (11a), (11b)]{ItoMcKean65}, that 
\[
\begin{aligned}
\frac{  {\Pb_{0+}(\sigma_\ve<\sigma_\infty)^{-1}-1} }{s_+(\ve)-s_+(0+)}
&=2\lambda_++o(1)\quad \text{and}\\
\frac{\E_{0+} (\sigma_\ve\wedge\sigma_\infty) \Pb_{0+}(\sigma_\ve<\sigma_\infty)^{-1}}{s_+(\ve)-s_+(0+) }
&=2m_+(\{0+\})+o(1),
\end{aligned}
\]
which yields immediately the boundary condition for the elastic diffusion:
 \[
\frac{\mathrm{d} u(0+)}{\mathrm{d} s_+} = 2 m_+(\{0+\}) \cA^{elastic}_+ u(0+) + 2\lambda_+   u(0+).
\]
Here   the factor 2 appears because the definition of the generator differs by a factor of 2 in this paper and in \cite{ItoMcKean65}.  

 For the snapping-out diffusion\footnote{We are grateful to Adam Bobrowski who suggested this argument.}, killing at $\sigma_\infty$ is replaced by an instantaneous rebirth at $0-$. Therefore, in the calculation above, the contribution  $0$ corresponding to the cemetery state is replaced by  $u(0-)$  and we obtain one extra term $\frac{  {\Pb_{0+}(\sigma_\ve<\sigma_\infty)^{-1}-1} }{s_+(\ve)-s_+(0+)} u(0-)$ in the right-hand side of the analogue of \eqref{eq:elastic_BC}. Consequently,
$$ \frac{\mathrm d u(0+)}{\mathrm d s_+} = 2m_+(\{0+\})\cA_+u(0+) + 2\lambda_+\bigl(u(0+)-u(0-)\bigr). $$

The boundary condition at $0-$ is obtained analogously.
\end{remk}

The main result of this section is formulated in the following theorem. Similarly to the previous sections use the  convention concerning the  extension of   $s_\pm$ and $m_\pm$   to the whole real  $\mbR$.

\begin{thm}\label{thm:limit_SNOdiffosion}
Let $(X_n)_{n \geq 1}$ be a sequence of regular diffusions on $I_n = \mbR$ with scale functions $s_n$ and speed measures $m_n$ for $n \geq 1$. Assume that there exist strictly increasing, continuous functions $s_+ \colon [0, \infty) \to \mbR$ and $s_- \colon (-\infty, 0] \to \mbR$ and locally finite measures $m_\pm$ on $[0, \infty)$ and $(-\infty, 0]$, respectively, such that:
\begin{enumerate}
    \item $s_\pm$ and $m_\pm$ are the scale functions and speed measures of regular diffusions on $[0, \infty)$ and $(-\infty, 0]$, respectively.
    \item $\lim_{n \to \infty} s_n(x) = s_-(x)$ for $x < 0$, $\lim_{n \to \infty} s_n(x) = s_+(x)$ for $x > 0$, and $s_-(0) < s_+(0)$.
    \item for any continuous, compactly supported functions $f_\pm$ on $\mbR$ with $\mathrm{supp}(f_-) \subset (-\infty, s_+(0))$ and $\mathrm{supp}(f_+) \subset (s_-(0), \infty)$, we have   convergence $\lim_{n \to \infty} \int_{\mbR} f_\pm \, \mathrm{d}\wt{m}_n = \int_{\mbR} f_\pm \, \mathrm{d}\wt{m}_\pm$, where $\wt{m}_n := m_n \circ s^{-1}_n$ and $\wt{m}_\pm := m_\pm \circ s^{-1}_\pm$. Equivalently, we have the vague convergence $\limn  \wt  m_n = \wt m_-$ on $(-\infty, s_+(0))$ and $\limn  \wt  m_n = \wt m_+$ on $( s_-(0), \infty)$.
    \item $X_n(0) \Rightarrow x$ as $n \to \infty$, where $x \neq 0$.
\end{enumerate}
Then the sequence of processes $(X_n)_{n \geq 1}$ converges in distribution in $C([0,\infty))$ to the snapping-out diffusion starting from $x$ with the interface  at $0$, scale functions $s_\pm$, speed measures $m_\pm$, and symmetric transmission intensities 
\[
\lambda_- = \lambda_+ = \frac{1}{2\big(s_+(0) - s_-(0)\big)}.
\]
\end{thm}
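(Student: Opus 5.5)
We couple all $X_n$ through a single Brownian motion, exactly as in the proofs of Theorems \ref{thm:conv_absorbing} and \ref{thm:regular point}. By Skorokhod's theorem we may take $\xi_n\to x$ a.s. with $x>0$; the case $x<0$ is symmetric. We then set
$X_n(t)=s_n^{-1}(y_n+B(A_n^{-1}(t)))$ with $B$ started from $s_+(x)$, where $y_n=s_n(\xi_n)-s_+(x)\to0$ and $A_n(t)=\int l(t,y-y_n)\,\wt m_n(\di y)$. Write $\Delta:=s_+(0)-s_-(0)>0$.

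The guiding picture is that, in the scale $s_n$, the interval $(s_-(0),s_+(0))$ carries asymptotically no $\wt m_n$-mass and is a gap that $s_n^{-1}$ collapses to a neighbourhood of $0$. Indeed, by assumption 3 with $f$ supported in the gap, $\wt m_n$ converges vaguely to $0$ on $(s_-(0),s_+(0))$. By monotonicity and assumption 2, $s_n^{-1}\to s_\pm^{-1}$ locally uniformly on $(s_+(0),\infty)$, respectively $(-\infty,s_-(0))$, by Corollary \ref{corl:5.1}, and $s_n^{-1}\to0$ uniformly on $[s_-(0),s_+(0)]$ (argue as in Lemma \ref{lem:scale at 0}). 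Consequently, $A_n$ converges locally uniformly (Corollary \ref{corl:integrals}) to
\[
A_\infty(t)=\int_{(-\infty,s_-(0)]}l(t,y)\,\wt m_-(\di y)+\int_{[s_+(0),\infty)}l(t,y)\,\wt m_+(\di y).
\]
Here one must check that the atoms of $\wt m_\pm$ at $s_\pm(0)$ are captured correctly. The assumption on supports of $f_\pm$ extending past $s_\mp(0)$ is exactly what gives this, since vague convergence on $(s_-(0),\infty)$ includes a neighbourhood of $s_+(0)$.

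The limit $\hat X:=\Phi(B(A_\infty^{-1}))$, with $\Phi=s_+^{-1}$ on $[s_+(0),\infty)$, $\Phi=s_-^{-1}$ on $(-\infty,s_-(0)]$ and $\Phi=0$ on the gap, is then identified with the snapping-out diffusion. Time spent by $B$ in the gap is erased by $A_\infty^{-1}$. Cut the path of $B$ at the successive crossing times of the gap: after $B$ reaches $s_+(0)$, record the first time it hits $s_-(0)$, then the next time it hits $s_+(0)$, and so on. Between crossings, $B$ restricted to $[s_+(0),\infty)$ and time-changed by the $\wt m_+$ part of $A_\infty$ is, by the reflected representation \eqref{eq:reg_diff_refl}--\eqref{eq:reg_diff_CAF_refl} (Itô--McKean \S 5.5), a copy of $X_+$ reflected at $0+$; its local time at $0+$ equals $l(\cdot,s_+(0))$ by \eqref{eq:int_loc_time_X0}. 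The crossing from $s_+(0)$ to $s_-(0)$ occurs when this local time reaches the local time at $s_+(0)$ accumulated by $B$ before hitting $s_-(0)$. By the Ray--Knight theorem, or directly by excursion theory for $B$ at level $s_+(0)$, that local time is exponential with mean $2\Delta$ in the symmetric normalization. Its rate is therefore $1/(2\Delta)$, and it is independent of the reflected part by the independence of excursions above and below the level. The strong Markov property of $B$ at the crossing times yields the independent concatenation structure of the definition, with $\lambda_\pm=1/(2\Delta)$. This is presumably the content of Theorem \ref{thm:SNO_representation}, which I would prove along these lines.

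The main obstacle is pathwise convergence $X_n\to\hat X$ in $C([0,T])$, because $A_\infty$ is flat on gap excursions and $A_\infty^{-1}$ jumps. I would mimic the equicontinuity argument of Theorem \ref{thm:regular point}. First, convergence on a dense set of times follows from Lemma \ref{lem: inverses}(6). Then split $[0,T]$ into finitely many intervals on which $B(A_\infty^{-1})$ stays at distance at least $\varkappa/3$ from the gap, where $A_\infty$ is strictly increasing and Corollary \ref{corl:5.1} gives uniform convergence. On the complementary intervals, $B(A_n^{-1})$ stays within $\varkappa$ of $[s_-(0),s_+(0)]$, so $|X_n|\le\ve$ by the uniform smallness of $s_n^{-1}$ there. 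The delicate point is ensuring that $B(A_n^{-1})$ does not wander far on such intervals even though $A_n$ is only approximately flat during gap crossings. The fact that $\wt m_n$ of the gap tends to $0$ gives this, exactly as in Theorem \ref{thm:regular point}. Finally, $x\neq0$ is used to obtain $y_n\to0$ from $\xi_n\to x$, as in Remark \ref{remk:convX and s(X)}.
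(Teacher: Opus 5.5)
Your proposal is correct in outline, but it runs in the opposite order from the paper.

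\textbf{The paper's route.} The paper never forms the global limit $A_\infty$. By a diagonal argument it chooses $x_{n,-}<x_{n,+}\to0$ with $s_n(x_{n,\pm})\to s_\pm(0)$ and $m_n([x_{n,-},x_{n,+}])\to0$. It then introduces the reflected diffusions $X_{n,+}$ on $[x_{n,-},\infty)$ and $X_{n,-}$ on $(-\infty,x_{n,+}]$, gets $X_{n,\pm}\Rightarrow X_\pm$ from Theorem~\ref{thm:regular point}, and shows $X_{n,+}(\cdot\wedge\tau_{n,+})\to X_+(\cdot\wedge\tau_+)$ with $\tau_+=A_+(\tau^B(s_-(0)))$ via Lemma~\ref{lem:hitting_limit}. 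It identifies this stopped process with $X_+$ stopped when its local time at $0+$ reaches an independent $\mathrm{Exp}\bigl(1/(2(s_+(0)-s_-(0)))\bigr)$ level, using the same excursion argument you sketch (first for reflected Brownian motion, then via the It\^o--McKean time change). The full statement is then asserted by iterating with the strong Markov property, and Theorem~\ref{thm:SNO_representation} is \emph{deduced} from the limit theorem afterwards.

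\textbf{Your route.} You use assumption 3 plus a partition of unity to get vague convergence $\wt m_n\to\wt m_-+\wt m_+$ on all of $\mbR$, so one coupling gives $A_n\to A_\infty$. You prove Theorem~\ref{thm:SNO_representation} directly: one excursion computation per crossing, plus the strong Markov property of $B$ at the successive hitting times of $s_\mp(0)$. You then obtain convergence on all of $[0,T]$ by rerunning the equicontinuity argument of Theorem~\ref{thm:regular point} with a two-sided gap. This works because $\Phi$ collapses $[s_-(0),s_+(0)]$ to $0$, and $B(A_\infty^{-1})$ jumps only between $s_+(0)$ and $s_-(0)$.

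\textbf{What each buys.} The paper reuses Theorem~\ref{thm:regular point} and Corollary~\ref{corl:conv} as black boxes and confines the new work to identifying one switching time. However, its concatenation step is only sketched: convergence of switching times and of post-switch positions in scale, which is what the diagonal choice of $x_{n,\pm}$ secures. Your route gives almost sure convergence of a single coupling for all times with no iteration, and makes the representation the central step. The price is that you must verify by hand the independence and concatenation structure of the globally time-changed process.

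\textbf{Two small corrections.} First, on the near-gap intervals $[b_k,a_{k+1}]$, what keeps $y_n+B(A_n^{-1})$ within $\varkappa$ of the gap is not the smallness of $\wt m_n$ on the gap; that enters only through $A_n\to A_\infty$. It is the monotonicity of $A_n^{-1}$, its convergence at $b_k$ and $a_{k+1}$ (continuity points of $A_\infty^{-1}$, since $B$ is there at positive distance from the gap), and the fact that $B$ itself stays within $2\varkappa/3$ of $[s_-(0),s_+(0)]$ on $[A_\infty^{-1}(b_k),A_\infty^{-1}(a_{k+1})]$. Second, as in Theorem~\ref{thm:regular point}, you need a localization when $s_+$ or $s_-$ is bounded.
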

\begin{proof}
Since $\wt m_+((-\infty, s_+(0))) = \wt m_-(( s_-(0),\infty))=0,$ both limiting measures assign zero mass to the interval $ (s_-(0), s_+(0)).$ By a diagonal argument, we can therefore choose sequences   $(x_{n,-})$ and $(x_{n,+})$ such that 
\[
\begin{aligned}
x_{n,-} &< x_{n,+}, \quad \lim_{n \to \infty} x_{n,\pm} = 0, \\
\lim_{n \to \infty} s_n(x_{n,-}) &= s_-(0), \quad
\lim_{n \to \infty} s_n(x_{n,+}) = s_+(0),
\end{aligned}
\]
and $\lim_{n \to \infty} m_n([x_{n,-}, x_{n,+}]) = \lim_{n \to \infty} \wt m_n([s_n(x_{n,-}), s_n(x_{n,+})]) = 0$. 

Denote by $X_{n,-}$ the regular diffusion on $(-\infty, x_{n,+}]$ and by $X_{n,+}$ the regular diffusion on $[x_{n,-}, \infty)$, whose scales and speed measures are the restrictions of $s_n$ and $m_n$ to the corresponding intervals, respectively. Let $X_-$ and $X_+$ be the regular diffusions on $(-\infty, 0]$ and $[0, \infty)$, respectively, with scale functions $s_\pm$ and speed measures $m_\pm$. 

Theorem \ref{thm:regular point} implies that if $X_{n,-}(0) = y_n$, where $\lim_{n \to \infty} y_n = y \leq 0$ and $\lim_{n \to \infty} s_n(y_n) = s_-(y)$, then $(X_{n,-})_{n \geq 1}$ converges in distribution to $X_-$ with $X_-(0) = y$. Analogously, if $\lim_{n \to \infty} y_n = y \geq 0$ and $\lim_{n \to \infty} s_n(y_n) = s_+(y)$, then $(X_{n,+})_{n \geq 1}$ converges in distribution to $X_+$ with $X_+(0) = y$. 

To analyze the switching behavior, we define the hitting times:
\begin{equation}
\tau_{n,-} := \inf\{t \geq 0 : X_{n,-}(t) = x_{n,+}\}, \quad \tau_{n,+} := \inf\{t \geq 0 : X_{n,+}(t) = x_{n,-}\},
\end{equation}
and
$$\sigma_\pm := \inf\{t \geq 0 : L_\pm(t, 0\pm ) \geq \xi\},$$ 
where $L_\pm(t, 0\pm)$ is the local time at $0\pm$ of the diffusion $X_\pm$, and $\xi \sim \text{Exp}\big((2(s_+(0) - s_-(0)))^{-1}\big)$ is an exponentially distributed random variable independent of $X_\pm$.

Since $X_{n,-}$ and $X_{n,+}$ behave exactly like the original process $X_n$ until hitting $x_{n,+}$ and $x_{n,-}$, respectively, it is  enough to identify the limit of each excursion between successive switching times; the full result then follows by iterating this argument using the strong Markov property. Thus, it remains to prove that  $X_{n,-}(\cdot \wedge \tau_{n,-}) \Rightarrow X_-(\cdot \wedge \sigma_-)$ and $X_{n,+}(\cdot \wedge \tau_{n,+}) \Rightarrow X_+(\cdot \wedge \sigma_+)$ as $n \to \infty$, provided that $\lim_{n \to \infty} s_n(X_{n,-}(0)) = s_-(X_-(0))$ and $\lim_{n \to \infty} s_n(X_{n,+}(0)) = s_+(X_+(0))$. 

We will prove only $X_{n,+}(\cdot \wedge\tau_{n,+})\Rightarrow X_+(\cdot\wedge\sigma_+)$. Without loss of generality, we may assume that $x_{n,+}=0, s_n(x_{n,+})=0=s_+(0).$ We will also assume that $X_{n,+}(0)=X_+(0)=0$, since the behavior of $X_{n,+}$ stopped at the first   hitting time of  0 is described by Corollary \ref{corl:conv}.

We construct all $X_+$ and $X_{n,+}$ for $n \geq 1$ using the same standard Brownian motion $B$:
\[
X_+(t) = s_+^{-1}\big(B(A_+^{-1}(t))\big), \quad X_{n,+}(t) = s_n^{-1}\big(B(A_{n,+}^{-1}(t))\big), \quad t \geq 0,
\]
where 
\begin{equation}
    A_+(t) = \int_{[0,\infty)} l_B(t, s_+(y)) \, m_+(\mathrm{d} y), \quad A_{n,+}(t) = \int_{[x_{n,-},\infty)} l_B(t, s_n(y)) \, m_n(\mathrm{d} y).
    \label{eq:A_+ A_n+}
\end{equation}

Similarly to the convention in the previous proofs, we consider only those $\omega \in \Omega$ for which the sample path of the Brownian motion possesses standard properties. We will omit ``a.s.'' in most places.

It follows from the assumptions of our theorem and the proof of Theorem \ref{thm:regular point} that 
\begin{equation}
    \label{eq:1109}
    \lim_{n \to \infty} X_{n,+} = X_+, \quad \lim_{n \to \infty} A_{n,+} = A_+, 
\end{equation}
 locally uniformly.

Note that 
\begin{align*}
\tau_{n,+}
&= \inf\{t \geq 0 : X_{n, +}(t) = x_{n, -}\} \\
&= \inf\{t \geq 0 : B(A_{n,+}^{-1}(t)) = s_n(x_{n, -})\} \\
&= \inf\{A_{n,+}(z) : B(z) = s_n(x_{n,-})\} \\
&= A_{n,+}\left(\inf\{z\geq 0 : B(z) = s_n(x_{n,-})\}\right).
\end{align*}
The third equality follows from the following property of the Brownian motion. If $z_0=\inf\{z : B(z) = s_n(x_{n,-})\}$ is the first moment of hitting the point $s_n(x_{n,-}),$ then a.s.  any right neighborhood   of $z_0$ contains points $z$ such that $B(z)>s_n(x_{n,-}).$ Therefore, $z_0$ is a point of increase of $ A_{n,+}$ and by the definition of the generalized inverse function we have equality $ A_{n,+}^{-1}( A_{n,+}(z_0))=z_0$. More general, if $z$ is such that $B(z)>s_n(x_{n,-})$, then  $z$ is a point of increase of $ A_{n,+}$, and hence  $ A_{n,+}^{-1}( A_{n,+}(z))=z$. In contrast, this identity need not hold when for $z$ such that $B(z)<s_n(x_{n,-})$, since $A_{n,+}$ is then locally constant.
It follows from \eqref{eq:1109}, the continuity of $A_+$, Lemma \ref{lem:hitting_limit}, and standard  sample-path properties  of Brownian motion that 
\[
\lim_{n \to \infty} \tau_{n,+} =  
A_{+}\left(\inf\{z\geq 0 : B(z) = s_-(0)\}\right)=: \tau_+.
\]
This and \eqref{eq:1109} imply the locally uniform convergence 
\begin{equation*}
\begin{aligned}
\lim_{n \to \infty} X_{n,+}(\cdot \wedge \tau_{n,+})
&= X_+(\cdot \wedge \tau_+) \\
&= s_+^{-1}\big(B(A_+^{-1}(\cdot \wedge \tau_+))\big).
\end{aligned}
\end{equation*}
\begin{remk}
    Note that $\tau_+\neq  \inf\{t\geq 0 : B(A_{+}^{-1}(t)) = s_-(0)\}$ because $B(A_{+}^{-1}(t))$ is always non-negative.
\end{remk}
Denote   $\tau^B(s_-(0)) := \inf\{z : B(z) = s_-(0)\}$. So $\tau_+= A_+(\tau^B(s_-(0))).$
\begin{lem}
We have equality
\[
\begin{aligned}
\Big(s_+^{-1}\big(B(A_+^{-1}( t \wedge \tau_+))\big)\Big)_{t\geq 0}
&= \Big(X_+(t \wedge \tau_+) \Big)_{t\geq 0} \\
&= \Big(X_+(t)\1_{t\leq \tau_+} \Big)_{t\geq 0} \\
&=\Big(X_+(t) \, \mathbb{I}_{\{A_+^{-1}( t) \leq  \tau^B(s_-(0))\}}\Big)_{t\geq 0}.
\end{aligned}
\]
\end{lem}
\begin{proof}
     The first equality is just the definition. The second one follows from the fact that $X_+(\tau_+)=0.$ Note that, generally, $A_+^{-1}(A_+(t))\neq t$ because $A_+$ is not  a strictly increasing function and some   care is needed in proving  the last equality. 
     Let $(z_-, z_+)$ be the maximal interval containing $\tau^B(s_-(0))$ and  such that $B(z)<0, z\in (z_-, z_+). $ Then  $B (z_-)=B(z_+)=0, $  $A_+(z)=A_+(\tau^B(s_-(0))), z\in (z_-, z_+), \quad A_+(z)<A_+(\tau^B(s_-(0))), z<z_-,$ and $A_+(z)>A_+(\tau^B(s_-(0))), z>z_+$. Therefore 
     \[
\begin{aligned}
A_+^{-1}(t)&<z_-< \tau^B(s_-(0)) \text{ for }  t<\tau_+, \\
A_+^{-1}(\tau_+)&=z_+, \text{ and} \\
A_+^{-1}(t)&>z_+ \text{ for }  t>\tau_+.
\end{aligned}
     \]
The only point where the indicators may not match is  $\tau_+.$ In any case, 
$$X_+(\tau_+)=s_+^{-1}(B(A_+^{-1}(\tau_+)))=s_+^{-1}(B(z_+))= s_+^{-1}(0)=0.$$ 
The lemma is proved.
     \end{proof} 

So, it remains only to verify that
\[
\begin{aligned}
&\Big( s_+^{-1}\big(B(A_+^{-1}(t))\big)
\mathbb{I}_{\{A_+^{-1}(t) \leq \tau^B(s_-(0))\}} \Big)_{t \geq 0} \\
&\qquad \overset{d}{=} \big(X_+(t \wedge \sigma_+)\big)_{t \geq 0}.
\end{aligned}
\]
Recall that the local time at $0+$ may increase only when the process visits $0+$. Hence,  $X_+(\sigma_+)=0$ and
\[
\begin{aligned}
X_+(t \wedge \sigma_+)
&= X_+(t) \mathbb{I}_{\{t < \sigma_+\}} \\
&= X_+(t) \mathbb{I}_{\{t \leq \sigma_+\}} \\
&= X_+(t) \mathbb{I}_{\{L_+(t,0+) < \xi\}} \\
&= X_+(t) \mathbb{I}_{\{L_+(t,0+) \leq  \xi\}},
\end{aligned}
\]
where $\xi \sim \text{Exp}\big((2(s_+(0) - s_-(0)))^{-1}\big)$ is independent of $B$. 
The last equality is true  since \(\xi\) is independent of \(X_+\) and has a continuous distribution. So, almost surely it  does not belong to the at most countable set of  levels on which \(L_+(\cdot,0+)\) is constant on a nondegenerate interval.  

Therefore, the theorem will be proven if we show that the following identity in distribution holds:
\[
\begin{aligned}
&\Big( s_+^{-1}\big(B(A_+^{-1}(t))\big)
\mathbb{I}_{\{A_+^{-1}(t) < \tau^B(s_-(0))\}} \Big)_{t \geq 0} \\
&\qquad \overset{d}{=} \Big( s_+^{-1}\big(B(A_+^{-1}(t))\big)
\mathbb{I}_{\{L_+(t,0+) < \xi\}} \Big)_{t \geq 0}
\end{aligned}
\]
or
\[
\begin{aligned}
&\Big(  B(A_+^{-1}(t))
\mathbb{I}_{\{A_+^{-1}(t) \leq  \tau^B(s_-(0))\}} \Big)_{t \geq 0} \\
&\qquad \overset{d}{=} \Big( B(A_+^{-1}(t))
\mathbb{I}_{\{L_+(t,0+) \leq \xi\}} \Big)_{t \geq 0}
\end{aligned}
\]

First, we  prove this result in the case where $m_+(\mathrm{d} x) = \mathbb{I}_{\{x \geq 0\}} \, \mathrm{d} x$ and $s_+(x)=x, x\geq 0$, that is if $B_+ $ is a reflected Brownian motion.

Recall the key elements of Itô's excursion theory and the construction of a standard Brownian motion as a concatenation of excursions, cf. \cite{Blumenthal2012excursions}, where a more general construction is discussed in full detail. 

Let $\cC$ denote the space of excursions, i.e., the set of continuous functions $\omega \colon [0,\infty) \to \mbR$ such that $\omega(0)=0$ and there exists a lifetime $\sigma = \sigma(\omega) \in (0,\infty)$ such that $\omega(t) \neq 0$ for $t \in (0,\sigma)$ and $\omega(t)=0$ for $t \geq \sigma$. Let $\cC_+$ and $\cC_-$ be the subsets of $\cC$ consisting of positive and negative excursions, respectively. 

Let $N = \sum_k \delta_{(z_k, \omega_k)}$ be a point measure on $[0,\infty) \times \cC$. Set 
\[
\varphi(z) := \sum_{z_k \leq z} \sigma(\omega_k), \quad \varphi^{-1}(t) := \inf\{z \geq 0 : \varphi(z) > t\},
\]
and define the process $B(t) := \omega_k\big(t - \varphi(z_k-)\big)$ for $t \in [\varphi(z_k-), \varphi(z_k))$, and $B(t) := 0$ otherwise. If all these objects $\varphi, \varphi^{-1}$, and $B$ are well-defined, we say that $B$ is obtained via Itô's construction from $N$.

It is well known that there exists a $\sigma$-finite measure $\nu$ on $\cC$ and a Poisson point measure $N = \sum_k \delta_{(z_k, \omega_k)}$ on $[0,\infty) \times \cC$ with intensity $\mathrm{d}z \times \nu(\mathrm{d}\omega)$ such that the process $B$ obtained via Itô's construction from $N$ is a standard Brownian motion and, additionally, the process $(\varphi^{-1}(t))_{t\geq0}$,  coincides with the symmetric local time of $B$ at $0$, defined as 
\[
l_B(t) = \lim_{\varepsilon \to 0+} \frac{1}{2\varepsilon} \int_0^t \mathbb{I}_{\{|B(z)| \leq \varepsilon\}} \, \mathrm{d} z.
\]
That is, we have the identity 
\begin{equation}
    \label{eq:local time excursion}
\begin{aligned}
l_B(t)
&= \inf\biggl\{ z  \geq 0 : \sum_{z_k \leq z} \sigma(\omega_k) > t \biggr\} \\
&= \inf\biggl\{ z  \geq 0 : \sum_{z_k \leq z} \sigma(\omega_k) \geq t \biggr\}, \quad t \geq 0.
\end{aligned}
\end{equation}
Since $\varphi$ is a strictly increasing subordinator, one can use either strict or non-strict inequality under the infimum sign. It is also a standard fact that for a Brownian motion, this symmetric local time equals the right-sided local time, which is defined as 
\[
\lim_{\varepsilon \to 0+} \frac{1}{\varepsilon} \int_0^t \mathbb{I}_{\{B(z) \in [0, \varepsilon]\}} \, \mathrm{d} z.
\]

Recall that we have assumed $m_+(\mathrm{d} x) = \mathbb{I}_{\{x \geq 0\}} \, \mathrm{d} x$. Thus, $A_+(t) = \int_0^t \mathbb{I}_{\{B(u) \geq  0\}} \, \mathrm{d} u$. Denote by $N_+$ and $N_-$ the restrictions of the point measure $N$ to the sets $[0,\infty) \times \cC_+$ and $[0,\infty) \times \cC_-$, respectively.

Note that the process $B_+(t) := B(A_+^{-1}(t))$, which is the Brownian motion reflected at $0$, can be obtained via Itô's construction directly from the restriction $N_+$. Moreover, it follows from Remark \ref{remk:local_time} that, on a set of probability $1$, the local time $l_{B_+}(t)$ satisfies:
\[
\begin{aligned}
\lim_{\varepsilon \to 0+} \frac{1}{\varepsilon}
\int_0^t \mathbb{I}_{\{B_+(z) \in [0,\varepsilon]\}} \, \mathrm{d} z
&= l_{B_+}(t) \\
&= l_B(A_+^{-1}(t)) \\
&= \lim_{\varepsilon \to 0+} \frac{1}{\varepsilon}
\int_0^{A_+^{-1}(t)} \mathbb{I}_{\{B(s) \in  [0,\varepsilon]\}} \, \mathrm{d} s.
\end{aligned}
\]

Assume that $B_+(t)=B(A^{-1}_+(t)) > 0$ and that the underlying Brownian motion $B$ is inside the excursion $\omega_K$ at the time instant $A^{-1}_+(t)$. 
Then
\[
\sum_{z_k < z_K} \sigma(\omega_k) < A_+^{-1}(t) < \sum_{z_k \leq z_K} \sigma(\omega_k).
\]
On the other hand, $B_+$ is obtained from Itô's construction from $N_+$ and  $t = \int_0^t \mathbb{I}_{\{B_+(u) > 0\}} \, \mathrm{d} u$. So,
\[
\sum_{z_k < z_K} \sigma(\omega_k) \mathbb{I}_{\{\omega_k \in \cC_+\}} < t < \sum_{z_k \leq z_K} \sigma(\omega_k) \mathbb{I}_{\{\omega_k \in \cC_+\}}.
\]

Since $l_B = \varphi^{-1}$, it follows that
\[
l_B(A_+^{-1}(t)) = z_K = \inf\biggl\{z \geq 0 : \sum_{z_k \leq z} \sigma(\omega_k) \mathbb{I}_{\{\omega_k \in \cC_+\}} \geq t\biggr\}.
\]
The set of points $t \geq 0$ for which $B_+(t) = B(A_+^{-1}(t)) > 0$ is dense in $[0,\infty)$. The left-hand side of the last equality equals $l_{B_+}(t)$ and hence is continuous in $t.$ The right-hand side is also continuous in $t$ because it is the inverse of a strictly increasing subordinator.  Therefore
\[
l_{B_+}(t) = l_B(A_+^{-1}(t)) = \inf\biggl\{z \geq 0 : \sum_{z_k \leq z} \sigma(\omega_k) \mathbb{I}_{\{\omega_k \in \cC_+\}} \geq t\biggr\}
\]
holds for all $t \geq 0$.

Let $k^*$ be the index of the first excursion of $B$ that hits $-a$, where $a:=-s_-(0)>0$, i.e.,
\[
z_{k^*} = \inf \{z_k : \exists u > 0 \ \omega_k(u) = -a\}.
\]
Since the restrictions $N_+$ and $N_-$ of the Poisson point measure to the disjoint sets $[0,\infty)\times\cC_+$ and $[0,\infty)\times\cC_-$ are independent, the random variable $z_{k^*}$ is independent of $B_+$. Moreover, $z_{k^*}$ is exponentially distributed, as it represents the first instance $z_k$ where the second coordinate $\omega_k$ of the Poisson point process belongs to a specific deterministic subset of excursions. By construction, see \eqref{eq:local time excursion}, $z_{k^*}$ is the local time that $B$ accumulates at $0$ before its first hitting time of $-a$. It is a well-known fact, see, e.g., \cite[p.~199, formula (2.3.1)]{borodin2002handbook}, that $z_{k^*} \sim \text{Exp}\big(\frac{1}{2|a|}\big)$.

Let $t^* := \inf\{t \geq 0 : B(t) = -a\}$. Then  $z_{k^*} = \vf^{-1}(t^*)= l_B(t^*)$. Let $t \geq 0$ be such that $B(A_+^{-1}(t)) > 0$. Then $A_+^{-1}(t) \neq t^*$, and $A_+^{-1}$ is strictly increasing in a neighborhood of $t$, and we have the following chain of equivalent events:
\begin{align*}
\{A_+^{-1}(t) \leq t^*\}
&= \{A_+^{-1}(t) < t^*\} \\
&= \{t < A_+(t^*)\} \\
&= \biggl\{t < \sum_{z_k \leq z_{k^*}} \sigma(\omega_k) \mathbb{I}_{\{\omega_k \in \cC_+\}}\biggr\} \\
&= \biggl\{t < \sum_{z_k < z_{k^*}} \sigma(\omega_k) \mathbb{I}_{\{\omega_k \in \cC_+\}}\biggr\}.
\end{align*}
Consequently, this implies that $\{l_{B_+}(t) < z_{k^*}\} \subset \{A_+^{-1}(t) \leq t^*\} \subset \{l_{B_+}(t) \leq z_{k^*}\}$ whenever $B_+(t) > 0$.

There are at most countably many levels on which $l_{B_+}$ is constant on a nondegenerate interval, meaning there is at most a countable set of points $(x_k)$ such that for any $k$, the equation $l_{B_+}(t) = x_k$ has multiple solutions in $t \geq 0$. Since the distribution of $z_{k^*}$ is continuous and independent of $B_+$, it follows that with probability $1$ there exists a unique point $\hat{t} \geq 0$ such that $l_{B_+}(\hat{t}) = z_{k^*}$. This $\hat{t}$ must be a growth point of $l_{B_+}$, which implies $B_+(\hat{t}) = 0$. 

It is easy to see that if $t < \hat{t}$, then $l_{B_+}(t) < z_{k^*}$ and $A_+^{-1}(t) \leq t^*$. Conversely, if $t > \hat{t}$, then $l_{B_+}(t) > z_{k^*}$ and $A_+^{-1}(t) \geq t^*$. Combining these observations, we conclude that with probability $1$, the following sample-path identity holds:
\[
\big( B_+(t) \mathbb{I}_{\{A_+^{-1}(t) \leq t^*\}} \big)_{t \geq 0} = \big( B_+(t) \mathbb{I}_{\{l_{B_+}(t) \leq z_{k^*}\}} \big)_{t \geq 0}.
\]
The latter process has the same distribution as $\big( B_+(t) \mathbb{I}_{\{l_{B_+}(t) \leq \xi\}} \big)_{t \geq 0}$, where $\xi \sim \text{Exp}\big(\frac{1}{2|a|}\big)$, which completes the proof in the reflected Brownian motion case, i.e., when  $s_+(x)=x,\quad m_+(\di x) = \1_{x\geq 0}\di x.$ 

We now pass to the general case. By the subordination and chain rules for one-dimensional diffusions, see \cite[Section 5.5]{ItoMcKean65},  the diffusion \(X_+\) can be obtained from a reflected Brownian motion \(B_+\) by a space-time transformation  determined by the corresponding scale function and  speed measure, see \eqref{eq:reg_diff_refl} and \eqref{eq:reg_diff_CAF_refl}.  
The time change modifies the time parametrization of the excursions, but preserves their order and their parametrization by the local time at the reflecting boundary. 
In view of the relation between the corresponding local times in \eqref{eq:int_loc_time_X0}, this time change does not alter the local-time parameter of the excursions. Hence, the random local-time level which, in the Brownian construction above, corresponds to the first excursion hitting the level \(-a=s_-(0)\) has the same distribution in the general case. Consequently, it has the same exponential distribution
$$ \textrm{Exp}\left(\frac1{2a}\right) = \textrm{Exp}\left(\frac1{2(s_+(0)-s_-(0))}\right) $$
as in the reflected Brownian motion case. Applying the inverse scale transformation $s_+^{-1}$ then yields the required assertion for $X_+$. The argument for $X_-$ is analogous.
\end{proof}
Similarly to the proof of Theorem \ref{thm:regular point},   using the representation \eqref{eq:representation X_n}--\eqref{eq:representation A_n} and the approximation provided by Theorem \ref{thm:limit_SNOdiffosion}, we obtain the following representation of a snapping-out diffusion.
\begin{thm}
    \label{thm:SNO_representation}
    Let $s_\pm, m_\pm$ be from Theorem \ref{thm:limit_SNOdiffosion}. Set
    \[
    \begin{aligned}
    s(x)&:=s_-(x)\1_{x<0}+s_+(x)\1_{x\geq 0},\\
    \wt m&:=\wt m_-+\wt m_+=m_-\circ s_-^{-1} + m_+\circ s_+^{-1}.
    \end{aligned}
    \]
    Then for any Brownian motion $B$ the process $s^{-1}(B(A^{-1}(t)))$, where $A(t)=\int_\mbR l_B(t,y)\wt m(\di y)$ is a snapping-out diffusion with  the interface located at $0$, scale functions $s_\pm$, speed measures $m_\pm$, and equal transmission intensities $ \frac{1}{2(s_+(0) - s_-(0))}.$   
\end{thm}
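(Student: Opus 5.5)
The plan is to realize the process $Y(t):=s^{-1}(B(A^{-1}(t)))$ as an almost sure limit of regular diffusions on $\mbR$ that are all built from the same Brownian motion $B$. We then identify the limit law through Theorem \ref{thm:limit_SNOdiffosion}. Here $s^{-1}$ is understood as the generalized inverse, equal to $0$ on $[s_-(0),s_+(0)]$, and the two sides of that gap record $0-$ and $0+$.

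\emph{Step 1: the approximating diffusions.} We replace the gap by a steep linear piece. Set $s_n(x):=s_-(x)$ for $x<0$, $s_n(x):=s_-(0)+n x(s_+(0)-s_-(0))$ for $x\in[0,1/n]$, and $s_n(x):=s_+(x-1/n)$ for $x>1/n$. For $m_n$, take $m_-$ on $(-\infty,0)$, the shift of $m_+$ on $[1/n,\infty)$, and $n^{-1}$ times Lebesgue measure on $(0,1/n)$. Then $\wt m_n=m_n\circ s_n^{-1}$ converges vaguely to $\wt m_-$ on $(-\infty,s_+(0))$ and to $\wt m_+$ on $(s_-(0),\infty)$. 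The only care needed is at the endpoints $s_\pm(0)$, where atoms of $m_\pm$ at $0\pm$ must be placed at the points $s_n(0)=s_-(0)$ and $s_n(1/n)=s_+(0)$. Moreover $\wt m_n$ converges vaguely on all of $\mbR$ to $\wt m$, because the mass placed on the gap tends to $0$. Hence Theorem \ref{thm:limit_SNOdiffosion} applies to $X_n(t):=s_n^{-1}(B(A_n^{-1}(t)))$ with $A_n(t)=\int l_B(t,y)\,\wt m_n(\di y)$, and $X_n$ converges in law to the snapping-out diffusion with intensities $\frac{1}{2(s_+(0)-s_-(0))}$. A starting point $x\ne0$ is handled directly. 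For a start at $0\pm$, we let $B(0)$ be $s_\pm(0)$ and rerun the argument, or we use that the limit process started at $0\pm$ is determined by the strong Markov construction.

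\emph{Step 2: almost sure convergence $X_n\to Y$.} By Lemma \ref{lem: conv monot cont} and Lemma \ref{lem:conv of integrals}, together with joint continuity of $l_B$, we get $A_n\to A$ locally uniformly. The main difficulty is that $A$ is flat while $B$ travels in $(s_-(0),s_+(0))$, so $A^{-1}$ jumps. We copy the equicontinuity argument from the proof of Theorem \ref{thm:regular point}.
\begin{itemize}
\item First, $A_n^{-1}(t)\to A^{-1}(t)$ on a dense set of $t$, by Lemma \ref{lem: inverses}.
\item Away from gap excursions, $A$ is strictly increasing, so Corollary \ref{corl:5.1} gives uniform convergence of $A_n^{-1}$ there.
\item During time intervals $[b_k,a_{k+1}]$ in which $B(A_n^{-1}(\cdot))$ lies in a small neighbourhood of $[s_-(0),s_+(0)]$, we have $|s_n^{-1}(y)|\le\ve$ uniformly, since $s_n^{-1}\to s^{-1}$ locally uniformly and $s^{-1}$ is continuous with value $0$ there.
\end{itemize}
Up to time $T$ there are only finitely many such crossings of size bounded below. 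Together these give uniform equicontinuity of $(X_n)$ on $[0,T]$, so $X_n\to Y$ locally uniformly. This part is routine but is where the real-valued projection matters. As real-valued paths, $Y$ is continuous, since $s^{-1}$ collapses the gap to $0$.

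\emph{Step 3: identification as a process on $(-\infty,0-]\sqcup[0+,\infty)$.} From Steps 1 and 2, $Y$ has the law of the projected snapping-out diffusion. To recover the side label, we set $Y$ to be $0+$ or $0-$ at a zero according to whether $B(A^{-1}(t))$ equals $s_+(0)$ or $s_-(0)$. The lifted process is then obtained from $Y$ by a measurable functional. This functional assigns each zero of $Y$ the sign of the last excursion, or the next excursion, which agrees with the snapping-out concatenation almost surely. The only subtle point is to check that the switching times correspond to the crossings of $B$ through the gap. This follows as in the proof of Theorem \ref{thm:limit_SNOdiffosion}, where $\tau_+=A_+(\tau^B(s_-(0)))$ was identified. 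I expect Step 2, controlling the jumps of $A^{-1}$, to be the main obstacle.
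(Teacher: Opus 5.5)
Your proposal is correct and follows the paper's own route, which the paper only sketches: approximate by regular diffusions on $\mbR$ driven by the same $B$, identify the limit law through Theorem~\ref{thm:limit_SNOdiffosion}, and obtain pathwise convergence to $s^{-1}(B(A^{-1}))$ by the equicontinuity argument of Theorem~\ref{thm:regular point}; your steep-ramp choice, for which $\wt m_n$ equals $\wt m$ plus a mass $n^{-2}$ spread over the gap and $|s_n^{-1}-s^{-1}|\le 1/n$, makes this concrete. Two small points: a start with $B(0)$ strictly inside $(s_-(0),s_+(0))$ reduces to your $0\pm$ starts by the strong Markov property of $B$ at its exit time from the gap, at which $A$ is still $0$; and in Step~3 the side label at a zero must be taken from the right, i.e.\ from the next excursions, equivalently from whether $B(A^{-1}(t))$ equals $s_+(0)$ or $s_-(0)$, because the last-excursion rule gives the wrong side at switching times.
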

\begin{remk}
    If $B(0)=s(x)\notin [s_-(0), s_+(0)]$, then the snapping-out diffusion starts from $x.$ Otherwise, if $B(0)=y\in [s_-(0), s_+(0) ]$, then the snapping-out diffusion starts from   $0+$ with probability $\frac{y - s_-(0)}{s_+(0) - s_-(0)}$ and from $0-$ with probability $\frac{s_+(0) - y}{s_+(0) - s_-(0)}$. Indeed, the Brownian time spent inside the interval $ (s_-(0),s_+(0))$  is collapsed by the time change, and the initial side is determined by the endpoint through which $B$ first exits this interval.
\end{remk}
\begin{remk}\label{remk:SNOB construction}  
    Let $X$ be a snapping-out Brownian motion with  transmission intensities $\lambda_\pm>0$, i.e., assume that  in the definition of a snapping-out diffusion we used standard Brownian motions $B_\pm$ reflecting at 0  into the positive and negative half-lines, respectively,  and their one-sided local times at $0\pm$. 
       We apply Theorem \ref{thm:SNO_representation} and construct a process that has the same distribution as $X$,  which is a function of a Brownian motion. 
     It follows from Remark \ref{remk:SNOD-different representation} and representation of the local time from Remark \ref{remk:local_time} that we have to choose affine scale functions   $s_\pm$ and speed measures $m_\pm$ such that 
    \[
  \frac{1}{2\big(s_+(0) - s_-(0)\big)}= \frac{\lambda_-}{s_-'(0-)}= \frac{\lambda_+}{s_+'(0+)},\]
     \[
      m_-(\di x)= (s_-'(0-) )^{-1} \1_{x<0} \di x,\quad m_+(\di x)= (s_+'(0+) )^{-1} \1_{x>0} \di x.
    \]
    In particular, we may select them  such that 
    \[
    \begin{aligned}
    s_-(x) &=\frac{\lambda_-}{\lambda_+}x  -\frac{1}{2\lambda_+}, \quad x<0, \\
    s_+(x)&=  x, \quad x>0, \\
    m_-(\di x)&= \frac{\lambda_+}{\lambda_-} \1_{x<0} \di x, \\
    m_+(\di x)&=   \1_{x>0} \di x.
    \end{aligned}
    \]
    So,
    \begin{equation}
        \label{eq:SNOB_s,m}
    \begin{aligned}
    s^{-1}(x)
    &=\frac{\lambda_+}{\lambda_-}(x+\frac{1}{2\lambda_+})\1_{x<-\frac{1}{2\lambda_+}}+  x\1_{x>0}, \\
    \wt m(\di x)
    &= (\frac{\lambda_+^2}{\lambda_-^2} \1_{x<-\frac{1}{2\lambda_+}}+ \1_{x>0}) \di x.
    \end{aligned}
    \end{equation}
    
\end{remk}
\section{Applications}\label{sec:expl of application}
In this section we give examples of applications of the results from \S \ref{sec:Main results} to several models. We consider sequences of multi-skewed Brownian motions in which several  membranes merge into a single membrane in the limit. 
Depending on the scaling of the membrane permeabilities and the distances
between them, the limiting process may be a skew Brownian motion,
a skew-sticky Brownian motion, or a snapping-out Brownian motion.  Recall also that convergence in distribution of Markov processes implies, in appropriate sense, convergence of solutions to the backward Kolmogorov equations; see Remark \ref{remk: relation Kolmogorov and weak convergence}.

\subsection{Approximation of delta-valued drift by integrated ones}
 \label{expl:Portenko}  
    Let $a \colon \mathbb{R} \to \mathbb{R}$ be a continuous, bounded, and integrable function, and set $a_\varepsilon(x) := \frac{1}{\varepsilon} a\big(\frac{x}{\varepsilon}\big)$. Denote $\alpha := \int_\mathbb{R} a(x) \, \mathrm{d} x = \int_\mathbb{R} a_\varepsilon(x) \, \mathrm{d} x$.

    Assume that $X_\varepsilon$ satisfies the SDE
    \[
    \mathrm{d} X_\varepsilon(t) = \mathrm{d} W(t) + a_\varepsilon(X_\varepsilon(t)) \, \mathrm{d} t, \quad t \geq 0.
    \]
    It is well known that this SDE has a unique (strong) solution, which is a regular diffusion on $\mathbb{R}$. Recall that its scale function $s_\varepsilon$ and speed measure $m_\varepsilon$ are given by
    \[
    s_\varepsilon(x) := \int_0^x \exp\left(-2\int_0^y a_\varepsilon(z) \, \mathrm{d} z\right) \mathrm{d} y, \quad x \in \mathbb{R},
    \]
    \[
    \mathrm{d} m_\varepsilon(x) = \exp\left( 2\int_0^x a_\varepsilon(z) \, \mathrm{d} z\right) \mathrm{d} x.
    \]

    Observe that for any fixed $y > 0$, a simple change of variables yields
    \[
    \begin{aligned}
    \lim_{\varepsilon \to 0+} \int_0^y a_\varepsilon(z) \, \mathrm{d} z
    &= \lim_{\varepsilon \to 0+} \int_0^{y/\varepsilon} a(\tilde{z}) \, \mathrm{d} \tilde{z} \\
    &= \int_0^\infty a(\tilde{z}) \, \mathrm{d} \tilde{z},
    \end{aligned}
    \]
    and similarly, for $y < 0$,
    \[
    \begin{aligned}
    \lim_{\varepsilon \to 0+} \int_0^y a_\varepsilon(z) \, \mathrm{d} z
    &= \lim_{\varepsilon \to 0+} \int_0^{y/\varepsilon} a(\tilde{z}) \, \mathrm{d} \tilde{z} \\
    &= \int_0^{-\infty} a(\tilde{z}) \, \mathrm{d} \tilde{z} \\
    &= -\int_{-\infty}^0 a(\tilde{z}) \, \mathrm{d} \tilde{z}.
    \end{aligned}
    \]
    Hence, the assumptions of Theorem \ref{thm:open-infinite-intervals}  are satisfied with $I = \mathbb{R}$, leading to the limiting scale function
    \[
    s_0(x) = \begin{cases}
        x \exp\left( -2\int_0^\infty a(z) \, \mathrm{d} z \right), & x \geq 0, \\
        x \exp\left( 2\int_{-\infty}^0 a(z) \, \mathrm{d} z \right), & x < 0,
    \end{cases}
    \]
    and the limiting speed measure
    \[
    \begin{aligned}
    \mathrm{d} m_0(x)
    = \Big(&\exp\left( 2\int_0^\infty a(z) \, \mathrm{d} z \right) \mathbb{I}_{\{x \geq 0\}} \\
    &+ \exp\left( -2\int_{-\infty}^0 a(z) \, \mathrm{d} z \right) \mathbb{I}_{\{x < 0\}} \Big) \mathrm{d} x.
    \end{aligned}
    \]
    Consequently, if $X_\ve(0)\Rightarrow x,$ then  the limit diffusion $X_0$ is a skew Brownian motion with the parameter $\gamma = \tanh(\alpha)$ and $X_0(0)=x$. The first result of this type appeared in the pioneering paper by N.~Portenko \cite{portenko1976generalized}; see also a generalization of the corresponding result for perturbations of a skew Brownian motion and Walsh Brownian motion in \cite{bobrowski2026portenko}.

\subsection{Penalization and reflected Brownian motion}

Consider a sequence of SDEs 
\[
\di X_n(t) = a_n(X_n(t)) \di t + \di W(t), t\geq 0,
\]
with the same initial condition $X_n(0)=x_0\geq 0.$ Assume that $a_n$ are nonnegative continuous functions that vanish on the
positive half-line and become increasingly large near $0$ on the negative
half-line, thereby pushing the process towards the positive half-line.  It follows from Theorem \ref{thm:regular point} that if   $\limn\int_{-\ve}^{0}a_n(y) \di y=+\infty,$ for any $\ve>0,$ then  the sequence $(X_n)$ converge in distribution to a reflected Brownian motion started from $x_0$; see  \S \ref{subsec:SDEScale} for the corresponding  formulas for the scale function and   speed measure. 

This condition is satisfied , for example,  if   $a_n(x)=n^{1+\delta} a(nx)$, where $\delta>0$ is any positive number and  $a$ is a continuous nonnegative function such that  $a(x)=0, x>0,$ which is non identically to 0. In contrast, the case $\delta=0$ may be insufficient to produce the reflection when  $a$ is integrable. Indeed, in this case the limiting process is a skew Brownian motion; see \S \ref{expl:Portenko}.

\subsection{Accumulation of semipermeable membranes at one point}
 \label{expl:5.1}
    Let $X_n$ be a solution to the SDE
    \[
    \mathrm{d} X_n(t) = \mathrm{d} W(t) + \sum_{k=1}^d \gamma_k \, \mathrm{d} L_{X_n}^{\varepsilon_{n,k}}(t),
    \]
    where $\gamma_k \in (-1,1)$ are the permeability parameters, $W$ is a standard Brownian motion, and $L_{X_n}^{\varepsilon_{n,k}}$ is the symmetric local time of $X_n$ at the membrane position $\varepsilon_{n,k}$. This process describes a Brownian motion with $d$ semipermeable membranes. We assume that for each fixed $n \geq 1$, the membranes are located at distinct points $\varepsilon_{n,1}, \dots, \varepsilon_{n,d}$, and that they all converge to the origin, i.e., $\lim_{n \to \infty} \varepsilon_{n,k} = 0$ for all $1 \leq k \leq d$.

    Setting $\gamma_k = p_k - q_k$ with $p_k + q_k = 1$ (where $p_k, q_k > 0$), the scale function $s_n$ and the speed measure $m_n$ of $X_n$ are determined from the following transmission and continuity conditions (see, e.g., \S \ref{expl:many membranes}):
    \begin{itemize}
        \item The continuous function $s_n$ is linear on each subinterval of $\mathbb{R} \setminus \{\varepsilon_{n,1}, \dots, \varepsilon_{n,d}\}$;
        \item At each membrane $\varepsilon_{n,k}$, the derivatives of the scale function satisfy the condition
        \[
        p_k s_n'(\varepsilon_{n,k}+) = q_k s_n'(\varepsilon_{n,k}-);
        \]
        \item The speed measure $m_n$ is given by $\mathrm{d} m_n(x) = \big(s_n'(x)\big)^{-1} \, \mathrm{d} x$.
    \end{itemize}

    Without loss of generality, we normalize scale functions such that $s_n(0)=0$ and  $s_n'(x)=1$ for all $x$ to the right of all membranes.

    As $n \to \infty$, the sequence of scale functions   $(s_n) $  and speed measures  $(m_n)$ converge to the scale $s_0$ and  the speed measure $m_0$ of a regular diffusion process $X_0$, where $s_0$ is a linear function on $(-\infty,0]$    and $[0,\infty)$, satisfying $p s_0'(0+) = q s_0'(0-)$ where 
    $    \frac{p}{q} = \prod_{k=1}^d \frac{p_k}{q_k}$, and $\mathrm{d} m_0(x) = \big(s_0'(x)\big)^{-1} \, \mathrm{d} x$. Hence Theorem \ref{thm:open-infinite-intervals} implies the following result.
    \begin{thm}\label{thm:5.1}
        Let $X_n$ be the Brownian motion with $d$ semipermeable membranes described above. Assume that $X_n(0) \Rightarrow \xi$ as $n \to \infty$. Then, we have the weak convergence
        \[
        X_n \Rightarrow X_0, \quad n \to \infty,
        \]
        in the space of continuous functions, where $X_0$ is a skew Brownian motion starting from $X_0(0) \overset{d}{=} \xi$, and its skewness parameter $\gamma = p - q \in (-1,1),$ where $p,q>0, p+q=1,$ is determined by the relation
        \[
        \frac{p}{q} = \prod_{k=1}^d \frac{p_k}{q_k} = \frac{p_1 \cdot \dots \cdot p_d}{q_1 \cdot \dots \cdot q_d}.
        \]
        Here $p_k=\frac{1+\gamma_k}{2}$ and $q_k=\frac{1-\gamma_k}{2}$.
    \end{thm}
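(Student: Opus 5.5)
The plan is to derive Theorem~\ref{thm:5.1} from Corollary~\ref{corl:conv_weak} (equivalently Theorem~\ref{thm:open-infinite-intervals}, with Skorokhod's representation applied to the initial conditions). This requires checking conditions \textbf{A}$_2$--\textbf{A}$_3$ with $I_n=I_0=\mbR$, together with $s_n(\pm\infty)=\pm\infty$. The first step is to write $s_n$ explicitly. Order the membranes as $\varepsilon_{n,(1)}<\dots<\varepsilon_{n,(d)}$; for large $n$ this ordering may depend on $n$, but the product of the ratios does not. With the normalization $s_n'=1$ to the right of all membranes, crossing membrane $k$ from right to left multiplies the slope by $p_k/q_k$. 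Hence the slope to the left of all membranes equals $\prod_{k}p_k/q_k=p/q$, independently of the order and of the spacing. Between membranes the slope is a partial product, which is bounded above and below by constants $c,C>0$ depending only on $(p_k,q_k)$.

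The second step is convergence of the scale functions, \textbf{A}$_2$. Since $s_n(0)=0$ and $|s_n'|\le C$, for fixed $x>0$ and $n$ large enough that all $|\varepsilon_{n,k}|<x$, we have
\[
s_n(x)=s_n(\max_k\varepsilon_{n,k})+\bigl(x-\max_k\varepsilon_{n,k}\bigr).
\]
The first term is at most $C\max_k|\varepsilon_{n,k}|\to0$ in absolute value, so $s_n(x)\to x$. Similarly, $s_n(x)\to (p/q)x$ for $x<0$. This limit is $s_0$ normalized by $s_0'(0+)=1$ and $s_0'(0-)=p/q$, which indeed satisfies $p\,s_0'(0+)=q\,s_0'(0-)$. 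Moreover $s_n(\pm\infty)=\pm\infty$, since the slopes are positive constants outside a compact set.

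The third step is vague convergence of the speed measures, \textbf{A}$_3$. The density $1/s_n'$ equals $1$ on $(\max_k\varepsilon_{n,k},\infty)$ and $q/p$ on $(-\infty,\min_k\varepsilon_{n,k})$, and it is bounded by $1/c$ in between. For a compactly supported continuous $f$, the integral over the shrinking middle interval is at most $\|f\|_\infty c^{-1}\cdot 2\max_k|\varepsilon_{n,k}|\to0$. By dominated convergence the remaining parts tend to
\[
\int f\,\bigl(\1_{x>0}+(q/p)\1_{x<0}\bigr)\,\di x=\int f\,\di m_0 .
\]

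The final step is to identify the limit: by \S\ref{expl:skewBM}, the pair $(s_0,m_0)$ is a scale function and speed measure of a skew Brownian motion with parameter $\gamma=p-q$. Corollary~\ref{corl:conv_weak} then gives $X_n\Rightarrow X_0$. There is no real obstacle; the only point needing care is that the membrane ordering may change with $n$, which is harmless because the slope bounds and the final product are order-independent.
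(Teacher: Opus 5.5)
Your proposal is correct and follows the same route as the paper: you compute $s_n$ and $m_n$ from the transmission conditions, show $s_n\to s_0$ pointwise and $m_n\to m_0$ vaguely, where $(s_0,m_0)$ is the scale--speed pair of the skew Brownian motion with $p/q=\prod_k p_k/q_k$, and then invoke Theorem~\ref{thm:open-infinite-intervals} (with Skorokhod's representation, as in Corollary~\ref{corl:conv_weak}). Your explicit handling of the $n$-dependent ordering of the membranes and of the uniform slope bounds $c\le s_n'\le C$ fills in details that the paper only asserts.
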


    \begin{remk}
        By recalling the identity $e^{2x} = \frac{1 + \tanh(x)}{1 - \tanh(x)}$, if we parameterize individual permeabilities as $\gamma_k = \tanh(\alpha_k)$ for $1 \leq k \leq d$, then the limiting parameter $\gamma$ takes the elegant additive form:
        \[
        \gamma = \tanh(\alpha_1 + \dots + \alpha_d).
        \]
         The corresponding result for two membranes was obtained  in \cite{zaitseva2002wiener}; the general case was established in \cite{mazzonetto2016exact, krykun2017convergence}.
         Note also that the passage from the interface conditions \eqref{eq:multi-skewBM-interface conditions} for \(X_\ve\) to the limiting interface condition \eqref{eq:skewBM-interface conditions} for \(X_0\) is considerably more transparent than the formula involving the hyperbolic tangent. 
    \end{remk}
 
\subsection{Two membranes with small permeability. The limiting skew Brownian motion}
 \label{expl: two membranes skew BM}
    Consider a family of stochastic processes $X_\varepsilon$ satisfying the SDE
    \begin{equation}
    \label{eq:SDE_two_membranes}
    \begin{aligned}
    \mathrm{d} X_\varepsilon(t)
    &= \mathrm{d} W(t) + \gamma^\varepsilon_- \, \mathrm{d} L^{X_\varepsilon}_{-\varepsilon/2}(t) \\
    &\quad + \gamma^\varepsilon_+ \, \mathrm{d} L^{X_\varepsilon}_{\varepsilon/2}(t),
    \end{aligned}
    \end{equation}
    where $W$ is a standard Brownian motion, $L^{X_\varepsilon}_{\pm\varepsilon/2}$ is the symmetric local time of $X_\varepsilon$ at $\pm\varepsilon/2$, and the permeability parameters are given by $\gamma^\varepsilon_\pm = p^\varepsilon_\pm - q^\varepsilon_\pm \in (-1,1)$ with $p^\varepsilon_\pm + q^\varepsilon_\pm = 1$. At each membrane $\pm\varepsilon/2$, the derivatives of the scale function $s_\varepsilon$ satisfy the transmission conditions
    \[
    p_{\pm}^\varepsilon s_\varepsilon'((\pm \varepsilon/2)+) = q_{\pm}^\varepsilon s_\varepsilon'((\pm \varepsilon/2)-).
    \]
    This is a particular case of the SDE from \S\ref{expl:5.1}, and the scale functions $s_\varepsilon$ and the speed measures $m_\varepsilon$ can be computed explicitly in the same manner. 
    Assume that $ p^\varepsilon_+ \sim \lambda(\varepsilon)\alpha_+ \quad \text{and} \quad q^\varepsilon_- \sim \lambda(\varepsilon)\alpha_-$, where $\lambda(\varepsilon) > 0$ is such that $\lim_{\varepsilon \to 0+} \lambda(\varepsilon) = 0$. This means that the membranes push the diffusion toward 0 with high probability. The limit transmission condition is $\alpha_+s_0'(0+)=\alpha_- s'_0(0-).$

    If we assume that $\lim_{\varepsilon \to 0+} \frac{\varepsilon}{\lambda(\varepsilon)} = 0$, then it can be easily seen that the limit  speed measure does not accumulate mass at the origin  and the limit speed measure  satisfies relation $\mathrm{d} m_0(x) = \big(s_0'(x)\big)^{-1} \, \mathrm{d} x$. 

A similar situation occurs  if the  membranes push  the diffusion toward 0 with small probabilities proportional to $\lambda(\ve).$ In both cases the limiting process is a skew Brownian motion. We formulate the result as a theorem.    
    \begin{thm}\label{thm:lim-2 membranes-skew BM}
        Assume that a function $\lambda(\varepsilon) > 0, \ve>0$ is such that $\lim_{\varepsilon \to 0+} \lambda(\varepsilon) = 0$, $\lim_{\varepsilon \to 0+} \frac{\varepsilon}{\lambda(\varepsilon)} = 0$, and the parameters $p_\pm^\ve, q_\pm^\ve=1- p_\pm^\ve$ satisfy either of the following conditions
        \[
        p^\varepsilon_+ \sim \lambda(\varepsilon)\alpha_+ \quad \text{and} \quad q^\varepsilon_- \sim \lambda(\varepsilon)\alpha_-,
        \]
        or 
        \[
        q^\varepsilon_+ \sim  \frac{\lambda(\varepsilon)}{\alpha_+} \quad \text{and} \quad p^\varepsilon_- \sim \frac{\lambda(\varepsilon)}{\alpha_-},
        \]
        as $\ve \to0+. $ Here  $\alpha_\pm > 0$ are constants. If $X_\varepsilon(0) \Rightarrow \xi$ as $\varepsilon \to 0+$, then we have the weak convergence
        \[
        X_\varepsilon \Rightarrow X_0, \quad \varepsilon \to 0+,
        \]
        in the space of continuous functions, where $X_0$ is a skew Brownian motion starting from $X_0(0) \overset{d}{=} \xi$. Its skewness parameter $\gamma = p - q \in (-1,1), $ where $p,q>0, p+q=1,$ is equal to
           \[
        \gamma = \frac{\alpha_+ - \alpha_-}{\alpha_+ + \alpha_-}.
        \]
            \end{thm}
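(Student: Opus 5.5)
The plan is to verify the hypotheses of Theorem \ref{thm:open-infinite-intervals} (with $\ve\to0+$ in place of $n\to\infty$), i.e.\ \textbf{A}$_2$--\textbf{A}$_3$ with $I=\mbR$ and $s_\ve(\pm\infty)=\pm\infty$, and then conclude via Skorokhod's representation exactly as in Corollary \ref{corl:conv_weak}. First I compute $s_\ve$ and $m_\ve$ explicitly. Normalize $s_\ve(0)=0$ and $s_\ve'=1$ on $(\ve/2,\infty)$. The transmission conditions give $s_\ve'=p_+^\ve/q_+^\ve$ on $(-\ve/2,\ve/2)$ and $s_\ve'=\frac{p_+^\ve p_-^\ve}{q_+^\ve q_-^\ve}$ on $(-\infty,-\ve/2)$, with $\di m_\ve=(s_\ve')^{-1}\di x$. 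Since scale and speed are only defined up to the renormalization $(cs,c^{-1}m)$, I am free to rescale by a constant $c_\ve$ chosen to make the limits nondegenerate.

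\emph{Case 1:} $p_+^\ve\sim\lambda\alpha_+$ and $q_-^\ve\sim\lambda\alpha_-$, so $q_+^\ve,p_-^\ve\to1$. Then the middle slope is $\sim\lambda\alpha_+$, and the left slope is $\sim\alpha_+/\alpha_-$. I keep $c_\ve=1$. Pointwise, for $x>0$ we get $s_\ve(x)\to x$. For $x<0$, $s_\ve(x)=-s_\ve'|_{\rm mid}\,\ve/2+(x+\ve/2)\cdot(\text{left slope})\to(\alpha_+/\alpha_-)x$, since the middle contribution is $O(\lambda\ve)\to0$. Hence $s_0$ is piecewise linear with $\alpha_+s_0'(0+)=\alpha_-s_0'(0-)$. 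For the speed measure, $m_\ve$ has density $1$ on the right, $\alpha_-/\alpha_+$ on the left, and $\sim(\lambda\alpha_+)^{-1}$ on an interval of length $\ve$. The middle mass is therefore $\sim\ve/(\lambda\alpha_+)\to0$, which is exactly where the assumption $\ve/\lambda(\ve)\to0$ enters. Testing against continuous compactly supported $f$ then gives vague convergence to $\di m_0=(s_0')^{-1}\di x$, so \textbf{A}$_3$ holds.

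\emph{Case 2:} $q_+^\ve\sim\lambda/\alpha_+$ and $p_-^\ve\sim\lambda/\alpha_-$. Now the middle slope is $\sim\alpha_+/\lambda$, which blows up, so I first rescale by $c_\ve=\lambda$ so that the middle slope is $O(1)$. The left slope is $\frac{p_+p_-}{q_+q_-}\sim\frac{\alpha_+}{\lambda}\cdot\frac{\lambda}{\alpha_-}=\alpha_+/\alpha_-$; after rescaling by $\lambda$ the right and left slopes become $\lambda$ and $\lambda\alpha_+/\alpha_-$, both vanishing. This degenerates, so $c_\ve=\lambda$ is the wrong choice. I keep the normalization $c_\ve=1$ instead: the outer slopes are $1$ and $\to\alpha_+/\alpha_-$, and the middle slope $\sim\alpha_+/\lambda$ contributes only $O(\ve/\lambda)\to0$ to $s_\ve(x)$ for $x<0$. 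The middle density of $m_\ve$ is $\sim\lambda/\alpha_+\to0$, so its mass is negligible. The limit is therefore again the same skew Brownian motion. In both cases the limit condition $\alpha_+s_0'(0+)=\alpha_-s_0'(0-)$ matches \eqref{eq:skew-scale-speed} with $p/q=\alpha_+/\alpha_-$, which gives $\gamma=\frac{\alpha_+-\alpha_-}{\alpha_+ +\alpha_-}$.

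The remaining points are routine. Unboundedness $s_\ve(\pm\infty)=\pm\infty$ is immediate from the positive outer slopes. The pointwise convergence required in \textbf{A}$_2$ at $x=0$ holds trivially. The initial condition $X_\ve(0)\Rightarrow\xi$ is handled through the coupling with $\xi_\ve\to\xi$ a.s.\ as in Theorem \ref{thm:open-infinite-intervals}. The main obstacle is Case 1: the middle speed density blows up there, and the argument hinges on showing that the middle mass tends to zero under $\ve/\lambda\to0$, while also tracking the asymptotic equivalences $\sim$ carefully, since only the ratios $\alpha_+/\alpha_-$ survive in the limit.
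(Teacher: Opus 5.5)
Your proposal is correct and follows the paper's route. You compute $s_\ve$ and $m_\ve$ explicitly, normalized by $s_\ve'=1$ to the right of both membranes, check \textbf{A}$_2$--\textbf{A}$_3$ together with $s_\ve(\pm\infty)=\pm\infty$, and conclude via Theorem \ref{thm:open-infinite-intervals} with a Skorokhod coupling of the initial conditions. The hypothesis $\ve/\lambda(\ve)\to0$ is what makes the middle-layer speed mass vanish in the first regime (the point the paper stresses) and the middle-layer scale increment vanish in the second. Your abandoned rescaling by $c_\ve=\lambda$ can simply be deleted.
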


        The formula for the limiting parameter $\gamma$ can be interpreted as a limiting case of the product formula from Theorem \ref{thm:5.1}. Indeed, in both regimes   define $p_\varepsilon,q_\varepsilon>0 ,  p_\varepsilon+q_\varepsilon=1,$   by
        \[
        \begin{aligned}
       \frac{p_\ve}{q_\ve}
       &:=\frac{p^\varepsilon_-}{q^\varepsilon_-} \cdot \frac{p^\varepsilon_+}{q^\varepsilon_+} \\
       &\sim   \frac{\alpha_+}{\alpha_-}, \quad \varepsilon \to 0+.
        \end{aligned}
        \]
         Then 
         $$\gamma_\varepsilon := p_\varepsilon - q_\varepsilon = \frac{p_\varepsilon/q_\varepsilon - 1}{p_\varepsilon/q_\varepsilon + 1}
         \to \frac{\frac{\alpha_+}{\alpha_-} - 1}{\frac{\alpha_+}{\alpha_-} + 1} = \frac{\alpha_+ - \alpha_-}{\alpha_+ + \alpha_-},$$
                which matches the assertion of the theorem.

\subsection{Two membranes with small permeability. Limiting  absorption or reflection } \label{expl:unpermeable limit}
    Consider a family of stochastic processes $X_\varepsilon$ satisfying SDE \eqref{eq:SDE_two_membranes}. In contrast to the previous example, we now investigate the regime where the permeability of the membranes vanishes much faster than the distance between them, which corresponds to the condition $\lim_{\varepsilon \to 0+} \frac{\lambda(\varepsilon)}{\varepsilon} = 0$. This configuration leads to a complete decoupling of the state space, resulting in either absorption or reflection at the origin depending on the specific scaling behavior of the parameters. 
    Recall that  $p^\varepsilon_\pm=(1+\gamma^\varepsilon_\pm)/2$  and  $q^\varepsilon_\pm=(1-\gamma^\varepsilon_\pm)/2$.

    \begin{thm}
        Let  $\lambda(\varepsilon) > 0, \ve>0$ be such that $\lim_{\varepsilon \to 0+} \lambda(\varepsilon) = 0$, $\lim_{\varepsilon \to 0+} \frac{\lambda(\varepsilon)}{\varepsilon} = 0$. Assume that  $X_\varepsilon(0) \Rightarrow \xi$ as $\varepsilon \to 0+$.
        \begin{enumerate}
            \item[\rm(i)] If the parameters satisfy $p^\varepsilon_+  \sim  \lambda(\varepsilon)\alpha_+$ and $q^\varepsilon_- \sim \lambda(\varepsilon)\alpha_-$ for some $\alpha_\pm > 0$, then 
            \[
            X_\varepsilon \Rightarrow X_0, \quad \varepsilon \to 0+,
            \]
            where $X_0$ is a Brownian motion absorbed at the origin, starting from $X_0(0) \overset{d}{=} \xi$.
            
            \item[\rm(ii)] If the parameters satisfy $q^\varepsilon_+  \sim  \frac{\lambda(\varepsilon)}{\alpha_+}$ and $p^\varepsilon_-  \sim  \frac{\lambda(\varepsilon)}{\alpha_-}$ for some $\alpha_\pm > 0$ and $\xi\neq 0$ a.s., then 
            \[
            X_\varepsilon \Rightarrow X_0, \quad \varepsilon \to 0+,
            \]
            where $X_0$ is a Brownian motion with reflection at the origin. 
            Conditionally on \(\{\xi>0\}\), \(X_0\) is a Brownian motion reflected on \([0,\infty)\); conditionally on \(\{\xi<0\}\), it is a Brownian motion reflected on \((-\infty,0]\). 
        \end{enumerate}
    \end{thm}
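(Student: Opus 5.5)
Both parts will follow from the main theorems of \S\ref{sec:Main results} once we compute $s_\varepsilon$ and $m_\varepsilon$ explicitly.

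\textbf{Normalization.} We normalize $s_\varepsilon(0)=0$ and take $s_\varepsilon'=1$ on the middle interval $(-\varepsilon/2,\varepsilon/2)$. The transmission conditions then give
\[
s_\varepsilon'=q^\varepsilon_+/p^\varepsilon_+ \ \text{on } (\varepsilon/2,\infty),\qquad s_\varepsilon'=p^\varepsilon_-/q^\varepsilon_- \ \text{on } (-\infty,-\varepsilon/2),
\]
and $m_\varepsilon(\di x)=(s_\varepsilon'(x))^{-1}\di x$. Since the diffusion law is invariant under the affine renormalization $s\mapsto cs$, $m\mapsto c^{-1}m$, we may rescale differently on each side. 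The point of the computation is to choose the normalization under which the limits fit the hypotheses of Theorem~\ref{thm:conv_absorbing} in case (i) and Theorem~\ref{thm:regular point} in case (ii).

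\textbf{Case (i).} Here $p^\varepsilon_+\sim\lambda\alpha_+$ and $q^\varepsilon_-\sim\lambda\alpha_-$, so the outer slopes are $\sim 1/(\lambda\alpha_\pm)$, which is huge. Multiply $s_\varepsilon$ by $c_\varepsilon=\lambda\alpha_+$, so that $s_\varepsilon'\to1$ on $(\varepsilon/2,\infty)$, and multiply $m_\varepsilon$ by $c_\varepsilon^{-1}$. Then:
\begin{itemize}
\item on $x>0$, $s_\varepsilon(x)\to x$ and $m_\varepsilon\to\di x$ vaguely on $(0,\infty)$, which gives $\mathbf A_3'$ and $\mathbf B_2$ with $s_0(x)=x$;
\item on $x<0$, $s_\varepsilon(x)\to (\alpha_+/\alpha_-)x<0$, so $\mathbf A_2'$ holds with $a=-\infty$;
\item the middle interval has scale length $\lambda\alpha_+\varepsilon\to0$ and $m_\varepsilon$-mass $\varepsilon/(\lambda\alpha_+)\to\infty$, because $\lambda/\varepsilon\to0$.
\end{itemize}
Hence $(m_\varepsilon\circ s_\varepsilon^{-1})([-\delta,\delta])\to\infty$, which is $\mathbf B_3$. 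For $\mathbf B_1$, when $\xi\neq0$ we use Remark~\ref{remk:convX and s(X)}. On $\{\xi=0\}$ the limit is identically $0$, and $s_\varepsilon(X_\varepsilon(0))\to0$ follows from the uniform convergence $s_\varepsilon\to s$ near $0$, since $s$ is continuous at $0$. The part of the law with $\xi<0$ is handled by the mirror argument, normalizing by $\lambda\alpha_-$ instead; condition on the sign of $\xi$ to separate the two. Theorem~\ref{thm:conv_absorbing} then gives absorption at $0$.

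\textbf{Case (ii).} Here $q^\varepsilon_+\sim\lambda/\alpha_+$ and $p^\varepsilon_-\sim\lambda/\alpha_-$, so the outer slopes are tiny, $\sim\lambda/\alpha_\pm$. Treat $\xi>0$; the case $\xi<0$ is symmetric. Multiply $s_\varepsilon$ by $c_\varepsilon=\alpha_+/\lambda$, so that $s_\varepsilon'\to1$ on the right. Then:
\begin{itemize}
\item the middle interval maps to an interval of scale length $\alpha_+\varepsilon/\lambda\to\infty$, and it carries $\wt m_\varepsilon$-density $c_\varepsilon^{-2}\to0$;
\item so $s_\varepsilon(x)\to-\infty$ for every $x<0$, which gives $\mathbf A_2''$ with $s_0(x)=x\1_{x\ge0}$ and $s_0=-\infty$ on $x<0$.
\end{itemize}
For $\mathbf A_3''$ we need $\wt m_\varepsilon\to\1_{x\ge0}\di x$ vaguely on $\mbR$. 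On $(0,\infty)$ this is immediate. On bounded subsets of $(-\infty,0)$ the mass is at most (length)$\times c_\varepsilon^{-2}\to0$, since the left outer region is pushed beyond $-\alpha_+\varepsilon/(2\lambda)\to-\infty$. $\mathbf B_1$ holds by Remark~\ref{remk:convX and s(X)} because $\xi>0$. Theorem~\ref{thm:regular point} then yields convergence to reflected Brownian motion on $[0,\infty)$.

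For a general $\xi\neq0$, we use the Skorokhod coupling and condition on the sign of $\xi$. The law of $X_\varepsilon$ is the mixture over initial points, and on $\{\xi>0\}$ (respectively $\{\xi<0\}$) the corresponding normalization applies.

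\textbf{Main obstacle.} The delicate step is verifying the vague-convergence conditions in the \emph{transformed} coordinates, $\mathbf B_3$ in case (i) and $\mathbf A_3''$ in case (ii), rather than for $m_\varepsilon$ itself. This is where $\lambda/\varepsilon\to0$ is used. Some care is also needed with initial points near $0$ in case (i), and with the fact that in case (ii) the limit's side is chosen by $\xi$, which forces the sign split.
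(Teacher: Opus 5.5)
Your route is the paper's. Case (i) goes through Theorem~\ref{thm:conv_absorbing}: the middle layer has scale length $\lambda\alpha_+\varepsilon\to0$ and speed mass $\varepsilon/(\lambda\alpha_+)\to\infty$. Case (ii) goes through Theorem~\ref{thm:regular point}, because the scale length of the layer blows up, and the sign of $\xi$ selects the half-line. The paper only sketches the two-sided extension and calls it straightforward; your Skorokhod coupling, followed by conditioning on the sign of the limit and using a separate normalization for each side, is a legitimate way to carry it out.

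There is one error in (ii) that you need to fix. You keep the normalization $s_\varepsilon(0)=0$ and multiply by $c_\varepsilon=\alpha_+/\lambda$. Then $s_\varepsilon(\varepsilon/2)=\alpha_+\varepsilon/(2\lambda)\to+\infty$, so $s_\varepsilon(x)\to+\infty$ for every $x>0$ as well. Moreover, $\wt m_\varepsilon$ has density $c_\varepsilon^{-2}\to0$ on $(0,\alpha_+\varepsilon/(2\lambda))$, so $\wt m_\varepsilon\to0$ vaguely on all of $\mathbb{R}$. As written, $\mathbf A_2''$ and $\mathbf A_3''$ therefore fail. Your own bound ``the left outer region is pushed beyond $-\alpha_+\varepsilon/(2\lambda)$'' shows you were still centered at $0$.

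The fix is to use the additive freedom in $s_\varepsilon$ as well, for instance by imposing $s_\varepsilon(\varepsilon/2)=0$. With this centering:
\begin{itemize}
\item the layer maps onto $(-\alpha_+\varepsilon/\lambda,0)$ with $\wt m_\varepsilon$-density $c_\varepsilon^{-2}$;
\item $s_\varepsilon(x)\to x$ for $x>0$ and $s_\varepsilon(x)\to-\infty$ for $x<0$;
\item your verification of $\mathbf A_3''$ and $\mathbf B_1$ then goes through unchanged.
\end{itemize}
In (i) the choice of centering does not matter, since the layer's scale length tends to $0$.
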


    The proof of this theorem relies on analyzing the asymptotic behavior of the scale functions $s_\varepsilon$ and speed measures $m_\varepsilon$. In case \rm(i), the speed measure of the central layer tends to infinity, i.e., $m_\varepsilon([-\varepsilon/2, \varepsilon/2]) \to \infty$ as $\varepsilon \to 0+$, which turns the origin into an absorbing boundary; the result then follows from Theorem \ref{thm:conv_absorbing}. In case \rm(ii), the scale distance across the central layer tends to infinity, effectively separating the state space into two disconnected  components $(-\infty, 0-]$ and $ [0+, \infty)$, each with a reflecting boundary at $0$, which allows the application of Theorem \ref{thm:regular point}. Note that while Theorems  \ref{thm:conv_absorbing} and \ref{thm:regular point}   were formulated for a half-line $I_0$, their extension to the entire real line $\mathbb{R}$ in this decoupled setting is straightforward and does not alter the convergence argument.

\subsection{Two membranes with small permeability. Skew-sticky limits}

    Consider a family of stochastic processes $X_\varepsilon$ satisfying SDE \eqref{eq:SDE_two_membranes}. We now investigate the critical regime where the permeability parameters vanish at the same rate as the distance between the membranes, i.e., $\lambda(\varepsilon) = \varepsilon$. As we will see, this configuration leads to the appearance of an atom at the origin in the limiting speed measure, which generates a sticky effect.

    Denote $p^\varepsilon_\pm := \frac{1+\gamma_\pm^\varepsilon}{2}$ and $q^\varepsilon_\pm := \frac{1-\gamma_\pm^\varepsilon}{2}$. We assume that for sufficiently small $\varepsilon$, the parameters scale as $p^\varepsilon_+ \sim \alpha_+ \varepsilon$ and $q^\varepsilon_- \sim \alpha_- \varepsilon$ as $\varepsilon\to0+$ with $\alpha_\pm > 0$.

    Let us construct the scale function $s_\varepsilon$ normalized such that $s_\varepsilon(x) = \varepsilon x$ for $|x| \leq \varepsilon/2$. Then, its derivative satisfies $s'_\varepsilon(x) =  \frac{\varepsilon q^\varepsilon_+}{p^\varepsilon_+} \sim \frac{1}{\alpha_+}$ for $x > \varepsilon/2$, and $s'_\varepsilon(x) = \frac{\varepsilon p^\varepsilon_-}{q^\varepsilon_-}\sim \frac{1}{\alpha_-}$ for $x < -\varepsilon/2$. The corresponding speed measure is given by
    \[
    \begin{aligned}
    \mathrm{d} m_\varepsilon(x)
    = \Big(&\frac{ q^\varepsilon_-}{\varepsilon p^\varepsilon_-}\mathbb{I}_{\{x < -\varepsilon/2\}}
      + \frac{1}{\varepsilon} \mathbb{I}_{\{|x| \leq \varepsilon/2\}} \\
    &+ \frac{ p^\varepsilon_+}{\varepsilon q^\varepsilon_+}\mathbb{I}_{\{x > \varepsilon/2\}} \Big) \mathrm{d} x.
    \end{aligned}
    \]

    As $\varepsilon \to 0+$, the limits of $s_\varepsilon$ and $m_\varepsilon$ yield the scale function and the speed measure of a regular diffusion $X_0$ on $\mathbb{R}$:
    \[
    \begin{aligned}
    s_0(x)
    &= \frac{x}{\alpha_-} \mathbb{I}_{\{x < 0\}} + \frac{x}{\alpha_+} \mathbb{I}_{\{x \geq 0\}}, \\
    \mathrm{d} m_0(x)
    &= \left( \alpha_- \mathbb{I}_{\{x < 0\}} + \alpha_+ \mathbb{I}_{\{x \geq 0\}} \right) \mathrm{d} x + \delta_0(\mathrm{d} x),
    \end{aligned}
    \]
    where $\delta_0$ is the unit Dirac mass concentrated at $0$.

    It follows from \eqref{eq:skewstickyscale} that $X_0$ is a   skew-sticky Brownian motion, where $p = \frac{\alpha_+}{\alpha_- + \alpha_+}$ and $q = \frac{\alpha_-}{\alpha_- + \alpha_+}$.   The boundary condition \eqref{eq:sticky_boundary_cond}  takes the form:
    \begin{equation}
        \label{eq:sticky-limit-boundary}
           \frac{\alpha_+}{\alpha_- + \alpha_+} u'(0+) -  \frac{\alpha_-}{\alpha_- + \alpha_+} u'(0-) =   \frac{2}{2(\alpha_- + \alpha_+)}  u''(0).
    \end{equation}
        Theorem \ref{thm:open-infinite-intervals} implies the following   result.

    \begin{thm}
        Let $X_\varepsilon$ be the solution to SDE \eqref{eq:SDE_two_membranes} under the scaling assumptions described above, and let $X_\varepsilon(0) \Rightarrow \xi$ as $\varepsilon \to 0+$. Then, we have the weak convergence
        \[
        X_\varepsilon \Rightarrow X_0, \quad \varepsilon \to 0+,
        \]
        in the space of continuous functions, where the limit process $X_0$ is a skew-sticky Brownian motion $X^{\text{sticky}(\lambda)}_{\text{skew}(\gamma)}$ with the skewness and stickiness parameters given respectively by
        \[
        \gamma =   \frac{\alpha_+ - \alpha_-}{\alpha_- + \alpha_+}, \quad \lambda = \frac{2}{\alpha_- + \alpha_+}.
        \]
    \end{thm}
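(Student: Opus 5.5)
The plan is to verify the hypotheses of Theorem \ref{thm:open-infinite-intervals} for the explicit $s_\varepsilon$, $m_\varepsilon$ above. Then we identify the limit through \eqref{eq:skewstickyscale}. All processes live on $I_\varepsilon=\mbR$. We must check that $s_\varepsilon(\pm\infty)=\pm\infty$, that $s_\varepsilon\to s_0$ pointwise (condition \textbf{A}$_2$), and that $m_\varepsilon\to m_0$ vaguely on $\mbR$ (condition \textbf{A}$_3$). The starting points are handled by Skorokhod's representation exactly as in Corollary \ref{corl:conv_weak}. Note that $X_\varepsilon(0)\Rightarrow\xi$ suffices here, since $s_\varepsilon\to s_0$ locally uniformly and hence $s_\varepsilon(\xi_\varepsilon)\to s_0(\xi_0)$ (Lemmas \ref{lem: conv monot cont} and \ref{lem:conv compositions}).

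\emph{Scale functions.} With the normalization $s_\varepsilon(x)=\varepsilon x$ on $[-\varepsilon/2,\varepsilon/2]$, the function $s_\varepsilon$ is affine outside. Its slope is $\varepsilon q^\varepsilon_+/p^\varepsilon_+\to 1/\alpha_+$ on the right and $\varepsilon p^\varepsilon_-/q^\varepsilon_-\to1/\alpha_-$ on the left, because $q^\varepsilon_+\to1$ and $p^\varepsilon_-\to1$. Since $|s_\varepsilon(\pm\varepsilon/2)|=\varepsilon^2/2\to0$, we obtain $s_\varepsilon(x)\to s_0(x)$ for every $x$, uniformly on compacts. The slopes are positive, so $s_\varepsilon(\pm\infty)=\pm\infty$ for small $\varepsilon$, and the same holds for $s_0$.

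\emph{Speed measures.} Take $f$ continuous with compact support. The outer parts of $\int f\,\di m_\varepsilon$ have densities $q^\varepsilon_-/(\varepsilon p^\varepsilon_-)\to\alpha_-$ and $p^\varepsilon_+/(\varepsilon q^\varepsilon_+)\to\alpha_+$, which are constant on the respective half-lines. Dominated convergence therefore gives $\alpha_-\int_{-\infty}^0 f+\alpha_+\int_0^\infty f$. The middle part equals $\varepsilon^{-1}\int_{-\varepsilon/2}^{\varepsilon/2}f(x)\,\di x\to f(0)$ by continuity of $f$. This produces the atom $\delta_0$, so $m_\varepsilon\to m_0$ vaguely. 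Theorem \ref{thm:open-infinite-intervals} then gives almost sure locally uniform convergence of the coupled copies, and hence $X_\varepsilon\Rightarrow X_0$ in $C([0,\infty))$.

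\emph{Identification.} The only step needing care is matching $(s_0,m_0)$ with the normalization in \eqref{eq:skewstickyscale}, because scale and speed are determined only up to $s\mapsto cs$, $m\mapsto c^{-1}m$. Put $p=\alpha_+/(\alpha_++\alpha_-)$ and $q=\alpha_-/(\alpha_++\alpha_-)$, and take $c=(\alpha_++\alpha_-)^{-1}$. Then $s_0(x)=c\,x/p$ for $x\ge0$ and $s_0(x)=c\,x/q$ for $x<0$. The speed measure is
\[
m_0=c^{-1}\Big(p\1_{\{x\ge0\}}\di x+q\1_{\{x<0\}}\di x+(\alpha_++\alpha_-)^{-1}\delta_0\Big).
\]
Comparing with \eqref{eq:skewstickyscale} gives $\lambda/2=(\alpha_++\alpha_-)^{-1}$, that is, $\lambda=2/(\alpha_++\alpha_-)$, and $\gamma=p-q=(\alpha_+-\alpha_-)/(\alpha_++\alpha_-)$. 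Since the law of a regular diffusion is determined by its scale–speed pair up to this equivalence (Theorem \ref{thm:reg_diff_inv}), the limit is the claimed skew-sticky Brownian motion.
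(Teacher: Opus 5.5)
Your proposal is correct and follows the paper's route: compute $s_\varepsilon,m_\varepsilon$ explicitly, check the hypotheses of Theorem \ref{thm:open-infinite-intervals}, and identify the limit pair $(s_0,m_0)$ with \eqref{eq:skewstickyscale}. Your explicit rescaling by $c=(\alpha_++\alpha_-)^{-1}$ spells out the identification of $\gamma$ and $\lambda$, which the paper leaves implicit.
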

    \begin{remk}
        If we formally pass to a limit in \eqref{eq:sticky-limit-boundary} as $\alpha_\pm\to \infty$ and $\alpha_+/\alpha_-=p/q=const$, then we obtain boundary conditions for a skew Brownian motion with permeability parameter $\gamma= \frac{\alpha_+-\alpha_-}{\alpha_++\alpha_-}$.This is consistent with Theorem \ref{thm:lim-2 membranes-skew BM}.
    \end{remk}

\subsection{Two membranes with small permeability. Limiting snapping-out Brownian motion}
 
    Consider a family of stochastic processes $X_\varepsilon$ satisfying SDE \eqref{eq:SDE_two_membranes}. We now investigate the critical regime where the membranes become ``tight'' or highly impermeable from the inside of the layer. Specifically, we assume that as $\varepsilon \to 0+$, the permeability parameters inside the layer $[-\varepsilon/2, \varepsilon/2]$ vanish at the rate
    \[
    q^\varepsilon_+ \sim \frac{\varepsilon}{\alpha_+},\quad \text{and} \quad p^\varepsilon_- \sim \frac{\varepsilon}{\alpha_-} \quad \text{as } \ve\to0+,
    \]
    where $\alpha_\pm > 0$. In this configuration, the probabilities of crossing the membranes toward the interior of the layer are small. Recall that if they are not sufficiently small, as seen in \S\ref{expl: two membranes skew BM}, the limit is a skew Brownian motion. Conversely, if they are too small, the limiting process cannot pass through the membranes and we obtain reflection, as shown in \S \ref{expl:unpermeable limit}. The situation in the current example represents an intermediate case between these two regimes.

    Let us construct the scale function $s_\varepsilon$ normalized such that $s'_\varepsilon(x) = \frac{1}{\varepsilon}$ for $|x| \leq \varepsilon/2$ and $s_\varepsilon(0)=0$. Then, for the regions outside the layer, we have the following behavior. If   $x > \varepsilon/2$, then its derivative satisfies $s'_\varepsilon(x) = \frac{1}{\varepsilon} \frac{q^\varepsilon_+}{p^\varepsilon_+} \sim \frac{1}{\alpha_+ }, \ve\to0+$, and  for $x < -\varepsilon/2$ we have  $s'_\varepsilon(x) = \frac{1}{\varepsilon} \frac{p^\varepsilon_-}{q^\varepsilon_-} \sim \frac{1}{\alpha_-  }, \ve\to0+$. Therefore, for any fixed $x \neq 0$: 
    \[
    \lim_{\ve\to0+} s'_\varepsilon(x) = \frac{1}{\alpha_+} \mathbb{I}_{\{x > 0\}} + \frac{1}{\alpha_-} \mathbb{I}_{\{x < 0\}},
    \]
    and  the functions $s_\pm$ from Theorem \ref{thm:limit_SNOdiffosion} are given by 
    \[
    s_-(x) = \frac{x}{\alpha_-} - \frac12, \quad x \leq 0, \quad \text{and} \quad s_+(x) = \frac{x}{\alpha_+} + \frac12, \quad x \geq 0.
    \]

    The corresponding limiting speed measures $m_\pm$ are absolutely continuous with densities satisfying
    \[
    \mathrm{d} m_-(x) = \alpha_- \mathrm{d} x, \quad x \leq 0, \quad \text{and} \quad \mathrm{d} m_+(x) = \alpha_+ \mathrm{d} x, \quad x \geq 0.
    \]

    Applying Theorem \ref{thm:limit_SNOdiffosion}, we obtain the weak convergence of the processes.

    \begin{thm}
        Let $X_\varepsilon$ be the solution to SDE \eqref{eq:SDE_two_membranes} under the scaling assumptions described above, and let $X_\varepsilon(0) \Rightarrow \xi$ as $\varepsilon \to 0+$ with $\xi \neq 0$ a.s. Then, we have the weak convergence
        \[
        X_\varepsilon \Rightarrow X_0, \quad \varepsilon \to 0+,
        \]
        in the space of continuous functions, where the limit process $X_0$ is a snapping-out diffusion with scale functions $s_\pm$, speed measures $m_\pm$, and symmetric transmission intensities
$\lambda_\pm=\frac12$.
    \end{thm}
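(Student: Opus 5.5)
The plan is to verify the hypotheses of Theorem~\ref{thm:limit_SNOdiffosion} for the explicit scale functions and speed measures of $X_\varepsilon$, and then read off the transmission intensities. These are the model of \S\ref{expl:many membranes} with $d=2$, membranes at $\pm\varepsilon/2$, and $\varepsilon\to0+$ playing the role of $n\to\infty$.

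\textbf{Step 1: scale functions.} Normalize $s_\varepsilon(0)=0$ and $s'_\varepsilon=1/\varepsilon$ on $[-\varepsilon/2,\varepsilon/2]$. Then $s_\varepsilon(\pm\varepsilon/2)=\pm\frac12$, and for $x>\varepsilon/2$
\[
s_\varepsilon(x)=\tfrac12+\tfrac{q^\varepsilon_+}{\varepsilon p^\varepsilon_+}\,(x-\varepsilon/2).
\]
Since $q^\varepsilon_+/\varepsilon\to1/\alpha_+$ and $p^\varepsilon_+\to1$, this gives $s_\varepsilon(x)\to s_+(x)=\frac12+x/\alpha_+$ for each fixed $x>0$. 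The case $x<0$ is symmetric and gives $s_-(x)=-\frac12+x/\alpha_-$. Hence condition~2 holds with $s_-(0)=-\frac12<\frac12=s_+(0)$. Condition~1 is immediate: $s_\pm$ and $m_\pm$ describe Brownian motions reflected on the half-lines (rescaled in space), and $0\pm$ are regular boundaries because $m_\pm$ is locally finite near $0$.

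\textbf{Step 2: speed measures (condition~3).} In scale coordinates, $\wt m_\varepsilon=m_\varepsilon\circ s_\varepsilon^{-1}$ has the following density:
\[
\big(s'_\varepsilon\big)^{-2}\circ s_\varepsilon^{-1}
=\begin{cases}
\varepsilon^2 & \text{on } (-\tfrac12,\tfrac12),\\
\big(\varepsilon p^\varepsilon_+/q^\varepsilon_+\big)^2\to\alpha_+^2 & \text{on } (\tfrac12,\infty),\\
\big(\varepsilon q^\varepsilon_-/p^\varepsilon_-\big)^2\to\alpha_-^2 & \text{on } (-\infty,-\tfrac12).
\end{cases}
\]
The limits $\wt m_\pm=m_\pm\circ s_\pm^{-1}$ have density $\alpha_\pm^2$ on $s_\pm$ of the corresponding half-line, and no mass on $(-\frac12,\frac12)$. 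Take $f$ continuous with compact support in $(s_-(0),\infty)=(-\frac12,\infty)$. Then $\int f\,\di\wt m_\varepsilon\to\int f\,\di\wt m_+$ by dominated convergence, because the densities converge uniformly on compacts away from the single point $\frac12$ and the middle contribution is $O(\varepsilon^2)$. The case $f$ supported in $(-\infty,\frac12)$ is symmetric. This step is the only place where the scaling regime matters. It is also the step to check most carefully: the middle layer has scale width of order one (this produces the jump of $s$) and vanishing $\wt m$-mass, which is exactly what separates this regime from the skew regime of \S\ref{expl: two membranes skew BM} and the reflecting regime of \S\ref{expl:unpermeable limit}.

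\textbf{Step 3: conclusion.} Condition~4 of Theorem~\ref{thm:limit_SNOdiffosion} requires a deterministic start $x\neq0$. The random start is handled as in Corollary~\ref{corl:conv_weak}: use Skorokhod's representation, then condition on the initial values (a.s. nonzero by $\xi\neq0$), and finally integrate over the law of $\xi$. Theorem~\ref{thm:limit_SNOdiffosion} then yields convergence to the snapping-out diffusion with scale functions $s_\pm$, speed measures $m_\pm$, and intensities
\[
\lambda_\pm=\frac{1}{2(s_+(0)-s_-(0))}=\frac{1}{2\cdot 1}=\frac12 .
\]
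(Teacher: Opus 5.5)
Your proposal is correct and takes essentially the same route as the paper. You normalize $s_\varepsilon$ so that the layer $[-\varepsilon/2,\varepsilon/2]$ has unit scale width, identify $s_\pm(x)=\pm\frac12+x/\alpha_\pm$ and $m_\pm(\di x)=\alpha_\pm\,\di x$, and apply Theorem~\ref{thm:limit_SNOdiffosion} to obtain $\lambda_\pm=1/(2(s_+(0)-s_-(0)))=\frac12$. The paper leaves two points implicit that you supply: the explicit check of condition~3 in scale coordinates (density $\varepsilon^2$ on $(-\frac12,\frac12)$), and the Skorokhod/conditioning reduction from a random start $\xi\neq0$ to the deterministic start required in condition~4.
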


   \begin{remk}
        The boundary conditions \eqref{eq:SNOD-boundary conditions} for the limiting process $X_0$ take the following form:
    \begin{equation}
    \label{eq:snob_boundary_cond}
    \alpha_+ u'(0+) = \alpha_- u'(0-) =  {u(0+) - u(0-)} .
    \end{equation}
 If we formally pass to a limit as  $\alpha_\pm\to\infty$ and $\alpha_+/\alpha_-=p/q=const,$ then the limiting boundary conditions will become boundary conditions for a skew Brownian motion with $\gamma=\frac{\alpha_+-\alpha_-}{\alpha_++\alpha_-}$. It was proved in \cite{bobrowski2025snapping} that the corresponding snapping-out Brownian motions indeed converge to the skew Brownian motion, see also \S \ref{subsection:from SNOB to skew} below.
       \end{remk}
       \begin{remk}
           It was mentioned in Remark \ref{remk:SNOD-different representation} that a snapping-out diffusion can be represented in various ways. Formulas from  Remark \ref{remk:SNOD-different representation} yield  that  the process $X_0$ is a snapping-out Brownian motion with transmission intensities $(2\alpha_\pm)^{-1}.$ This also agrees with  boundary conditions \eqref{eq:SNOD-boundary conditions} from Remark \ref{remk:boundary SNOD}.
       \end{remk}
       \begin{remk}
    In \cite{lejay2016snapping} the snapping-out Brownian motion was considered for $\lambda_-=\lambda_+$ and the symmetric local time rather than the local time associated with the scale function and the speed measure of the diffusion. Hence, our formulas differ from  \cite{lejay2016snapping} by a multiplicative factor of 2. Moreover, in \cite{lejay2016snapping}, when the local time reaches an
exponentially distributed random level, the snapping-out diffusion selects
its sign with probability $1/2$. This should be taken into account when comparing results. It can be seen that the    process $Y^\varepsilon$ from \cite[Proposition 4]{lejay2016snapping} is a  particular case  of SDE \eqref{eq:SDE_two_membranes}. The limiting process  and the  transmission conditions at 0 coincide with ours, up to the corresponding normalization. 
\end{remk}

\subsection{Approximation of   snapping-out Brownian motion by SDEs with drifts repelling from the origin} 
Consider a sequence of SDEs 
\[
\di X_n(t) = a_n(X_n(t)) \di t + \di W(t), t\geq 0,
\]
where $X_n(0)=x_0\neq 0.$

Assume that $a_n(x)=\alpha_n a (nx),$ where  $a$ is an odd piecewise continuous function  vanishing outside of $[-1,1]$ and $a(x)>0, x\in(0,1).$ 
Then the function $s_n(x)=-s_n(-x)$,
$$
s_n(x)=
\begin{cases}
    \int_0^x \exp{\Big(2\int_{y}^{1/n} a_n(z) \di z}\Big) \di y, \ x\in[0,1/n];\\
    \int_0^{1/n} \exp{\Big(2\int_{y}^{1/n} a_n(z) \di z}\Big) \di y +(x-1/n), \ x>1/n,
\end{cases}
$$
is the  scale function of $X_n.$  

Denote $A(x):=\int_x^1 a(v) \di v$. Then 
\[
s_n(1/n)= \int_0^{1/n} \exp{\Big(2\int_{y}^{1/n} a_n(z) \di z}\Big) \di y =
\int_0^{1/n} \exp{\Big(2\int_{y}^{1/n} \alpha_n a(nz) \di z}\Big) \di y =
\]
\[
\int_0^{1/n} \exp{\Big(2\int_{ny}^{1 } n^{-1}\alpha_n a(v) \di v}\Big) \di y =
\int_0^{1 } n^{-1} \exp{\Big(2\int_{u}^{1 } n^{-1}\alpha_n a(v) \di v}\Big) \di u 
\]
\[
=\int_0^{1 } n^{-1} \exp \Big(2 n^{-1}\alpha_n A(u) \Big) \di u.
\]
Select $(\alpha_n)$ such that $\limn \int_0^{1 } n^{-1} \exp \Big(2 n^{-1}\alpha_n A(u) \Big) \di u=h>0.$ Then functions $s_\pm$ and measures $m_\pm$ from Theorem \ref{thm:limit_SNOdiffosion} are $s_+(x)=x+h, x>0,\quad s_-(x)=x-h, x<0,$   $m_+(\di x)=\1_{x>0} \di x,$ and $ m_-(\di x)=\1_{x<0} \di x.$ It follows from Theorem \ref{thm:limit_SNOdiffosion} that   $(X_n)$ converges in distribution to the SNOB process starting from $x_0$ with transmission intensities $\lambda_\pm=\frac{1}{4h}$.   Note that the asymptotics of the Laplace integral $\int_0^1 \exp \Big(\beta A(u) \Big) \di u$ as $\beta\to+\infty$ is well known for many functions. For example, if $a(x)=1, x\in(0,1),$ then 
$$
\int_0^1 \exp \Big(\beta A(u) \Big) \di u= \int_0^1 \exp \Big(\beta v \Big) \di v = \frac{e^\beta-1}{\beta}\sim e^\beta/\beta, \quad \beta\to+\infty.
$$
Condition  $\limn \int_0^{1 } n^{-1} \exp \Big(2 n^{-1}\alpha_n A(u) \Big) \di u=h$ is satisfied if $\limn \frac{e^{\frac{2\alpha_n}{n}}}{2\alpha_n}=h. $
Sufficient condition is, for example, $\alpha_n=2^{-1} n \ln (h n \ln n);$ see also \cite{mandrekar2016brownian}.

\subsection{Approximation of a skew Brownian motion by a sequence of  SNOB processes}\label{subsection:from SNOB to skew}
 Let $(X_n)$ be  a sequence of snapping-out Brownian motions with transmission intensities $\lambda^n_{\pm}=c_n /\alpha_\pm$, where $\alpha_\pm>0$ and  $\limn c_n=+\infty.$ Assume that they have representation $X_n(t)=s_n^{-1}(B(A_n^{-1}(t)))$, see \eqref{eq:SNOB_s,m} in  Remark \ref{remk:SNOB construction},  with the same Brownian motion, where
\[
\begin{aligned}
 s_n^{-1}(x)
 &=\frac{\alpha_-}{\alpha_+}(x+\frac{\alpha_+}{2 c_n})\1_{x<-\frac{\alpha_+}{2c_n}}+  x\1_{x>0}, \\
 \wt m_n(\di x)
 &= (\frac{\alpha_-^2}{\alpha_+^2} \1_{x<-\frac{\alpha_+}{2c_n}}+ \1_{x>0}) \di x.
\end{aligned}
\]
 It can be seen that $\limn s_n^{-1}(x)=\frac{\alpha_-}{\alpha_+} x \1_{x<0}  +  x\1_{x>0}=:s^{-1}(x)$ locally uniformly, where $s(x)= \frac{\alpha_+}{\alpha_-} x \1_{x<0}  +  x\1_{x>0},$ and $(\wt m_n)$ converges vaguely to $\wt m(\di x)=(\frac{\alpha_-^2}{\alpha_+^2} \1_{x<0}+ \1_{x>0}) \di x$. Hence, by Corollary \ref{corl:integrals}, $\limn A_n(t)=A(t):=\int_\mbR l_B(t,y)\wt m (\di y)= \int_\mbR l_B(t,s(x))  m (\di x),$ locally uniformly,  where 
 $m (\di x)= (\frac{\alpha_- }{\alpha_+ } \1_{x<0}+ \1_{x>0}) \di x$. Since $A$ is strictly increasing,  Corollary \ref{corl:5.1} yields locally uniform  convergence of inverses $\limn A_n^{-1}(t)= A^{-1}(t)$. Therefore, $\limn X_n(t)=s^{-1}(B(A^{-1}(t)))$ locally uniformly, and the limiting process is a skew Brownian motion with  $\gamma=\frac{\alpha_+-\alpha_-}{\alpha_++\alpha_-}$. The corresponding result was obtained in
\cite{bobrowski2025approximation} by analytic methods.

\section{Appendix}\label{sec:Appendix}
Here we collect well-known analytical facts that are frequently used throughout the paper. 

\subsection{Convergence of functions}
\begin{lem}\cite[\S 12.5]{Whitt}\label{lem: conv monot cont}
Let $f_n:[a,b] \to \mbR$, $n \geq 0$, be a sequence of non-decreasing càdlàg functions. Then $\lim_{n \to \infty} f_n(t) = f_0(t)$ for all $t$ belonging to a dense subset of $[a,b]$ including $a$ and $b$ if and only if $f_n \to f_0$ as $n \to \infty$ in Skorokhod's $M_1$ topology.

Moreover, if $f_0$ is continuous, then the convergence is uniform, i.e., $\lim_{n \to \infty} \sup_{t \in [a,b]} |f_n(t) - f_0(t)| = 0$.
\end{lem}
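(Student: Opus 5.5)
The statement to prove is the appendix lemma (from Whitt): for non-decreasing càdlàg $f_n$ on $[a,b]$, convergence on a dense set containing $a,b$ is equivalent to $M_1$ convergence, and continuity of $f_0$ upgrades this to uniform convergence. I would prove the two directions of the equivalence separately, using the characterization of $M_1$ convergence through parametric representations of completed graphs. The completed graph $\Gamma_f$ of a monotone càdlàg $f$ is the set of $(t,x)$ with $x$ lying between $f(t-)$ and $f(t)$. A parametric representation is a continuous map $(u,r):[0,1]\to\Gamma_f$ that is nondecreasing in the graph order. Then $f_n\to f_0$ in $M_1$ if and only if there are parametric representations $(u_n,r_n)$ of $f_n$ and $(u_0,r_0)$ of $f_0$ with $\|u_n-u_0\|_\infty+\|r_n-r_0\|_\infty\to 0$.

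For the implication ($M_1$ $\Rightarrow$ pointwise on a dense set), I take $D$ to be $\{a,b\}$ together with the continuity points of $f_0$. Since $f_0$ is monotone, its discontinuities are countable, so $D$ is dense. Endpoint convergence is immediate, because parametric representations start at $(a,f(a))$ and end at $(b,f(b))$. At a continuity point $t$ of $f_0$, the segment of $\Gamma_{f_0}$ above $t$ is the single point $(t,f_0(t))$. Any $\rho$ with $u_n(\rho)=t$ then has $u_0(\rho)$ close to $t$, so $r_0(\rho)$ is close to $f_0(t)$ by continuity of $f_0$ at $t$. Hence $r_n(\rho)$, which is a value between $f_n(t-)$ and $f_n(t)$, is close to $f_0(t)$. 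Monotonicity of $f_n$ and a squeeze using nearby points then give $f_n(t)\to f_0(t)$.

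The converse is the main obstacle: building close parametric representations from pointwise convergence on a dense set $D\ni a,b$. First I would show that convergence on $D$ forces $f_n(t)\to f_0(t)$ at every continuity point $t$ of $f_0$. This is the squeeze $f_n(d_1)\le f_n(t)\le f_n(d_2)$ with $d_1<t<d_2$ in $D$ close to $t$, combined with the uniform bounds $f_n(a)\le f_n\le f_n(b)$. Next I would use an $\varepsilon$-grid argument in place of explicit representations. Choose a partition $a=t_0<\dots<t_k=b$ of points in $D$ that are continuity points of $f_0$ (except possibly the endpoints), such that the oscillation of $f_0$ on each $(t_{i-1},t_i)$ is small apart from at most one jump. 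This is possible because only finitely many jumps exceed $\varepsilon$. On each cell both graphs are monotone curves connecting nearly equal corner values, so the $M_1$ oscillation modulus (Whitt's criterion via $w_s$ and uniform convergence on $D$) is small. Invoking Whitt's characterization of $M_1$ convergence as convergence on a dense set plus control of the oscillation function, monotonicity makes the oscillation term vanish automatically. This is where I would rely on the cited reference for the general characterization.

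Finally, suppose $f_0$ is continuous. I take a partition $a=t_0<\dots<t_k=b$ with $f_0(t_i)-f_0(t_{i-1})<\varepsilon$ for each $i$, and with interior points in $D$; this is possible by uniform continuity and density. For $n$ large we have $|f_n(t_i)-f_0(t_i)|<\varepsilon$ at every grid point. For $t\in[t_{i-1},t_i]$, monotonicity gives
\[
f_n(t)-f_0(t)\le f_n(t_i)-f_0(t_{i-1})<2\varepsilon,
\]
and symmetrically $f_0(t)-f_n(t)<2\varepsilon$. This yields uniform convergence, a Pólya/Dini-type argument.
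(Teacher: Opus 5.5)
Your proposal is correct. For the one nontrivial implication it rests on the same source as the paper, which gives no argument of its own and simply cites Whitt, \S 12.5.

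What you add is self-contained, and both added parts are complete:
\begin{itemize}
\item the necessity direction via parametric representations (you can even skip the squeeze: $(u_n,r_n)$ is onto $\Gamma_{f_n}$, so you may choose $\rho$ with $(u_n(\rho),r_n(\rho))=(t,f_n(t))$);
\item the P\'olya-type grid argument giving uniform convergence when $f_0$ is continuous.
\end{itemize}

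For sufficiency, what actually closes the argument is your final sentence. Whitt characterizes $M_1$ convergence by pointwise convergence on a dense set containing $a,b$ together with $\lim_{\delta\to0}\limsup_n w_s(f_n,\delta)=0$. Here $w_s(f_n,\delta)=0$, because $f_n(t_2)$ lies between $f_n(t_1)$ and $f_n(t_3)$ whenever $t_1<t_2<t_3$.

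Once you invoke this criterion, your preliminary step and the $\varepsilon$-grid are superfluous. As written, the grid is only a heuristic. It also asks for partition points in $D$ that are continuity points of $f_0$, and these may not exist: take $D=\mathbb{Q}\cap[a,b]$ and $f_0$ jumping at every rational. You would then have to use the continuity points where you already proved convergence.

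If you want the sufficiency direction without the citation, parametrize completed graphs by $\rho=t+x$:
\begin{itemize}
\item the generalized inverse $u_n$ of the strictly increasing map $t\mapsto t+f_n(t)$ is $1$-Lipschitz;
\item $u_n$ converges pointwise to $u_0$ by the dense-set convergence;
\item hence it converges uniformly after linearly rescaling the $\rho$-intervals, whose endpoints converge since $f_n(a)\to f_0(a)$ and $f_n(b)\to f_0(b)$;
\item the maps $(u_n,\rho-u_n)$ are parametric representations, and they converge uniformly.
\end{itemize}
This is the classical L\'evy-metric argument, and it gives a proof of the whole lemma independent of Whitt.
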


\begin{remk}
For the precise definition of $M_1$-convergence, we refer the reader to \cite{Whitt}. In this paper, we do not directly employ the definition of $M_1$-convergence, 
but only use some of its standard properties in the lemmas below.
\end{remk}

Recall the definition of the generalized inverse function. If $f:[a,b] \to [-\infty, \infty]$ is a non-decreasing function, where $-\infty \leq a < b \leq \infty$, then its generalized inverse $f^{-1}:[-\infty, \infty] \to [a,b]$ is defined by 
\[
f^{-1}(t) := \inf\{s \in [a,b] : f(s) > t\}.
\]
We set $f^{-1}(t) := b$ if the set $\{s \in [a,b] : f(s) > t\}$ is empty.  The set $[-\infty,\infty]$ is considered as  a compactification of $\mbR$.     The essential properties of generalized inverses are summarized in the following lemma.

\begin{lem}\label{lem: inverses} Let $f:[a,b] \to \mbR$ be a  non-decreasing càdlàg function. 
\begin{enumerate}
    \item[(1)] The generalized inverse $f^{-1}$ is  a càdlàg function  with values in $[a,b]$.
    \item[(2)] If $f$ is continuous and strictly increasing on $[a,b]$, then $f^{-1}(x)$ for $x \in [f(a), f(b)]$ coincides with its usual inverse.
    \item[(3)] A non-decreasing function $f$ is continuous on $[a,b] \subset \mbR$ if and only if $f^{-1}$ is strictly increasing on $[f(a), f(b)]$. 
        \item[(4)] A non-decreasing function  $f$  is strictly increasing  on $[a,b] \subset \mbR$ if and only if $f^{-1}$ is continuous on $[f(a), f(b)]$. 
    \item[(5)] Let $f_n:[0,T] \to \mbR$, $n \geq 0$, be a sequence of non-decreasing càdlàg functions. Assume that $t_0\in (0,T) $ is a point of increase of $f_0$, by which we mean that $f_0(t_-) < f_0(t_0) < f_0(t_+)$ whenever $0\leq t_- < t_0 < t_+\leq T$. Suppose that $\lim_{n \to \infty} f_n(t) = f_0(t)$ for all $t$ from a dense set in a neighborhood of $t_0$. Then $\lim_{n \to \infty} f_n^{-1}(z_0) = f_0^{-1}(z_0)$, where $z_0 = f_0(t_0)$.  
    \item[(6)] Let $f_n:[0,\infty) \to [0,\infty)$, $n \geq 0$, be a sequence of non-decreasing and unbounded càdlàg functions. Then
\[
\begin{aligned}
&\lim_{n \to \infty} f_n = f_0
    &&\text{in the $M_1$-topology in $D((0,\infty))$} \\
&\Longleftrightarrow\quad
\lim_{n \to \infty} f_n^{-1} = f_0^{-1}
    &&\text{in the $M_1$-topology on $D((0,\infty))$} \\
&\Longleftrightarrow\quad
\limn f_n(x)=f_0(x)
    &&\text{for all $x$ in a dense subset of $(0,\infty)$} \\
&\Longleftrightarrow\quad
\limn f_n^{-1}(x)=f_0^{-1}(x)
    &&\text{for all $x$ in a dense subset of $(0,\infty)$.}
\end{aligned}
\]
\end{enumerate}
\end{lem}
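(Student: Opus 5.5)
We prove the six items in order. Items (1)--(4) are elementary, (5) is a sandwich argument, and (6) reduces to Lemma \ref{lem: conv monot cont} together with the fact that the graphs of $f$ and $f^{-1}$ are mirror images.

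\textbf{Items (1)--(4).} For (1), monotonicity of $f^{-1}$ is immediate: if $t<t'$, then $\{s: f(s)>t'\}\subset\{s:f(s)>t\}$. For right-continuity we use the identity
\[
\{s: f(s)>t\}=\bigcup_{t'>t}\{s: f(s)>t'\},
\]
so the infimum over the left-hand set equals $\inf_{t'>t} f^{-1}(t')=f^{-1}(t+)$. Values lie in $[a,b]$ by the convention for the empty set. For (2), if $f$ is a continuous bijection onto $[f(a),f(b)]$, then $\{f>t\}=(f_{\mathrm{usual}}^{-1}(t),b]$, whose infimum is $f_{\mathrm{usual}}^{-1}(t)$.

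For (3) and (4) we match flat pieces of one function with jumps of the other.
\begin{itemize}
\item A jump of $f$ at $s_0$, from $f(s_0-)$ to $f(s_0)$, forces $f^{-1}\equiv s_0$ on $[f(s_0-),f(s_0))$. Conversely, if $f^{-1}$ is constant, equal to $s_0$, on a nondegenerate interval $[t_1,t_2]$, then $f(s)\le t_1$ for $s<s_0$ and $f(s_0)\geq t_2$ by right-continuity. This is a jump of $f$ at $s_0$. Hence $f$ is continuous if and only if $f^{-1}$ is strictly increasing, which is (3).
\item Dually, if $f\equiv c$ on $[s_1,s_2]$ with $s_1<s_2$, then $f^{-1}(c-)\le s_1<s_2\le f^{-1}(c)$, a jump of $f^{-1}$. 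Conversely, a jump of $f^{-1}$ at $c$ gives $f\equiv c$ on $[f^{-1}(c-),f^{-1}(c))$. This is (4).
\end{itemize}

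\textbf{Item (5).} Since $t_0$ is a point of increase, $f_0(s)<z_0$ for $s<t_0$ and $f_0(s)>z_0$ for $s>t_0$. Hence $f_0^{-1}(z_0)=t_0$. Fix $\ve>0$ and pick points $t_\pm$ of the dense convergence set with $t_0-\ve<t_-<t_0<t_+<t_0+\ve$. Then $f_0(t_-)<z_0<f_0(t_+)$, so for large $n$ we also have $f_n(t_-)<z_0<f_n(t_+)$. Monotonicity then gives $f_n^{-1}(z_0)\in[t_-,t_+]$, and letting $\ve\to0$ proves (5).

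\textbf{Item (6).} The equivalences between the first and third conditions, and between the second and fourth, follow from Lemma \ref{lem: conv monot cont}, applied on compact subintervals with endpoints in the dense set; the dense sets can be enlarged to contain such endpoints.

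It remains to link the third and fourth conditions, i.e.\ pointwise convergence on dense sets. We use the fact that the completed graph of $f^{-1}$ (vertical segments inserted at jumps) is the reflection of the completed graph of $f$ across the diagonal. Reflection maps a jump of $f$ to a flat piece of $f^{-1}$ and vice versa, exactly as in (3)--(4). $M_1$-convergence is convergence of parametric representations of completed graphs, so swapping the two coordinates of a parametric representation of $f_n$ yields one for $f_n^{-1}$. Applying this in both directions shows that $M_1$-convergence of $f_n$ and of $f_n^{-1}$ are equivalent.

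Alternatively, one can argue directly as in (5). At every continuity point $x$ of $f_0^{-1}$ one sandwiches $f_n^{-1}(x)$ between points of the dense set. The continuity points of $f_0^{-1}$ form a cocountable, hence dense, set. The converse direction is symmetric, since $f=(f^{-1})^{-1}$ up to right-continuous modification for unbounded nondecreasing càdlàg $f$.

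\textbf{Expected obstacle.} The hardest step is (6). One must make sure that unboundedness keeps all inverses finite on $(0,\infty)$. One must also handle the behaviour near the left endpoint $0$, where $f_n^{-1}$ may sit at $0$ on an initial interval; this is why the statement is posed on the open interval $(0,\infty)$ and only requires compact subintervals. Finally, the reflection argument must respect Whitt's precise definition of parametric representations; the direct sandwich argument avoids that last point.
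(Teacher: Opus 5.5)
Your proposal is correct. For (1)--(5) it supplies exactly the ``simple analytic arguments'' that the paper mentions without giving details.

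For (6) you take a genuinely different route. The paper simply invokes Whitt's Theorem~13.6.3. You instead reduce the two $M_1$ statements to pointwise convergence on dense sets via Lemma~\ref{lem: conv monot cont}, and then link $f_n$ and $f_n^{-1}$ by hand. Of your two links, the graph-reflection one is only a sketch: swapping coordinates produces parametric representations of $f_n^{-1}$ over the $n$-dependent intervals $[f_n(a),f_n(b)]$, which still have to be cut down to a fixed interval. The direct one, however, is complete.
\begin{itemize}
\item At a continuity point $y$ of $f_0^{-1}$, with $x=f_0^{-1}(y)$, one has $f_0(x_-)<y<f_0(x_+)$ for all $x_-<x<x_+$; left-continuity of $f_0^{-1}$ at $y$ gives the first inequality. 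This is the same sandwich the paper uses to prove Corollary~\ref{corl:5.1}(2).
\item The converse follows from the exact identity $(f^{-1})^{-1}=f$ for nondecreasing c\`adl\`ag $f\geq0$, applied to $g_n:=f_n^{-1}$. These satisfy the same hypotheses: they are finite because the $f_n$ are unbounded, and unbounded because the $f_n$ are finite.
\end{itemize}
Your route is self-contained and shows precisely where unboundedness and the restriction to $(0,\infty)$ enter. The citation, by contrast, is a one-line appeal to a general theorem.

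Two readings are implicit in what you wrote and should be stated.
\begin{itemize}
\item In (4), continuity of $f^{-1}$ at $f(a)$ must be two-sided. A flat piece of $f$ starting at $a$ produces a jump of $f^{-1}$ at $f(a)$ that is visible only from the left.
\item In (6), $M_1$ convergence in $D((0,\infty))$ must mean convergence in $D([a,b])$ for $a,b$ continuity points of the limit (Whitt's convention). This is because the dense set can be enlarged only by continuity points of $f_0$. If it meant convergence for literally every $[a,b]$, as the remark following the lemma puts it, the first and third conditions would not be equivalent: take $f_n(t)=t+\1_{\{t\geq 1+1/n\}}$ and $[a,b]=[1,2]$.
\end{itemize}
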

All assertions except for the last one follow directly from the definition and simple analytic arguments. The last assertion follows from \cite[Theorem 13.6.3]{Whitt}.

\begin{remk} 
Even though we haven't defined convergence in $M_1$-topology, note that  by definition, the convergence $f_n\to f $ in  $D((0,\infty))$  means convergence $f_n\to f$ in  $D([a,b])$  in $M_1$-topology for   any $[a,b]\subset (0,\infty)$. 
In addition, a sequence $(f_n)$ is $M_1$-convergent to $f_0$ in $D([0,\infty))$ if and only if $(f_n)$ is $M_1$-convergent to $f_0$ in $D([0,T])$ for almost all $T > 0$ with respect to the Lebesgue measure, see \cite[p. 83]{Whitt}.  
\end{remk}
\begin{remk}
The possibility of including $\pm\infty$ into the domain of definition or the range of values does not alter the proofs from \cite{Whitt},   as there exists a strictly increasing homeomorphism  from $[-\infty, \infty]$ onto $[0,1]$. 
\end{remk}

 \begin{corl}\label{corl:5.1}
Assume that a sequence $(f_n):[0,T]\to[-\infty,\infty]$ of
nondecreasing càdlàg functions satisfies
\[
\lim_{n\to\infty} f_n(t)=f_0(t)
\]
for all $t$ in a dense subset of $[0,T]$.

\begin{enumerate}

\item
Assume that $0<a<b<T$, and that $a$ and $b$ are points of increase
of $f_0$. Set
\[
\alpha:=f_0(a), \qquad \beta:=f_0(b).
\]
Then $f_n^{-1}\to f_0^{-1}$ in the $M_1$-topology on
$D([\alpha,\beta])$.

\item
Assume that $f_0^{-1}$ is finite and continuous on an open interval
$J\subset\mbR$. Then we have uniform convergence of the inverses:
\[
\lim_{n\to\infty}
\sup_{t\in[\alpha,\beta]}
|f_n^{-1}(t)-f_0^{-1}(t)|=0
\]
for every compact interval $[\alpha,\beta]\subset J$.

In particular, if $f_0$ is strictly increasing on
$[a,b]\subset[0,T]$, then the above convergence holds for every
\[
[\alpha,\beta]\subset (f_0(a),f_0(b-)).
\]

\item
Assume that $f_0$ is strictly increasing on $[0,T]$ and
$f_n(0)=0$ for all $n\geq0$. Then we have uniform convergence of
the inverses:
\[
\lim_{n\to\infty}
\sup_{t\in[0,\beta]}
|f_n^{-1}(t)-f_0^{-1}(t)|=0
\]
for any $0<\beta<f_0(T-)$.

\end{enumerate}
\end{corl}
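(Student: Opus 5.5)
We reduce all three assertions to one elementary \emph{sandwich} argument giving pointwise convergence of the generalized inverses. We then upgrade pointwise convergence to $M_1$ or uniform convergence with Lemma~\ref{lem: conv monot cont}, applied to the non-decreasing càdlàg functions $f_n^{-1}$ (Lemma~\ref{lem: inverses}(1)). Throughout, by the remark preceding the corollary, we compose with a strictly increasing homeomorphism of $[-\infty,\infty]$ onto $[0,1]$. This lets us treat infinite values as ordinary ones.

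\emph{Sandwich step.} Let $D\subset[0,T]$ be the dense set on which $f_n\to f_0$. Fix a level $t$ such that $f_0$ is not constant on any nondegenerate interval at level $t$; equivalently, with $u:=f_0^{-1}(t)$,
\[
f_0(s)<t \ \text{for } s<u, \qquad f_0(s)>t \ \text{for } s>u .
\]
All but countably many levels have this property. For $s<u<s'$ with $s,s'\in D$ we get $f_n(s)<t<f_n(s')$ for large $n$. Hence $s\le f_n^{-1}(t)\le s'$ for large $n$, and letting $s\uparrow u$, $s'\downarrow u$ gives $f_n^{-1}(t)\to f_0^{-1}(t)$. If $u$ coincides with an endpoint of $[0,T]$, only the one-sided half of the argument is needed. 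This is where the conventions $f^{-1}(t)=b$ for empty sets and the extended values have to be watched.

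\emph{Part 1.} Since $a$ and $b$ are points of increase of $f_0$, Lemma~\ref{lem: inverses}(5) gives $f_n^{-1}(\alpha)\to f_0^{-1}(\alpha)$ and $f_n^{-1}(\beta)\to f_0^{-1}(\beta)$. The sandwich step gives convergence at all but countably many $t\in(\alpha,\beta)$, hence on a dense subset of $[\alpha,\beta]$ that contains both endpoints. Lemma~\ref{lem: conv monot cont} then yields $M_1$-convergence on $D([\alpha,\beta])$.

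\emph{Part 2.} By Lemma~\ref{lem: inverses}(3)--(4), continuity of $f_0^{-1}$ on $J$ means exactly that $f_0$ has no flat pieces at levels in $J$. So the sandwich step applies at \emph{every} $t\in J$, in particular at the endpoints of $[\alpha,\beta]$. Since the limit $f_0^{-1}$ is continuous on $[\alpha,\beta]$, Lemma~\ref{lem: conv monot cont} upgrades this to uniform convergence. For the particular case, if $f_0$ is strictly increasing on $[a,b]$, then Lemma~\ref{lem: inverses}(4) shows that $f_0^{-1}$ is continuous and finite on $(f_0(a),f_0(b-))$, and we take $J=(f_0(a),f_0(b-))$.

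\emph{Part 3.} The only new point is the left endpoint $t=0$, where $f_0^{-1}(0)=0$ because $f_0$ is strictly increasing and $f_0(0)=0$. Since $f_n(0)=0$, the inverses are nonnegative, so the lower bound $f_n^{-1}(0)\ge 0$ is automatic. For the upper bound, for every $s\in D$ with $s>0$ we have $f_0(s)>0$, so $f_n(s)>0$ for large $n$, and hence $f_n^{-1}(0)\le s$. Together with the sandwich step at every $t\in(0,\beta]$, we obtain pointwise convergence on $[0,\beta]$ to $f_0^{-1}$. This limit is continuous on $[0,f_0(T-))$ by Lemma~\ref{lem: inverses}(4), so Lemma~\ref{lem: conv monot cont} gives uniform convergence on $[0,\beta]$.

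The main obstacle is the bookkeeping at the boundary: levels where $f_0^{-1}$ hits $0$ or $T$, and possibly infinite values of $f_n$. I would handle these first via the homeomorphism remark and the endpoint conventions, and only then run the sandwich argument.
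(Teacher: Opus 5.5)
Your proposal is correct and follows essentially the paper's route. Your sandwich step is the paper's argument for part (2): at a level where $f_0$ has no flat piece, choose points of the dense set on either side of $f_0^{-1}(t)$. Part (1) uses Lemma~\ref{lem: inverses}(5) at the endpoints, pointwise convergence at continuity points of $f_0^{-1}$, and then Lemma~\ref{lem: conv monot cont}, exactly as the paper does. The only cosmetic difference is in part (3): you treat the level $0$ by a one-sided sandwich, whereas the paper notes that $f_0^{-1}$ is continuous on an open interval containing $[0,\beta]$ and applies part (2).
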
 
\begin{proof}
For assertion (1), Lemma \ref{lem: inverses}(5), together with the
preceding remark, yields the convergence of the inverse functions at
the endpoints $\alpha$ and $\beta$. 
At every continuity point $y\in(\alpha,\beta)$ of $f_0^{-1}$,
the convergence
\[
f_n^{-1}(y)\to f_0^{-1}(y)
\]
follows directly from the definition of the generalized inverse and
the convergence of $f_n$ on a dense set. Since the continuity points
of the nondecreasing function $f_0^{-1}$ form a dense set, Lemma
\ref{lem: conv monot cont} yields the $M_1$-convergence in (1).

To prove assertion (2), fix $y\in J$ and set $x=f_0^{-1}(y)$. By the continuity
of $f_0^{-1}$ at $y$, one can choose points $x_-<x<x_+$ from the
dense set of convergence, arbitrarily close to $x$, such that
\[
f_0(x_-)<y<f_0(x_+),
\]
with the obvious one-sided modification if $x=0$ or $x=T$.
Hence, for all sufficiently large $n$,
\[
f_n(x_-)<y<f_n(x_+),
\]
and therefore
\[
x_-\leq f_n^{-1}(y)\leq x_+.
\]
Thus $f_n^{-1}(y)\to f_0^{-1}(y)$. Lemma
\ref{lem: conv monot cont} then yields uniform convergence on every
compact subinterval of $J$.

Finally, (3) follows from (2). Indeed, $f_n(0)=0$ and monotonicity
imply $f_0(0)\geq0$. Since $f_0$ is strictly increasing,
$f_0^{-1}$ is finite and continuous on an open interval containing
$[0,\beta]$ for every $\beta<f_0(T-)$.
\end{proof}

\begin{lem}\label{lem:conv compositions}
Let $(f_n)_{n \geq 0}:[c, d] \to \mbR$ and $(g_n)_{n \geq 0}:[a,b] \to [c,d]$ be sequences of functions such that $\lim_{n \to \infty} f_n = f_0$ and $\lim_{n \to \infty} g_n = g_0$  uniformly on the corresponding sets. Assume that  $f_0$ and $g_0$ are continuous. Then we have the uniform convergence of compositions $\lim_{n \to \infty} f_n \circ g_n = f_0 \circ g_0$.
\end{lem}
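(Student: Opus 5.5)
The approach is a standard $\ve/3$ argument. Fix $\ve>0$ and $T\ge 0$ as in the statement. By continuity of $f_0$ on the compact interval $[c,d]$, $f_0$ is uniformly continuous, so there is $\delta>0$ with $|f_0(u)-f_0(v)|<\ve/2$ whenever $u,v\in[c,d]$ and $|u-v|<\delta$.

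Next we split
\[
|f_n(g_n(t))-f_0(g_0(t))|\le |f_n(g_n(t))-f_0(g_n(t))|+|f_0(g_n(t))-f_0(g_0(t))|.
\]
The first term is bounded by $\sup_{u\in[c,d]}|f_n(u)-f_0(u)|$, which is less than $\ve/2$ for large $n$ by uniform convergence of $f_n$. This step uses that $g_n$ takes values in $[c,d]$, the domain of $f_n$.

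For the second term, uniform convergence of $g_n$ gives $\sup_{t\in[a,b]}|g_n(t)-g_0(t)|<\delta$ for large $n$. By the choice of $\delta$, the second term is then less than $\ve/2$ uniformly in $t$. Summing the two bounds gives the uniform convergence of $f_n\circ g_n$ to $f_0\circ g_0$.

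Continuity of $g_0$ is not needed in this argument; only uniform continuity of $f_0$ matters. There is no real obstacle. The one point to check is that $g_n$ and $g_0$ map into the common compact set $[c,d]$, so that uniform continuity of $f_0$ applies on it. If the intervals are extended to include $\pm\infty$, as in the paper's conventions, the same argument works after a homeomorphism $[-\infty,\infty]\to[0,1]$.
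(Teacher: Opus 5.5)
Your argument is correct and is the standard one: the paper lists this lemma among well-known appendix facts and gives no proof, and uniform continuity of $f_0$ on the compact interval $[c,d]$ combined with your two-term split is exactly what is needed. The blemishes are cosmetic: there is no $T$ in the statement, and you announce an $\varepsilon/3$ argument but correctly use $\varepsilon/2$. Your remark that continuity of $g_0$ is superfluous is also right for compact $[c,d]$; it only matters in the locally uniform form in which the paper applies the lemma (e.g.\ $s_n^{-1}\to s_0^{-1}$ locally uniformly on $\mathbb{R}$), where the bounded range of the continuous $g_0$ confines all $g_n$, $n$ large, to a common compact interval.
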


For any function $f$, denote   $\sigma_x(f) := \inf\{t \geq 0 : f(t) = x\} \in [0, \infty]$.

\begin{lem}\label{lem:hitting_limit}
Let $(f_n)_{n \geq 0}: [0,\infty) \to \mbR$ be a sequence of continuous functions converging locally uniformly to $f_0$, and let $(x_n)_{n \geq 0} \subset \mbR$ be a sequence of numbers such that $\lim_{n \to \infty} x_n = x_0$. Assume that $\sigma_{x_0}(f_0) = \infty$ or that 
\[
\begin{aligned}
\forall \ve > 0, \ \exists t_\pm \in
(\sigma_{x_0}(f_0), \sigma_{x_0}(f_0) + \ve)
\quad \text{such that } \\
f_0(t_+) > x_0 \text{ and } f_0(t_-) < x_0.
\end{aligned}
\]
Then $\lim_{n \to \infty} \sigma_{x_n}(f_n) = \sigma_{x_0}(f_0)$.
\end{lem}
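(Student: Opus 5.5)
The plan is a direct two-sided argument: a compactness bound for $\liminf$ and the intermediate value theorem for $\limsup$. Throughout, write $\sigma_n:=\sigma_{x_n}(f_n)$ for $n\geq 0$, so $\sigma_0=\sigma_{x_0}(f_0)$. I will show separately that
\[
\liminf_{n\to\infty}\sigma_n\geq\sigma_0
\quad\text{and, if }\sigma_0<\infty,\quad
\limsup_{n\to\infty}\sigma_n\leq\sigma_0 .
\]
Together these give the claim, including the case $\sigma_0=\infty$, where only the first inequality is needed.

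\emph{Lower bound.} If $\sigma_0=0$ there is nothing to prove. Otherwise fix any finite $T$ with $0<T<\sigma_0$. By the definition of $\sigma_0$ as an infimum, $f_0(t)\neq x_0$ for all $t\in[0,T]$. Since $f_0$ is continuous and $[0,T]$ is compact, there is $c>0$ with $|f_0(t)-x_0|\geq c$ on $[0,T]$. Uniform convergence $f_n\to f_0$ on $[0,T]$ and $x_n\to x_0$ then give, for all sufficiently large $n$,
\[
|f_n(t)-x_n|\geq c/2 \quad\text{for all } t\in[0,T].
\]
Hence $\sigma_n>T$ for all large $n$, so $\liminf_n\sigma_n\geq T$. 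Letting $T\uparrow\sigma_0$ yields $\liminf_n\sigma_n\geq\sigma_0$, whether $\sigma_0$ is finite or infinite.

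\emph{Upper bound.} Assume $\sigma_0<\infty$ and fix $\ve>0$. By hypothesis there are $t_\pm\in(\sigma_0,\sigma_0+\ve)$ with $f_0(t_+)>x_0$ and $f_0(t_-)<x_0$. Pointwise convergence at these two fixed times, together with $x_n\to x_0$, gives $f_n(t_+)>x_n$ and $f_n(t_-)<x_n$ for all large $n$. Since $f_n$ is continuous, the intermediate value theorem produces a time between $t_-$ and $t_+$ (in whichever order they lie) at which $f_n=x_n$. Therefore $\sigma_n<\sigma_0+\ve$ for all large $n$, and since $\ve$ is arbitrary, $\limsup_n\sigma_n\leq\sigma_0$.

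The only genuinely delicate point is the upper bound. Mere touching of the level $x_0$ by $f_0$ at $\sigma_0$ would not be stable under perturbation: $f_n$ could stay strictly on one side of $x_n$ near $\sigma_0$. The two-sided crossing hypothesis is exactly what rules this out, so I expect no further obstacle once it is invoked as above.
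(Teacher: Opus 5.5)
Your proof is correct and complete. The paper gives no proof of this lemma (it is listed in the Appendix among ``well-known analytical facts''), so there is no competing argument to compare with. What you wrote is the standard argument: a uniform separation $|f_0-x_0|\geq c$ on $[0,T]$ combined with local uniform convergence gives $\liminf_n\sigma_{x_n}(f_n)\geq\sigma_{x_0}(f_0)$, and the intermediate value theorem between the two crossing times $t_\pm$ gives the $\limsup$ bound.

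Two minor remarks. First, the strict inequality $\sigma_{x_n}(f_n)>T$ uses that $\{t: f_n(t)=x_n\}$ is closed, so the infimum is attained when it is finite; the weaker bound $\geq T$ already suffices. Second, your upper bound uses only pointwise convergence at $t_\pm$; local uniform convergence is needed only for the lower bound.
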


\subsection{Vague convergence and its properties}

Let $m_n$, $n \geq 0$, be  locally finite measures on   open intervals $I_n = (a_n, b_n)$, where $-\infty \leq a_n < b_n \leq \infty$.    Assume that the interval $I = (a,b)$ with $-\infty \leq a < b \leq \infty$ is contained in $I_0$ and any compact $K\subset I$ is contained in $I_n$ for all sufficiently large $n$. We denote by $C_0(I)$ the space of continuous functions $f:I \to \mbR$ with compact support $\mathrm{supp}(f) \subset I$. 

\begin{defn}
The sequence of measures $(m_n)$ converges vaguely to a measure $m_0$ on $I$ if $\lim_{n \to \infty} \int_I f \, \di m_n = \int_I f \, \di m_0$ for any $f \in C_0(I)$.
\end{defn}

Let $x_0 \in I$ be an arbitrary point. Introduce the distribution-like functions:
\[
F_n(x) := \begin{cases}
    m_n((x_0, x]), & x \geq x_0, \ x \in I, \\
    -m_n((x, x_0]), & x < x_0, \ x \in I.
\end{cases}
\]
It is well known that $m_n \to m_0$ vaguely if and only if $\lim_{n \to \infty} F_n(x) = F_0(x)$ for all $x \in I$ at which $F_0$ is continuous.

\begin{remk}\label{remk: vague}
The vague convergence $m_n \to m_0$ holds if $\lim_{n \to \infty} F_n(x) = F_0(x)$ for all $x$ in a dense subset $J \subset I$ where $F_0$ is continuous. Indeed, let  $x \in I \setminus J$ be a point of continuity of $F_0. $ Choose sequences $x_m^- < x < x_m^+$ in $J$ such that $\lim_{m \to \infty} F_0(x_m^\pm) = F_0(x)$. Then 
\[
\begin{aligned}
F_0(x_m^-)
&= \lim_{n \to \infty} F_n(x_m^-)
\leq \liminf_{n \to \infty} F_n(x) \\
&\leq \limsup_{n \to \infty} F_n(x)
\leq \lim_{n \to \infty} F_n(x_m^+)
= F_0(x_m^+)
\end{aligned}
\]
for any $m \geq 1$. Taking the limit as $m \to \infty$ yields $\lim_{n \to \infty} F_n(x) = F_0(x)$.
\end{remk}

\begin{lem}\label{lem:conv of integrals}
Assume that a sequence of locally finite measures $(m_n)$ converges vaguely on $I$ to a measure $m_0$. Suppose that a sequence of continuous functions $(f_n)$ converges uniformly to $f_0$, and all functions $f_n, n\geq 0,$ vanish outside of a common compact set $K \subset I$. Then $\lim_{n \to \infty} \int_I f_n \, \di m_n = \int_I f_0 \, \di m_0$.
\end{lem}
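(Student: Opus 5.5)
The plan is to prove Lemma~\ref{lem:conv of integrals} by reducing it to the definition of vague convergence, applied to a single fixed test function, and to control the error coming from replacing $f_n$ by $f_0$ with a uniform mass bound on $K$. Concretely, I would split
\[
\int_I f_n\,\di m_n-\int_I f_0\,\di m_0=\int_I (f_n-f_0)\,\di m_n+\Bigl(\int_I f_0\,\di m_n-\int_I f_0\,\di m_0\Bigr).
\]
Since $f_0$ is continuous and vanishes outside $K$, it is a continuous function with compact support in $I$, i.e.\ $f_0\in C_0(I)$. The second term therefore tends to $0$ directly by the definition of vague convergence.

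For the first term I need $\sup_n m_n(K)<\infty$ for $n$ large. Since $K$ is a compact subset of the open interval $I$, I would choose a continuous function $\varphi\in C_0(I)$ with $0\le\varphi\le1$ and $\varphi\equiv1$ on $K$. For instance, take $\varphi$ piecewise linear, equal to $1$ on $[\min K,\max K]$ and supported in a slightly larger compact interval $[c,d]\subset I$. By the standing assumption, $[c,d]\subset I_n$ for all sufficiently large $n$, so every integral below is well defined. Then
\[
m_n(K)\le\int_I\varphi\,\di m_n\longrightarrow\int_I\varphi\,\di m_0\le m_0([c,d])<\infty,
\]
using local finiteness of $m_0$. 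Hence $M:=\sup_{n\ge n_0}m_n(K)<\infty$. Because all the $f_n$ vanish outside $K$,
\[
\Bigl|\int_I (f_n-f_0)\,\di m_n\Bigr|\le \sup_{x\in K}|f_n(x)-f_0(x)|\;m_n(K)\le M\,\|f_n-f_0\|_\infty\to0
\]
by the uniform convergence.

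Combining the two estimates gives the claim. The only step needing care is this uniform mass bound, and specifically the choice of the cutoff $\varphi$ with compact support inside $I$. This is where openness of $I$ and the assumption that compacts of $I$ eventually lie in $I_n$ are used. Once $\varphi$ is in place, the rest is routine.
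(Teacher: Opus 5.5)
Your proposal is correct and follows essentially the same route as the paper: the same split into $\int_I (f_n-f_0)\,\di m_n$ plus $\int_I f_0\,\di m_n-\int_I f_0\,\di m_0$, with the first term controlled by $\sup_K|f_n-f_0|$ times a uniform bound on $m_n(K)$. The one difference is that you justify that mass bound with the cutoff $\varphi$ and state it for $n\ge n_0$, whereas the paper simply asserts $\sup_{n\geq0}m_n(K)<\infty$ as a consequence of vague convergence; your version is the more careful of the two.
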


\begin{proof}
It follows from vague convergence that $\sup_{n \geq 0} m_n(K) < \infty$. Thus,
\[
\begin{aligned}
\limsup_{n \to \infty} \left| \int_I f_n \, \di m_n - \int_I f_0 \, \di m_0 \right|
&\leq \lim_{n \to \infty} \left( \sup_{x \in K} |f_n(x) - f_0(x)| \sup_{n \geq 0} m_n(K) \right) \\
&\quad + \lim_{n \to \infty} \left| \int_I f_0 \, \di m_n - \int_I f_0 \, \di m_0 \right| \\
&= 0.
\end{aligned}
\]
\end{proof}

\begin{corl}
Assume that a sequence of locally finite measures $(m_n)$ converges vaguely on $I$ to a measure $m_0$. Suppose that a sequence $(s_n)$ of nondecreasing continuous functions converges pointwise to a strictly increasing continuous function $s_0$ on an interval $I \subset I_0$. Set $J = s_0(I)$. Then for every $f \in C_0(J)$, we have
\[
\lim_{n \to \infty} \int_I f(s_n) \, \di m_n = \int_I f(s_0) \, \di m_0,
\]
which implies that the push-forward measures $m_n \circ s_n^{-1}$ converge vaguely to $m_0 \circ s_0^{-1}$ on $J$.
\end{corl}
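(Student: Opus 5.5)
The statement to prove is the final Corollary of the Appendix: for $f\in C_0(J)$ we have $\int_I f(s_n)\,\di m_n\to\int_I f(s_0)\,\di m_0$, and hence $m_n\circ s_n^{-1}\to m_0\circ s_0^{-1}$ vaguely on $J=s_0(I)$. The plan is to reduce this to Lemma \ref{lem:conv of integrals} with $f_n:=f\circ s_n$ and $f_0:=f\circ s_0$. That lemma needs two things: uniform convergence $f_n\to f_0$, and a common compact set $K\subset I$ outside of which all $f_n$ vanish.

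\emph{Step 1 (localization).} Let $\mathrm{supp}(f)\subset[c,d]\subset J$ with $c<d$. Since $s_0$ is a strictly increasing continuous bijection of $I$ onto $J$, we can pick $c'<c$ and $d'>d$ in $J$. Set $u:=s_0^{-1}(c')$ and $v:=s_0^{-1}(d')$, so that $u<v$ lie in $I$.

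By pointwise convergence, $s_n(u)\to c'<c$ and $s_n(v)\to d'>d$. Hence, for all large $n$, $s_n(u)<c$ and $s_n(v)>d$. Monotonicity of $s_n$ then gives $s_n(x)<c$ for $x\le u$ and $s_n(x)>d$ for $x\ge v$. Consequently $f\circ s_n$ vanishes outside $K:=[u,v]$. The same holds for $f\circ s_0$.

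\emph{Step 2 (uniform convergence).} On $K$, the functions $s_n$ are nondecreasing and continuous, and they converge pointwise to the continuous function $s_0$. By the continuous case of Lemma \ref{lem: conv monot cont} (equivalently, a Dini/P\'olya-type argument for monotone functions with a continuous limit), the convergence $s_n\to s_0$ is uniform on $K$.

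Next, $f$ is uniformly continuous on $\mbR$. Applying Lemma \ref{lem:conv compositions} with constant $f_n\equiv f$, we get $f\circ s_n\to f\circ s_0$ uniformly on $K$. Since all these functions vanish off $K$, the convergence is in fact uniform on $I$.

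\emph{Step 3 (conclusion).} Lemma \ref{lem:conv of integrals} now yields $\int_I f(s_n)\,\di m_n\to\int_I f(s_0)\,\di m_0$.

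For the vague convergence, rewrite both sides via the change of variables for push-forward measures:
\[
\int_I f(s_n)\,\di m_n=\int f\,\di(m_n\circ s_n^{-1}),\qquad \int_I f(s_0)\,\di m_0=\int_J f\,\di(m_0\circ s_0^{-1}).
\]
The limit measure is supported in $J$. The prelimit integrals only involve $f$ on $[c,d]$, which is contained in $s_n(K)$ for large $n$. So the convergence above is exactly vague convergence on $J$ in the sense of the Definition.

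The main point requiring care is Step 1. If it fails, the compositions $f\circ s_n$ may be supported near the endpoints of $I$, where the $m_n$ have no uniform control. This is ruled out by combining the monotonicity of $s_n$ with the strict increase of $s_0$. Everything else is a direct application of the earlier lemmas.
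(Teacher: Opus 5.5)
Your proposal is correct and follows the paper's own argument. The paper likewise notes that the supports of $f\circ s_n$ lie in a common compact $K\subset I$ for all large $n$, upgrades pointwise to uniform convergence via Lemma \ref{lem: conv monot cont}, and concludes with Lemma \ref{lem:conv of integrals}; your Step 1 spells out the localization that the paper only asserts.
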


The proof follows directly from Lemmas \ref{lem: conv monot cont} and \ref{lem:conv of integrals} by noting that there exists a compact set $K \subset I$ such that $\mathrm{supp}(f \circ s_n) \subset K$ for all sufficiently large $n$.

\begin{corl}\label{corl:integrals} 
Let $(m_n)$ be a sequence of measures converging vaguely to $m_0$ on $\mbR$,
and let $(y_n)$ be a sequence of real numbers such that
$\lim_{n \to \infty} y_n = 0$.
Let $l=l(t,y):[0,\infty)\times\mbR\to\mbR$ be a non-negative continuous
function such that $t\mapsto l(t,y)$ is nondecreasing for every $y\in\mbR$.
Assume that for any $T>0$, there exists an interval $[a_T,b_T]$ such that
$l(t,y)=0$ if $t\in[0,T]$ and $y\notin[a_T,b_T]$.
Set
\[
A_n(t):=\int_{\mbR} l(t,x+y_n)\,m_n(\di x),\qquad t\geq0.
\]
Then $A_n$ are continuous functions and $A_n\to A_0$ locally uniformly.
\end{corl}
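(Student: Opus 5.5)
We first show that $A_n$ is continuous. Fix $T>0$ and the interval $[a_T,b_T]$ from the hypothesis. Since $y_n\to0$, we may assume $|y_n|\le1$ for all $n$. Then for $t\in[0,T]$ the function $x\mapsto l(t,x+y_n)$ vanishes outside the fixed compact set $K_T:=[a_T-1,b_T+1]$. Vague convergence on $\mbR$ gives $\sup_{n\ge0}m_n(K_T)<\infty$. In particular each $m_n(K_T)$ is finite, which is all we need for $A_n$ itself. Joint continuity of $l$ on the compact set $[0,T]\times K_T$ makes it uniformly continuous there. Hence $t\mapsto l(t,\cdot+y_n)$ is continuous in $t$, uniformly in $x\in K_T$. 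Bounding $|A_n(t)-A_n(s)|$ by $m_n(K_T)\sup_{x\in K_T}|l(t,x+y_n)-l(s,x+y_n)|$ gives continuity of $A_n$ on $[0,T]$, and $T$ is arbitrary.

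Next we prove pointwise convergence. Fix $t\in[0,T]$ and set $f_n(x):=l(t,x+y_n)$ and $f_0(x):=l(t,x)$. These functions are continuous, vanish outside the common compact $K_T$, and $f_n\to f_0$ uniformly on $\mbR$. This uniformity follows from the uniform continuity of $l$ on $[0,T]\times K_T$ together with $y_n\to0$. Outside $K_T$ every $f_n$ and $f_0$ is zero; we adjust $K_T$ slightly if needed so that the shifted supports stay inside it. Lemma~\ref{lem:conv of integrals}, applied with $I=\mbR$, then gives $A_n(t)\to A_0(t)$ for every $t\ge0$.

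It remains to upgrade this to locally uniform convergence. Each $A_n$ is nondecreasing, because $t\mapsto l(t,y)$ is nondecreasing and $m_n\ge0$. Each $A_n$ is continuous by the first step, so in particular càdlàg, and the limit $A_0$ is continuous. Lemma~\ref{lem: conv monot cont}, applied on $[0,T]$ with convergence at every point (hence on a dense set including the endpoints), gives uniform convergence on $[0,T]$. Alternatively, one can argue directly: $\sup_{t\le T}|A_n(t)-A_0(t)|$ is at most $\sup_{n}m_n(K_T)\cdot\sup_{t\le T,x}|l(t,x+y_n)-l(t,x)|$ plus $\sup_{t\le T}|\int l(t,x)\,(m_n-m_0)(\di x)|$. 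The second term tends to zero because the family $\{l(t,\cdot)\}_{t\le T}$ is equicontinuous with common support, and one can use a finite $\delta$-net in $t$. The monotone-function route is shorter, so we use it.

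No step is a real obstacle. The only point needing care is the uniform bound $\sup_n m_n(K_T)<\infty$, which follows from vague convergence by testing against a continuous compactly supported function that is at least $\1_{K_T}$, together with choosing a single compact set that contains all the shifted supports.
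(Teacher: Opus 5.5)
Your proof is correct and takes the same route as the paper, which simply cites Lemma~\ref{lem:conv of integrals} for the pointwise convergence $A_n(t)\to A_0(t)$ and Lemma~\ref{lem: conv monot cont} for the upgrade to uniform convergence on $[0,T]$, using that the $A_n$ are nondecreasing and $A_0$ is continuous. Your explicit checks of the continuity of each $A_n$ and of the bound $\sup_n m_n(K_T)<\infty$ fill in details the paper leaves implicit; the only cosmetic fix is to take uniform continuity of $l$ on a slightly larger compact such as $[0,T]\times[a_T-2,b_T+2]$, since $x+y_n$ may leave $K_T$, which you already anticipate.
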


The proof follows from Lemmas \ref{lem: conv monot cont} and \ref{lem:conv of integrals}.

\section*{Acknowledgements}

The authors are  grateful to Adam Bobrowski for helpful discussions.
The authors were partially supported by the Swiss National Science
Foundation, grants No. IZRIZ0\_226875, No. 200020\_214819, No. 10009199,
No. 200020\_192129.


\end{document}